\title[Finite element convergence for the Joule heating problem]{Finite element convergence for the time-dependent Joule heating problem with mixed boundary conditions}
\author[Max Jensen]{Max Jensen\textsuperscript{1}}
\author[Axel M{\aa}lqvist]{Axel M{\aa}lqvist\textsuperscript{2}} 
\author[Anna Persson]{Anna Persson\textsuperscript{2}}
\newtheorem{definition}{Definition}[section]
\newtheorem{thm}[definition]{Theorem}
\newtheorem{lemma}[definition]{Lemma}
\newtheorem{corollary}[definition]{Corollary}
\theoremstyle{remark}
\newtheorem{remark}[definition]{Remark}
\numberwithin{equation}{section}
\DeclareMathOperator*{\esssup}{ess\,sup}
\DeclareMathOperator{\supp}{supp}
\DeclareMathOperator{\meas}{meas}
\newcommand{\R}{\mathbb{R}}
\newcommand{\N}{\mathbb{N}}
\newcommand{\sigmaup}{\sigma^\circ}
\newcommand{\sigmalow}{\sigma_\circ}
\newcommand{\intd}{\,\mathrm{d}}
\newcommand{\vv}{y}
\newcommand{\M}{\mathcal{M}}
\definecolor{dred}{RGB}{180,90,90}
\definecolor{dgreen}{RGB}{70,140,70}
\definecolor{dblue}{RGB}{100,100,180}
\begin{document}

\begin{abstract}
We prove strong convergence for a large class of finite element methods for the time-dependent Joule heating problem in three spatial dimensions with mixed boundary conditions on Lipschitz domains. We consider conforming subspaces for the spatial discretization and the backward Euler scheme for the temporal discretization. Furthermore, we prove uniqueness and higher regularity of the solution on creased domains and additional regularity in the interior of the domain. Due to a variational formulation with a cut-off functional the convergence analysis does not require a discrete maximum principle, permitting approximation spaces suitable for adaptive mesh refinement, responding to the the difference in regularity within the domain.	
\end{abstract}
\keywords{Joule heating problem, Thermistor, Finite element convergence, Nonsmooth domains, Mixed boundary conditions, Regularity.}
\subjclass{65M60, 65M12, 35K61}
\maketitle

\footnotetext[1]{Department of Mathematics, University of Sussex, Brighton BN1 9QH, United Kingdom.}
\footnotetext[2]{Department of Mathematical Sciences, Chalmers University of Technology and University of Gothenburg, SE-412 96 Gothenburg, Sweden.}

\section{Introduction}
The time-dependent Joule heating problem is a coupled non-linear elliptic-parabolic system of the form
\begin{equation}\label{JH}
\dot u - \Delta u = \sigma(u)|\nabla \varphi|^2,\quad
\nabla \cdot \sigma(u) \nabla \varphi = 0,
\end{equation}
where $u$ denotes the temperature and $\varphi$ the electric potential. It models the heat flow generated when an electric current is passed through a conductor. In applications the electric potential is typically only applied to smaller parts of the boundary, for instance through electric pads. To model such problems properly mixed boundary conditions are needed, see, e.g.~\cite{Henneken06}.

The Joule heating problem has been studied both in a theoretical context \cite{Cimatti92, Antontsev94, Yuan94, Meinlschmidt17}, focusing on the well-posedness of \eqref{JH}, and from a numerical point of view \cite{Elliott95, Akrivis05, Gao14, Li14}, focusing on convergence (with rate) of numerical solutions to \eqref{JH}. There are also several works on the stationary version of the problem, see, for instance \cite{Howison93, Holst10, Jensen13} and references therein.

The main issue with the system \eqref{JH} is the low regularity of the source term $\sigma(u)|\nabla \varphi|^2$. In one and two dimensions this does not lead to a problem. However, in three dimensions this term is not in $H^{-1}$ and the problem does not fit into the classical variational framework for PDEs. In \cite{Antontsev94} this issue is resolved by rewriting the source term using the equation for $\varphi$ (see also \cite{Howison93} for the stationary case). With this formulation existence of a solution in $L_2(H^1)$ is proved. However, to derive convergence for finite element approximations additional regularity of the solution is usually required, see \cite{Elliott95, Akrivis05}. Typically, sufficient regularity in three dimensions cannot be proved, but needs to be assumed. To the authors' knowledge, there is no numerical analysis of this problem under more realistic assumptions on the domain (Lipschitz in three spatial dimensions) and the boundary conditions (mixed).

The purpose of this paper is to prove the strong convergence of finite element approximations of \eqref{JH} on Lipschitz domains in three spatial dimensions with mixed boundary conditions. A challenge is to avoid the need for a discrete maximum principle and the associated restrictive mesh conditions \cite{Holst10} because a direct energy argument only delivers $L^1$-control on the critical $|\nabla \varphi|^2$ term in \eqref{JH}. In our analysis this is achieved by introducing a variational formulation with a cut-off functional, extending \cite{Jensen13}. The analysis presented in this paper covers finite element methods of any order that are conforming in space and piecewise constant in time, satisfying a backward Euler scheme. The choice of approximation spaces only needs to ensure the stability of the $L^2$ projection in the $H^1$-norm, which holds for a large class of non-uniform meshes \cite{Bank14}.

Having arrived at only mild mesh conditions, we find the Joule heating problem with mixed boundary conditions well suited for adaptive mesh refinement. Indeed, starting from the assumption of creased domains \cite{Mitrea07}, we prove uniqueness and additional regularity of the solution. This result combines the regularity for the Poisson equation on creased domains in \cite{Mitrea07} with the results for parabolic systems in \cite{Hieber08}. The additional regularity we obtain for $\varphi$, namely $\varphi \in L_{2q/(q-3)}(W^1_q)$ for some $q>3$, is in line with the sufficient condition for uniqueness established in~\cite{Antontsev94}. Importantly, we can show higher regularity, in some cases $C^\infty$, in the interior of the domain. To exploit the difference in regularity within the domain we equip the Joule heating problem with a goal functional to examine duality-based additive mesh refinement in the numerical experiments.

The paper is outlined as follows: In Section 2 we formulate the problem of interest and introduce some notation. Section 3 is devoted to the analysis of semi-discrete methods and Section 4 to fully discrete methods. In Section 5 we prove additional regularity and uniqueness of the solution. In Section 6 we present some numerical examples that confirm the convergence results and investigate adaptive mesh refinements.

\section{Variational formulations and weak solutions}
In this section we introduce two variational formulations, one ``classical'', see \eqref{classicalweak} below, and one based on a cut-off functional, see \eqref{weak} below. We prove that these two are equivalent, that is, that they have the same set of solutions. The latter formulation is preferable when working with finite element discretizations of the problem, since we avoid using a discrete maximum principle, see Section~\ref{sec_semidiscrete} and Section~\ref{sec_discrete}.

\subsection{Problem formulation and notation}
Let $D_t$ denote the time derivative $\frac{\partial }{\partial t}$ and $\Omega\subseteq \R^3$ be a domain describing the body of a conductor. Let $u\colon\Omega \times [0,T] \rightarrow \R$ denote the temperature inside the conductor, $\varphi\colon \Omega\times [0,T] \rightarrow \R$ the electric potential, and $\sigma\colon \R \rightarrow \R_{+}$ the electric conductivity. Furthermore, we use $\Gamma^u_D$ and $\Gamma^u_N$ to denote the Dirichlet and Neumann boundary for $u$ and $\overline{\Gamma^u_D} \cup \overline{\Gamma^u_N} = \partial \Omega$. Analogously, we define $\Gamma^\varphi_D$ and $\Gamma^\varphi_N$ for $\varphi$. With this notation, the time-dependent Joule heating problem is given by the following nonlinear elliptic-parabolic system
\begin{subequations}\label{jouleheating}
	\begin{alignat}{2}
		D_t u - \Delta u &= \sigma(u) |\nabla \varphi|^2,& \quad &\text{in } \Omega \times (0,T),\label{joule1}\\
		\nabla\cdot (\sigma(u)\nabla \varphi) &= 0,& &\text{in } \Omega \times (0,T),\label{joule2}\\
		u &= g_u,& &\text{on } \Gamma^u_{D} \times (0,T),\label{bc1}\\
		\varphi &= g_\varphi,& &\text{on } \Gamma^\varphi_{D} \times (0,T),\label{bc2}\\
		n \cdot \nabla u &= 0,& &\text{on } \Gamma^u_{N} \times (0,T),\label{bc3}\\
		n \cdot \nabla \varphi &= 0,& &\text{on } \Gamma^\varphi_{N} \times (0,T),\label{bc4}\\
		u(\cdot,0) &= u_0,& &\text{in } \label{initial}\Omega.
	\end{alignat}
\end{subequations} 

Let $W^k_p(\Omega)$ denote the classical range of Sobolev spaces and define
\begin{align*}
	W^k_p(\Omega;\Gamma^u_D) := \{v\in W^k_p(\Omega): v|_{\Gamma^u_D}=0\}, \quad \text{for } k>1/p.
\end{align*} 
The space $W^k_p(\Omega;\Gamma^\varphi_D)$ is defined analogously and $H^1$ is used to denote $W^1_2$. We also use $V^\ast$ for the dual space to $V$. Furthermore, we adopt the notation $L_p(0,T;V)$ for the Bochner space with norm
\begin{align*}
	\|v\|_{L_p(0,T;V)} &= \Big(\int_0^T \|v\|_V^p \, \mathrm{dt} \Big)^{1/p}, \quad 1\leq p<\infty,\\
	\|v\|_{L_\infty(0,T;V)} &= \esssup_{0\leq t \leq T} \|v\|_V,
\end{align*}
where $V$ is a Banach space equipped with the norm $\|\cdot\|_V$. The notation $v\in H^1(0,T;V)$ is used to denote $v,D_t v\in L_2(0,T;V)$. Finally, $C_b(\Omega)$ is the space of bounded continuous functions.

\subsection{Classical variational formulation}
To this end we make the following assumptions on the domain and the data.
\begin{enumerate}[label=(A\arabic*)]
	\item $\Omega\subseteq \R^3$ is a bounded domain with Lipschitz boundary, $\meas(\Gamma^u_D)>0$, and $\meas(\Gamma^\varphi_D)>0$. \label{ass_omega}
	\item  $g_u \in L_2(0,T;H^1(\Omega))\cap H^1(0,T;H^1(\Omega)^\ast)$ and there are points \\ $0 = t_0 < t_1 < \cdots < t_K = T$, $K \in \mathbb{N}$, such that
  \begin{align*}
    D_t g_u & \in C_b([t_i,t_{i+1});H^1(\Omega)^\ast),\\
    g_\varphi & \in C_b([t_i,t_{i+1}); W^1_3(\Omega)\cap L_\infty(\Omega)),
  \end{align*}
  on each subinterval $[t_i,t_{i+1})$. \label{ass_g}
	\item $u_0 \in L_2(\Omega)$. \label{ass_intial}
	\item $\sigma \in C^1(\R)$, Lipschitz continuous, and $0< \sigmalow \leq \sigma(x) \leq \sigmaup < \infty$, $\forall x \in \R$. \label{ass_sigma}
\end{enumerate}

A weak solution to the Joule heating problem \eqref{jouleheating} is a pair \\ $(u,\varphi)=(g_u + \tilde u, g_\varphi + \tilde \varphi)$ such that 
\begin{align*}
	(\tilde u, \tilde \varphi)\in L_2(0,T;H^1(\Omega;\Gamma^u_D)) \cap H^1(0,T;H^1(\Omega;\Gamma^u_D)^\ast) \times L_2(0,T;H^1(\Omega,\Gamma^\varphi_D))
\end{align*}
and for a.e.~$t \in (0,T]$
\begin{subequations}\label{classicalweak}
	\begin{align}
		\langle D_t u, v \rangle + \langle \nabla u, \nabla v \rangle &= \langle \sigma(u)|\nabla \varphi|^2, v \rangle ,\label{classicalweaku}\\
		\langle \sigma(u) \nabla \varphi, \nabla w \rangle &= 0, \label{classicalweakphi}\\
		\langle u(0), z \rangle &= \langle u_0, z \rangle,\label{classicalweak0}
	\end{align}
\end{subequations}
for all $(v,w)\in W^1_\infty(\Omega; \Gamma^u_D) \times H^1(\Omega;\Gamma^\varphi_D)$ and $z \in L_2(\Omega)$, see, for instance, \cite{Cimatti92}. Note that $\langle \cdot, \cdot \rangle$ is used to denote both the inner product in $L_2$ and the duality bracket. The choice of spaces guarantees $\sigma(u)|\nabla \varphi|^2 \in L_1(0,T;L_1(\Omega))$ so that the right-hand side in \eqref{classicalweaku} is well-defined for all $v \in W^1_\infty(\Omega; \Gamma^u_D)$.

Throughout the text we adopt the notational convention that for a function $\flat$ one understands $\tilde \flat = \flat - g_\varphi$ if $\flat$ is a Greek letter and $\tilde \flat = \flat - g_u$ if $\flat$ is a Latin letter.

\begin{remark}
	In some works, e.g.~\cite{Roubicek13}, the notion of strong (instead of weak) solution is used when the equation is satisfied almost everywhere in time. 
\end{remark}

The following lemma provides a maximum principle for $\varphi(x,t)$.

\begin{lemma}\label{phi_reg}
	If $(u,\varphi)$ is a solution to \eqref{classicalweak}, then 
	$g_\circ \leq \varphi(x,t) \leq g^\circ$ for a.e.~$(x,t)\in \bar\Omega\times[0,T]$, where
	\begin{align*}
		g^\circ := \max_{(x,t) \in \Gamma^\varphi_D \times [0,T]} g_\varphi(x,t), \quad g_\circ := \min_{(x,t) \in \Gamma^\varphi_D \times [0,T]} g_\varphi(x,t),
	\end{align*}
\end{lemma}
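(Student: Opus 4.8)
The plan is to prove the maximum principle for $\varphi$ by a Stampacchia-type truncation argument, testing the weak equation \eqref{classicalweakphi} with the positive part of $\varphi$ beyond its boundary values. Fix a time $t$ for which \eqref{classicalweakphi} holds and set $w_+ := (\varphi(\cdot,t) - g^\circ)^+$. Since $g^\circ \geq g_\varphi(\cdot,t)$ on $\Gamma^\varphi_D$, the trace of $\varphi(\cdot,t) - g^\circ$ is nonpositive there, so $w_+$ vanishes on $\Gamma^\varphi_D$ and hence $w_+ \in H^1(\Omega;\Gamma^\varphi_D)$ is an admissible test function. Using $w_+$ in \eqref{classicalweakphi} gives
\begin{align*}
  0 = \langle \sigma(u)\nabla\varphi, \nabla w_+\rangle = \int_{\{\varphi > g^\circ\}} \sigma(u)\,|\nabla\varphi|^2 \intd x \geq \sigmalow \int_\Omega |\nabla w_+|^2 \intd x,
\end{align*}
where we used $\nabla w_+ = \mathbf 1_{\{\varphi>g^\circ\}}\nabla\varphi$ and $\sigma \geq \sigmalow > 0$ from \ref{ass_sigma}. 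Hence $\nabla w_+ = 0$ a.e., so $w_+$ is constant on each connected component of $\Omega$; since it vanishes on $\Gamma^\varphi_D$, which has positive measure in $\partial\Omega$ by \ref{ass_omega}, a Poincaré–Friedrichs inequality on $H^1(\Omega;\Gamma^\varphi_D)$ forces $w_+ = 0$, i.e. $\varphi(\cdot,t) \leq g^\circ$ a.e. The lower bound $\varphi(\cdot,t) \geq g_\circ$ follows by the symmetric argument with $w_- := (g_\circ - \varphi(\cdot,t))^+$, or by applying the upper bound to $-\varphi$ with boundary data $-g_\varphi$.

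To pass from "for a.e. $t$" to the stated "for a.e. $(x,t)$" one invokes Fubini: the set of bad times has measure zero, and on the complement the spatial bound holds a.e. in $x$, so the exceptional set in $\bar\Omega\times[0,T]$ is null. The extension of the bound from $\Omega$ to $\bar\Omega$ is in the sense of the essential supremum/infimum and needs no extra work. One should also note that $g^\circ$ and $g_\circ$ are finite and the max/min are attained: this uses \ref{ass_g}, which gives $g_\varphi \in C_b([t_i,t_{i+1});W^1_3(\Omega)\cap L_\infty(\Omega))$ on finitely many subintervals, so $\|g_\varphi\|_{L_\infty}$ is bounded on $[0,T]$; strictly speaking the "max" over $\Gamma^\varphi_D\times[0,T]$ should be read as an essential supremum of the trace, which is the quantity that actually appears in the truncation argument.

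The only real subtlety — and the step I expect to require the most care — is justifying that $w_+ \in H^1(\Omega;\Gamma^\varphi_D)$, i.e. that truncation interacts correctly with the (generalized) trace on a Lipschitz domain with only partial Dirichlet boundary. The clean way is to characterize $H^1(\Omega;\Gamma^\varphi_D)$ as the closure of test functions vanishing near $\Gamma^\varphi_D$, recall that $v\mapsto v^+$ is continuous on $H^1(\Omega)$ and maps functions with nonpositive trace on $\Gamma^\varphi_D$ into the subspace (since $(\varphi - g^\circ)^+ \leq (\tilde\varphi + g_\varphi - g^\circ)^+$ and the latter is controlled by $\tilde\varphi^+ \in H^1(\Omega;\Gamma^\varphi_D)$ plus a function vanishing on $\Gamma^\varphi_D$), and then use that $\tilde\varphi(\cdot,t)\in H^1(\Omega;\Gamma^\varphi_D)$ by the definition of weak solution. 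An alternative, entirely elementary for this formulation, is simply to observe that $g_\varphi(\cdot,t)\in L_\infty(\Omega)$ and $g^\circ = \esssup_{\Gamma^\varphi_D}g_\varphi$, so $(\varphi - g^\circ)^+ = (\tilde\varphi - (g^\circ - g_\varphi))^+$ with $g^\circ - g_\varphi \geq 0$ a.e. on $\Gamma^\varphi_D$; this shows $(\varphi - g^\circ)^+ \leq \tilde\varphi^+$ pointwise on a neighbourhood of $\Gamma^\varphi_D$ in the trace sense, which suffices. The rest of the argument is the standard Poincaré-plus-coercivity computation above and is routine.
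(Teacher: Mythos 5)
Your proposal is correct and follows essentially the same route as the paper: test \eqref{classicalweakphi} with the truncation $(\varphi-g^\circ)^+\in H^1(\Omega;\Gamma^\varphi_D)$, use $\sigma\geq\sigmalow$ to conclude the gradient of the truncation vanishes, and finish with Poincar\'e--Friedrichs; the paper merely integrates over $[0,T]$ at once rather than arguing at fixed $t$ and invoking Fubini. Your additional care about why the truncation has the correct vanishing trace on $\Gamma^\varphi_D$ is a detail the paper leaves implicit.
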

\begin{proof}
	Define $\chi = \max(0,\varphi-g^\circ) \in L_2(0,T;H^1(\Omega;\Gamma^\varphi_D))$ and choose $w=\chi(t)$ in \eqref{classicalweakphi} and integrate from $0$ to $T$. Then
	\begin{align*}
		0 &= \int_0^T \langle \sigma(u) \nabla \varphi, \nabla \chi \rangle \intd t = \int_0^T \langle \sigma(u) \nabla (\varphi-g^\circ), \nabla \chi \rangle \intd t \\&= \int_0^T \int_{\supp(\chi)\cap \Omega}\sigma(u) \nabla \chi \cdot \nabla \chi \intd x\intd t = \int_0^T \langle \sigma(u) \nabla \chi, \nabla \chi \rangle \intd t.
	\end{align*}
	Using $\sigma(u)\geq \sigma_\circ$ and the Poincar\'e-Friedrichs inequality we get $\int_0^T\|\chi\|^2\intd t=0$ and we deduce $\varphi \leq g^\circ$. A similar argument using $g_\circ$ proves $\varphi \geq g_\circ$. This gives $\varphi \in L_\infty(0,T;L_\infty(\Omega))$. 
\end{proof}

In one and two spatial dimensions the formulation \eqref{classicalweak} is suitable for proving existence of a solution, see, e.g.~\cite{Cimatti92, Elliott95}. However, because of the low regularity of the right-hand side in \eqref{classicalweaku} this strategy does not apply to the three dimensional setting. To overcome this difficulty, it can be proved that due to \eqref{joule2}, see, for instance, \cite{Antontsev94,Howison93},
\begin{align}\label{rewrite}
	\sigma(u) |\nabla \varphi|^2 = \nabla \cdot (\sigma(u)\varphi\nabla \varphi),
\end{align}
and from Lemma~\ref{phi_reg} it follows that $\nabla \cdot (\sigma(u)\varphi\nabla \varphi) \in L_2(0,T;H^1(\Omega,\Gamma^u_D)^\ast)$. With this right-hand side it is now possible to use Schauder's fixed point theorem to prove existence of a solution also in three dimensions.

\begin{thm}\label{existence_uniqueness_classical}
		There exists a solution $(u,\varphi)$ to \eqref{classicalweak}. If $\nabla \varphi \in L_{\frac{2q}{q-3}}(0,T;L_q(\Omega))$ for some $q>3$, then the solution is unique. 
\end{thm}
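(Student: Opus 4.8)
My plan is to treat the two assertions separately: existence by a compactness/fixed–point argument, uniqueness by a Grönwall estimate on the difference of two solutions.

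For existence I would run a Schauder fixed–point iteration on the temperature. Given $\bar u\in L_2(0,T;L_2(\Omega))$, first solve the elliptic problem $\langle\sigma(\bar u)\nabla\varphi,\nabla w\rangle=0$ for $\varphi-g_\varphi\in L_2(0,T;H^1(\Omega;\Gamma^\varphi_D))$; this is well posed by Lax--Milgram using \ref{ass_sigma}, and Lemma~\ref{phi_reg} together with testing by $\tilde\varphi$ yields the uniform bounds $g_\circ\le\varphi\le g^\circ$ and $\|\varphi\|_{L_2(0,T;H^1)}\le C$, $C$ depending only on the data. By \eqref{rewrite} the Joule term equals $\nabla\cdot(\sigma(\bar u)\varphi\nabla\varphi)\in L_2(0,T;H^1(\Omega;\Gamma^u_D)^\ast)$, so I then solve the linear parabolic problem $\langle D_t u,v\rangle+\langle\nabla u,\nabla v\rangle=-\langle\sigma(\bar u)\varphi\nabla\varphi,\nabla v\rangle$ with $u(0)=u_0$ and $u-g_u\in L_2(0,T;H^1(\Omega;\Gamma^u_D))\cap H^1(0,T;H^1(\Omega;\Gamma^u_D)^\ast)$, and set $\M\colon\bar u\mapsto u$. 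Parabolic energy estimates show that $\M$ maps a large ball of $L_2(0,T;L_2(\Omega))$ into a bounded subset of $L_2(0,T;H^1(\Omega))\cap H^1(0,T;H^1(\Omega;\Gamma^u_D)^\ast)$, which is precompact in $L_2(0,T;L_2(\Omega))$ by the Aubin--Lions lemma, so $\M$ is compact; continuity of $\M$ follows by passing to the limit in the two linear problems (using $\sigma\in C(\R)$, dominated convergence, and their unique solvability). Schauder's theorem gives a fixed point $u=\M u$ with associated $\varphi$, and that $(u,\varphi)$ solves \eqref{classicalweak}, in particular recovering \eqref{classicalweaku} on the smaller test space $W^1_\infty(\Omega;\Gamma^u_D)$, is exactly the content of \eqref{rewrite} (equivalence of the formulations).

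For uniqueness I would compare two solutions with $\nabla\varphi_i\in L_{2q/(q-3)}(0,T;L_q(\Omega))$ and set $e=u_1-u_2$, $\eta=\varphi_1-\varphi_2$; then $e,\eta$ vanish on the respective Dirichlet parts and $e(0)=0$. Subtracting \eqref{classicalweakphi}, testing with $w=\eta$, and using \ref{ass_sigma} with Hölder ($\tfrac1{2q/(q-2)}+\tfrac1q+\tfrac12=1$) gives
\[
 \|\nabla\eta\|\ \le\ C\,\|\nabla\varphi\|_{L_q}\,\|e\|_{L_{2q/(q-2)}}.
\]
The point is that under the regularity hypothesis the Joule term lies in $L_{q/(q-3)}(0,T;L_{q/2}(\Omega))\subset L_{q/(q-3)}(0,T;H^{-1}(\Omega))$, so I may subtract \eqref{classicalweaku} for the two solutions and test with $e\in H^1(\Omega;\Gamma^u_D)$ (not merely $W^1_\infty$), obtaining
\[
 \tfrac12\tfrac{d}{dt}\|e\|^2+\|\nabla e\|^2=\langle\sigma(u_1)|\nabla\varphi_1|^2-\sigma(u_2)|\nabla\varphi_2|^2,e\rangle .
\]
Writing $|\nabla\varphi_1|^2-|\nabla\varphi_2|^2=(\nabla\varphi_1+\nabla\varphi_2)\cdot\nabla\eta$ and splitting off the factor $\sigma(u_1)-\sigma(u_2)$, and inserting the bound for $\|\nabla\eta\|$, each resulting piece is dominated by $C\|\nabla\varphi\|_{L_q}^2\,\|e\|_{L_{2q/(q-2)}}^2$; the Gagliardo--Nirenberg inequality $\|e\|_{L_{2q/(q-2)}}^2\le C\|\nabla e\|^{6/q}\|e\|^{2-6/q}$ (using \ref{ass_omega}) and Young's inequality with the conjugate exponents $q/3$ and $q/(q-3)$ then yield
\[
 \tfrac{d}{dt}\|e\|^2+\|\nabla e\|^2\ \le\ C\bigl(1+\|\nabla\varphi\|_{L_q}^{2q/(q-3)}\bigr)\|e\|^2 ,
\]
where the exponent $2q/(q-3)$ is forced by the scaling identity $\tfrac{2}{2q/(q-3)}+\tfrac3q=1$. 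Since $\|\nabla\varphi\|_{L_q}^{2q/(q-3)}\in L_1(0,T)$ by hypothesis, Grönwall's inequality and $e(0)=0$ give $e\equiv 0$, and then $\|\nabla\eta\|\equiv 0$ forces $\eta\equiv 0$.

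The main obstacle is this last estimate. Two features make it delicate. First, one must abandon the merely $L_1$ information on the Joule source and use the hypothesis to place it in $H^{-1}$, so that $e$ is an admissible test function and the favourable algebraic identity for $|\nabla\varphi_1|^2-|\nabla\varphi_2|^2$ can be exploited. Second, the cross term carries the gradients of \emph{both} potentials, and $u$ — unlike $\varphi$ (Lemma~\ref{phi_reg}) — satisfies no maximum principle, so $e\notin L_\infty$; the estimate must therefore be arranged so that every occurrence of $\nabla\varphi$ absorbs exactly the integrability afforded by $\nabla\varphi\in L_{2q/(q-3)}(L_q)$, which is why the Young splitting has to be performed with the precise conjugate exponents above rather than, say, by absorbing a $\|\nabla\varphi\|_{L_q}^2$‑weighted $\|\nabla e\|^2$ term (whose weight need not be small pointwise in time). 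This is the mechanism behind the sufficient condition of \cite{Antontsev94}; accordingly the argument yields uniqueness within the class $\nabla\varphi\in L_{2q/(q-3)}(L_q)$, which is the content of the statement.
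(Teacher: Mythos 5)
Your proposal is correct and follows essentially the same route as the paper, which simply adapts the Schauder fixed-point argument of \cite[Theorem~2.2]{Antontsev94} on $L_2(0,T;L_2(\Omega))$ (using \eqref{rewrite} to place the Joule term in $H^{-1}$) for existence and cites \cite[Theorem~4.1]{Antontsev94} for uniqueness. You have in effect written out the details of those two cited arguments, including the correct Hölder/Gagliardo--Nirenberg/Young exponent bookkeeping that produces the $L_{2q/(q-3)}(L_q)$ condition.
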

\begin{proof}
	This follows by adapting the fixed point argument in \cite[Theorem~2.2]{Antontsev94} to mixed boundary conditions. The proof uses identity \eqref{rewrite} and Schauder's fixed point theorem on the space $L_2(0,T;L_2(\Omega))$. More precisely, we consider the mapping $F:L_2(0,T;L_2(\Omega)) \rightarrow L_2(0,T;L_2(\Omega))$ where $y=F(s)$ is the solution to
	\begin{align*}
	\langle D_t y, v \rangle + \langle \nabla y, \nabla v \rangle &= \langle \nabla \cdot (\sigma(s)\psi\nabla \psi), v \rangle ,\\
	\langle \sigma(s) \nabla \psi, \nabla w \rangle &= 0\\
	\langle y(0), z \rangle &= \langle u_0, z \rangle,
	\end{align*}
	for all $(v,w)\in H^1(\Omega; \Gamma^u_D) \times H^1(\Omega;\Gamma^\varphi_D)$ and $z \in L_2(\Omega)$. It is clear, via \eqref{rewrite} and the fact that $W^1_\infty(\Omega; \Gamma^u_D)$ is dense in $H^1(\Omega; \Gamma^u_D)$, that a fixed point to $F$ solves \eqref{classicalweak}. To prove that $F$ satisfies the conditions of Schauder's fixed point theorem on some ball $B_R$ we may now follow \cite[Theorem~2.2]{Antontsev94}. The mixed boundary conditions only affect the definition of the space $V \subset H^1(\Omega)$, that is, the functions in $V$ in our case only vanish on $\Gamma_D \subset \partial \Omega$.
	
	The uniqueness follows from \cite[Theorem~4.1]{Antontsev94}, also by first adapting the argument to mixed boundary conditions.
\end{proof}

\subsection{Variational formulation with cut-off}
In this paper we are interested in proving convergence of finite element approximations. For this purpose, we propose a variational formulation based on a cut-off functional to avoid using a discrete maximum principle. The cut-off functional was introduced for the stationary problem in \cite{Jensen13}, and is defined as
\begin{align*}
\lceil f \rceil := \min\{\max\{f + g_\varphi, a \}, b\}-g_\varphi.
\end{align*}
for some fixed $a, b \in \mathbb{R}$ with $a \le g_\circ$ and $b \ge g^\circ$. Note that $\min$ and $\max$ are taken over both space and time $\Omega\times[0,T]$ and we have $a - g_\varphi \leq \lceil f \rceil \leq b - g_\varphi$.

To introduce the new weak formulation we define
\begin{align*}
	X &:= L_2(0,T;H^1(\Omega;\Gamma^u_D)) \cap H^1(0,T;H^1(\Omega;\Gamma^u_D)^\ast)\times L_2(0,T;H^1(\Omega,\Gamma^\varphi_D)), \\
	Y &:= H^1(\Omega;\Gamma^u_D) \times H^1(\Omega;\Gamma^\varphi_D).
\end{align*}
Using these spaces, a weak solution to the system \eqref{jouleheating} is a pair \\$(u,\varphi)=(g_u + \tilde u, g_\varphi + \tilde \varphi)$ such that $(\tilde u, \tilde \varphi)\in X$ and for a.e.~$t\in (0,T]$
\begin{subequations}\label{weak}
	\begin{align}
		\langle D_t u, v \rangle + \langle \nabla u, \nabla v \rangle &= -\langle \sigma(u)\lceil \tilde \varphi \rceil \nabla \varphi, \nabla v \rangle  +  \langle \sigma(u) \nabla \varphi \cdot \nabla g_\varphi, v\rangle,\label{weaku} \\
		\langle \sigma(u) \nabla \varphi, \nabla w \rangle &= 0, \label{weakphi}\\
		\langle u(0), z \rangle &= \langle u_0, z \rangle, \label{weakintial}
	\end{align}
\end{subequations}
for all $(v,w)\in Y$ and $z \in L_2(\Omega)$. 

\begin{lemma}\label{weak_forms_eq}
		The set of solutions which satisfy \eqref{weak} is equal to the set of solutions to \eqref{classicalweak}. In particular, the right-hand side in \eqref{weaku} defines an element in $L_2(0,T; (H^1(\Omega; \Gamma^u_D))^\ast)$. 
\end{lemma}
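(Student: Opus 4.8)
The plan is to prove both inclusions at the same time, by exploiting that \eqref{weak} and \eqref{classicalweak} contain literally the same equation for $\varphi$, so that the cut-off is inactive along any solution of either problem. First, the ``in particular'' claim needs no equation at all, only $(\tilde u,\tilde\varphi)\in X$: by \ref{ass_sigma} and the pointwise bounds $\sigmalow\le\sigma(u)\le\sigmaup$ and $a-g_\varphi\le\lceil\tilde\varphi\rceil\le b-g_\varphi$, together with $g_\varphi\in L_\infty$ from \ref{ass_g}, one has $\sigma(u)\lceil\tilde\varphi\rceil\in L_\infty(\Omega\times(0,T))$, while $\nabla\varphi\in L_2(0,T;L_2(\Omega))$; hence $\sigma(u)\lceil\tilde\varphi\rceil\nabla\varphi\in L_2(0,T;L_2(\Omega))$ and $v\mapsto\langle\sigma(u)\lceil\tilde\varphi\rceil\nabla\varphi,\nabla v\rangle$ defines an element of $L_2(0,T;(H^1(\Omega;\Gamma^u_D))^\ast)$. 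For the second term, $\nabla g_\varphi\in L_\infty(0,T;L_3(\Omega))$ by \ref{ass_g}, so $\sigma(u)\nabla\varphi\cdot\nabla g_\varphi\in L_2(0,T;L_{6/5}(\Omega))$, and $L_{6/5}(\Omega)\hookrightarrow(L_6(\Omega))^\ast\hookrightarrow(H^1(\Omega;\Gamma^u_D))^\ast$ via the three-dimensional Sobolev embedding $H^1(\Omega)\hookrightarrow L_6(\Omega)$. This proves the last sentence.

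The core observation is that \eqref{weakphi} is identical to \eqref{classicalweakphi} and that the proof of Lemma~\ref{phi_reg} uses only this equation; consequently every solution of \eqref{weak}, just like every solution of \eqref{classicalweak}, satisfies $a\le g_\circ\le\varphi\le g^\circ\le b$ a.e., so $\lceil\tilde\varphi\rceil=\min\{\max\{\varphi,a\},b\}-g_\varphi=\tilde\varphi$ and the cut-off may be dropped. I would then link the two heat-equation right-hand sides by a substitution in the potential equation. For $v\in W^1_\infty(\Omega;\Gamma^u_D)$ the product $\tilde\varphi v$ belongs to $H^1(\Omega;\Gamma^\varphi_D)$ (product of an $H^1$ function with a bounded Lipschitz function, whose trace vanishes on $\Gamma^\varphi_D$ because that of $\tilde\varphi$ does), hence is admissible in \eqref{weakphi}/\eqref{classicalweakphi}; expanding $\nabla(\tilde\varphi v)=v\nabla\tilde\varphi+\tilde\varphi\nabla v$ and inserting $\nabla\tilde\varphi=\nabla\varphi-\nabla g_\varphi$ gives, for a.e.\ $t$,
\begin{equation*}
	\langle\sigma(u)|\nabla\varphi|^2,v\rangle=-\langle\sigma(u)\tilde\varphi\nabla\varphi,\nabla v\rangle+\langle\sigma(u)\nabla\varphi\cdot\nabla g_\varphi,v\rangle,
\end{equation*}
all pairings being finite by the integrability above; since $\lceil\tilde\varphi\rceil=\tilde\varphi$, the right-hand side is exactly that of \eqref{weaku}.

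The equivalence then follows easily. If $(u,\varphi)$ solves \eqref{classicalweak}, the displayed identity turns \eqref{classicalweaku} into \eqref{weaku} for all $v\in W^1_\infty(\Omega;\Gamma^u_D)$; as both sides of \eqref{weaku} are continuous linear functionals of $v$ on $H^1(\Omega;\Gamma^u_D)$ (by the first paragraph and $D_t u\in L_2(0,T;(H^1(\Omega;\Gamma^u_D))^\ast)$) and $W^1_\infty(\Omega;\Gamma^u_D)$ is dense there, \eqref{weaku} holds for all $v\in H^1(\Omega;\Gamma^u_D)$, while \eqref{weakphi} and \eqref{weakintial} coincide with \eqref{classicalweakphi} and \eqref{classicalweak0}; hence $(u,\varphi)$ solves \eqref{weak}. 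Conversely, if $(u,\varphi)$ solves \eqref{weak}, restricting \eqref{weaku} to $v\in W^1_\infty(\Omega;\Gamma^u_D)\subset H^1(\Omega;\Gamma^u_D)$ and reading the same identity backwards recovers \eqref{classicalweaku}, so $(u,\varphi)$ solves \eqref{classicalweak}.

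I expect the one genuinely delicate point to be the manipulation of the test function $\tilde\varphi v$: keeping $v$ bounded is what makes $\tilde\varphi v\in H^1$ (the product of two $H^1$ functions need not be $H^1$ in three dimensions), which is precisely why the classical formulation tests \eqref{classicalweaku} only against $W^1_\infty$ and why the direction \eqref{classicalweak}$\Rightarrow$\eqref{weak} requires the density step to reach the full test space, whereas the reverse direction does not. Everything else is routine bookkeeping with H\"older's inequality and the Sobolev embedding $H^1(\Omega)\hookrightarrow L_6(\Omega)$.
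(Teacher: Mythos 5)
Your proof is correct and follows essentially the same route as the paper: the identity obtained by testing the potential equation with $w=\tilde\varphi v$ for $v\in W^1_\infty(\Omega;\Gamma^u_D)$, the maximum principle of Lemma~\ref{phi_reg} (whose proof uses only \eqref{classicalweakphi}=\eqref{weakphi}) to deactivate the cut-off, density of $W^1_\infty(\Omega;\Gamma^u_D)$ in $H^1(\Omega;\Gamma^u_D)$, and H\"older plus the Sobolev embedding $H^1(\Omega)\hookrightarrow L_6(\Omega)$ for the dual-space bound. Your explicit justification that $\tilde\varphi v\in H^1(\Omega;\Gamma^\varphi_D)$ and that the maximum principle applies equally to solutions of \eqref{weak} makes precise two points the paper leaves implicit.
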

\begin{proof}
	The identity
	\begin{align*}
		\langle \sigma(u) |\nabla \varphi|^2, v \rangle = -\langle \sigma(u)\tilde \varphi\nabla \varphi,\nabla v \rangle + \langle \sigma(u) \nabla \varphi \cdot \nabla g_\varphi, v\rangle,
	\end{align*}
	for $v \in  W^1_\infty(\Omega)$ and a.e.~$t\in [0,T]$ follows by choosing $w = (\varphi(t)-g_\varphi(t))v$ in \eqref{classicalweakphi}, see \cite[Lemma~1]{Howison93} for the stationary case. The definition of the cut-off functional and the maximum principle for $\varphi$ in Lemma~\ref{phi_reg} implies that $\lceil \tilde \varphi \rceil = \tilde \varphi$. The larger space of test functions does not affect the set of solutions since $W^1_\infty(\Omega;\Gamma^u_D)$ is dense in $H^1(\Omega;\Gamma^u_D)$.
	
	Furthermore, the right-hand side in \eqref{weaku} satisfies the following bound
	\begin{align*}
		|-\langle \sigma(u)\lceil \tilde \varphi \rceil \nabla \varphi,& \nabla v \rangle +  \langle \sigma(u) \nabla \varphi \cdot \nabla g_\varphi, v\rangle| \leq C(\sigma, g_\varphi)  \|\nabla \varphi\|_{L_2(\Omega)} \|\nabla v\|_{L_2(\Omega)} \\&\quad+ \sigma^\circ \|\nabla \varphi\|_{L_2(\Omega)}\|\nabla g_\varphi\|_{L_3(\Omega)} \|v\|_{L_6(\Omega)},
	\end{align*}
	where the Sobolev embedding in $\R^3$ gives $\|v\|_{L_6(\Omega)} \leq C\|v\|_{H^1(\Omega)}$. Hence 
	\begin{align*}
	\int_0^T &\|\nabla \cdot (\sigma(u)\lceil \tilde \varphi \rceil \nabla \varphi) + \sigma(u) \nabla \varphi \cdot \nabla g_\varphi\|^2_{H^1(\Omega;\Gamma^u_D)^\ast} \intd t \\&\leq C(\sigma,g_\varphi)(\|\nabla \varphi\|_{L_2(0,T;L_2(\Omega))} + \|\nabla \varphi\|_{L_2(0,T;L_2(\Omega))}\|\nabla g_\varphi\|_{L_\infty(0,T;L_3(\Omega))}),
	\end{align*}
	where $\|\nabla g_\varphi\|_{L_\infty(0,T;L_3(\Omega))}$ is bounded due to assumption \ref{ass_g}, so the right-hand side defines an element in $L_2(0,T;H^1(\Omega; \Gamma^u_D)^\ast)$.
\end{proof}


\section{Semidiscrete methods}\label{sec_semidiscrete}
In this section we analyze spatially semidiscrete Galerkin methods. We prove existence and uniqueness of semidiscrete solutions and strong convergence to a weak solution satisfying \eqref{weak}.

\subsection{Semidiscrete formulation}
Let $\{V^u_m\}_{m \in \N}$ and $\{V^\varphi_m\}_{m \in \N}$  be hierarchical families of finite-dimensional subspaces, whose unions are dense in $H^1(\Omega; \Gamma^u_D)$ and $H^1(\Omega; \Gamma^\varphi_D)$, respectively and define
\begin{align*}
	X_m &:= \{ v \in C(0,T;V^u_m) : v|_{[t_i, t_{i+1})} \in C^1(t_i, t_{i+1};V^u_m) \; \forall \, i \} \times L_\infty(0,T;V^\varphi_m).
\end{align*}
Typically, $V^u_m$ and $V^\varphi_m$ are finite element spaces corresponding to a family of meshes $\{\mathcal T_m\}_{m \in \N}$. For instance, one may choose Lagrangian finite elements or conforming $hp$-finite elements, see also the numerical examples in Section~\ref{sec:examples}. 
	
We make the following additional assumption on $V^u_m$;
\begin{enumerate}[label=(A\arabic*)] \setcounter{enumi}{4}
	\item Let $V^u_m$ be of a form such that the $L_2$-projection $P_m$ onto $V^u_m$ is stable, uniformly in $m$, in the $H^1$-norm. \label{ass_proj}
\end{enumerate}
In the case when $V^u_m$ is a finite element space, we refer to \cite{Bank14} and references therein, where the $H^1$-stability of the $L_2$-projection is proved for a large class of (non-uniform) meshes in three spatial dimensions.

In the subsequent sections we let $C_m$ denote a generic constant that depends on the discretization $m$, for instance, the mesh size $h_m$.

A semidiscrete Galerkin solution is a pair $(u_m,\varphi_m)=(g_u + \tilde u_m, g_\varphi + \tilde \varphi_m)$ such that $(\tilde u_m, \tilde \varphi_m)\in X_m$ and for a.e.~$t \in (0,T]$
\begin{subequations}\label{semi_weak}
	\begin{align}
	\langle D_t u_m, v \rangle + \langle \nabla u_m, \nabla v \rangle &= -\langle \sigma(u_m)\lceil \tilde \varphi_m \rceil \nabla \varphi_m, \nabla v \rangle \label{semi_weak_u}\\&\quad+ \langle \sigma(u_m) \nabla \varphi_m \cdot \nabla g_\varphi, v\rangle, \notag\\
	\langle \sigma(u_m) \nabla \varphi_m, \nabla w \rangle &= 0, \label{semi_weak_phi} \\
	\langle u_m(0), z \rangle &= \langle u_0, z \rangle, \label{semi_weak_initial}
	\end{align}
\end{subequations}
for all $(v,w) \in V^u_m \times V^\varphi_m$ and $z \in V^u_m$. 

\begin{remark}\label{no_disc_max}
Recall the bound  of the cut-off functional; $a - g_\varphi \leq \lceil \tilde \varphi_m \rceil \leq b- g_\varphi$. This uniform boundedness of $\lceil \tilde \varphi_m \rceil$ in $m$ will allow us to consider the limit of $\langle \sigma(u_m)\lceil \tilde \varphi_m \rceil \nabla \varphi_m, \nabla v \rangle$ as $m \to \infty$ without appealing to a discrete maximum principle.
\end{remark}

\begin{lemma}\label{semi_bounds}
	A solution to \eqref{semi_weak} fulfils the following bounds
	\begin{align}
	\|\nabla \tilde \varphi_m\|_{L_\infty(0,T;L_2(\Omega))} &\leq C(\sigma,g_\varphi),\label{semi_bound_phi}\\
	\|\tilde u_m(T)\|^2_{L_2(\Omega)} + \int_0^T \|\nabla \tilde u_m\|^2_{L_2(\Omega)} \intd t &\leq  C(u_0,\sigma,g_u,D_t g_u, g_\varphi),\label{semi_bound_u}\\
	\int_0^T\|D_t \tilde u_m(t)\|^2_{H^1(\Omega;\Gamma^u_D)^\ast} &\leq  C(u_0,\sigma,g_u,D_t g_u, g_\varphi).\label{semi_bound_u_t}
	\end{align}
\end{lemma}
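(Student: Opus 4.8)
The plan is to derive the three bounds by successively testing the semidiscrete system with carefully chosen functions, exploiting the uniform boundedness of the cut-off functional emphasised in Remark~\ref{no_disc_max}. First I would establish \eqref{semi_bound_phi}. Writing $\varphi_m = g_\varphi + \tilde\varphi_m$ and choosing $w = \tilde\varphi_m(t) \in V^\varphi_m$ in \eqref{semi_weak_phi}, one gets
\[
\langle \sigma(u_m)\nabla\tilde\varphi_m,\nabla\tilde\varphi_m\rangle = -\langle \sigma(u_m)\nabla g_\varphi,\nabla\tilde\varphi_m\rangle.
\]
Using the lower bound $\sigma \ge \sigmalow$ on the left, the upper bound $\sigma \le \sigmaup$ together with Cauchy--Schwarz (or Young's inequality) on the right, and the regularity $\nabla g_\varphi \in L_\infty(0,T;L_3(\Omega)) \hookrightarrow L_\infty(0,T;L_2(\Omega))$ from assumption~\ref{ass_g}, one absorbs the $\nabla\tilde\varphi_m$ term and obtains $\|\nabla\tilde\varphi_m(t)\|_{L_2(\Omega)} \le C(\sigma,g_\varphi)$ for a.e.~$t$, which is \eqref{semi_bound_phi}. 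This step is essentially identical to the proof of Lemma~\ref{phi_reg}, only now at the discrete level, and it requires no maximum principle.

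Next, for \eqref{semi_bound_u}, I would test \eqref{semi_weak_u} with $v = \tilde u_m(t) \in V^u_m$, writing $u_m = g_u + \tilde u_m$ and $D_t u_m = D_t g_u + D_t \tilde u_m$. The term $\langle D_t\tilde u_m,\tilde u_m\rangle = \tfrac12 \tfrac{d}{dt}\|\tilde u_m\|_{L_2(\Omega)}^2$ and $\langle\nabla\tilde u_m,\nabla\tilde u_m\rangle = \|\nabla\tilde u_m\|_{L_2(\Omega)}^2$ give the good terms after integrating in time from $0$ to $T$; the initial value is controlled by \eqref{semi_weak_initial} and $\|P_m u_0\|_{L_2(\Omega)} \le \|u_0\|_{L_2(\Omega)}$. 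On the right-hand side one must bound four contributions: $\langle D_t g_u,\tilde u_m\rangle$ via the duality pairing with $\|D_t g_u\|_{H^1(\Omega)^\ast}$ (using assumption~\ref{ass_g}), $\langle\nabla g_u,\nabla\tilde u_m\rangle$ via Cauchy--Schwarz, the cut-off term $-\langle\sigma(u_m)\lceil\tilde\varphi_m\rceil\nabla\varphi_m,\nabla\tilde u_m\rangle$ using $|\lceil\tilde\varphi_m\rceil| \le \max\{|a - g_\varphi|,|b - g_\varphi|\} \le C(g_\varphi)$ in $L_\infty$, $\sigma \le \sigmaup$, and the already-established bound on $\|\nabla\varphi_m\|_{L_2(\Omega)} \le \|\nabla\tilde\varphi_m\|_{L_2(\Omega)} + \|\nabla g_\varphi\|_{L_2(\Omega)} \le C$, and finally the term $\langle\sigma(u_m)\nabla\varphi_m\cdot\nabla g_\varphi,\tilde u_m\rangle$ via Hölder ($L_2 \cdot L_3 \cdot L_6$) followed by the Sobolev embedding $H^1(\Omega) \hookrightarrow L_6(\Omega)$ in $\R^3$. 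Every term is then either an $L_2$-in-time quantity times $\|\nabla\tilde u_m\|_{L_2(\Omega)}$ or involves $\|\tilde u_m\|_{L_2(\Omega)}$, so a Young-inequality split lets one absorb $\tfrac12\|\nabla\tilde u_m\|_{L_2(\Omega)}^2$ into the left-hand side (using the Poincaré--Friedrichs inequality, valid since $\meas(\Gamma^u_D)>0$ by~\ref{ass_omega}, to control $\|\tilde u_m\|_{L_2(\Omega)}$ by $\|\nabla\tilde u_m\|_{L_2(\Omega)}$ where needed), and Grönwall's inequality closes the estimate, giving \eqref{semi_bound_u}.

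Finally, for \eqref{semi_bound_u_t}, I would estimate $\|D_t\tilde u_m(t)\|_{H^1(\Omega;\Gamma^u_D)^\ast}$ by duality: for $v \in H^1(\Omega;\Gamma^u_D)$ with $\|v\|_{H^1} \le 1$, write $D_t\tilde u_m = D_t u_m - D_t g_u$ and use \eqref{semi_weak_u} with test function $P_m v \in V^u_m$, noting $\langle D_t\tilde u_m, v\rangle = \langle D_t\tilde u_m, P_m v\rangle$ since $D_t\tilde u_m \in V^u_m$ and $P_m$ is the $L_2$-projection. Each resulting term is bounded exactly as above but now with $\|v\|_{L_6}$ and $\|\nabla v\|_{L_2}$ in place of the corresponding norms of $\tilde u_m$; crucially, the $H^1$-stability of $P_m$ uniform in $m$, assumption~\ref{ass_proj}, ensures $\|P_m v\|_{H^1(\Omega)} \le C\|v\|_{H^1(\Omega)} \le C$ with $C$ independent of $m$. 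Integrating the square in time and invoking \eqref{semi_bound_u} to control $\int_0^T \|\nabla\tilde u_m\|_{L_2(\Omega)}^2 \intd t$ and \eqref{semi_bound_phi} for the $\varphi_m$ terms yields \eqref{semi_bound_u_t}.

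The main obstacle I anticipate is the bookkeeping in \eqref{semi_bound_u}: ensuring that every right-hand side term is genuinely absorbable — in particular that the critical cut-off term $\langle\sigma(u_m)\lceil\tilde\varphi_m\rceil\nabla\varphi_m,\nabla\tilde u_m\rangle$ contributes only an $L_2(0,T;L_2(\Omega))$ factor (which it does, precisely because $\lceil\tilde\varphi_m\rceil$ is bounded in $L_\infty$ independently of $m$ and $\nabla\varphi_m$ is bounded in $L_\infty(0,T;L_2(\Omega))$ by \eqref{semi_bound_phi}), rather than the $L_1$-only control one would get from the untreated source $\sigma(u)|\nabla\varphi|^2$. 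This is exactly the point of the cut-off reformulation, and it is what makes the argument go through on a general conforming space without a discrete maximum principle. A secondary technical care is that $\tilde u_m$ is only piecewise $C^1$ in time across the breakpoints $t_i$ from assumption~\ref{ass_g}, so the fundamental theorem of calculus must be applied on each subinterval $[t_i,t_{i+1})$ and the contributions summed, using continuity of $\tilde u_m$ (though not of $D_t\tilde u_m$) across the $t_i$.
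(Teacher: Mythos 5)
Your proposal is correct and follows essentially the same route as the paper: testing \eqref{semi_weak_phi} with $\tilde\varphi_m$ for \eqref{semi_bound_phi}, testing \eqref{semi_weak_u} with $\tilde u_m$ and absorbing via Young/Poincar\'e--Friedrichs and the uniform $L_\infty$-bound on the cut-off for \eqref{semi_bound_u}, and a duality argument through the $H^1$-stable $L_2$-projection \ref{ass_proj} for \eqref{semi_bound_u_t}. (The Gr\"onwall step you mention is not actually needed once the gradient term is absorbed, and the paper dispenses with it; otherwise the arguments coincide.)
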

\begin{proof}
	By choosing $w=\tilde \varphi_m(t)$ in \eqref{semi_weak_phi} we can prove 
	\begin{align*}
	\|\nabla \tilde \varphi_m(t)\|^2_{L_2(\Omega)} \leq C(\sigma)\|\nabla g_\varphi(t)\|^2_{L_2(\Omega)},
	\end{align*}
	and \eqref{semi_bound_phi} follows by using \ref{ass_g} and \ref{ass_sigma}. 
	
	By choosing $v=\tilde u_m(t)$ in \eqref{semi_weak_u} and integrating from $0$ to $T$ we have
	\begin{align*}
	& \int_0^T \langle D_t \tilde u_m, \tilde u_m \rangle \intd t +\int_0^T \| \nabla \tilde u_m \|^2_{L_2(\Omega)} \intd t\\
= &\,  -\int_0^T \langle D_t g_u, \tilde u_m \rangle \intd t - \int_0^T \langle \nabla g_u, \nabla \tilde u_m \rangle \intd t - \int_0^T \langle \sigma(u_m)\lceil\tilde \varphi_m\rceil \nabla \varphi_m, \nabla\tilde u_m \rangle \intd t\\
& \, + \int_0^T \langle \sigma(u_m)\nabla \varphi_m\cdot \nabla g_\varphi, \tilde u_m \rangle \intd t \\ 
= & \; I + II + III + IV.
	\end{align*}
	Using the Cauchy-Schwarz, Poincar\'{e}, and Young's (weighted) inequality we get
	\begin{align} \label{semi_est_1}
	I + II \leq \frac{1}{4} \int_0^T \|\nabla \tilde u_m\|^2_{L_2(\Omega)} \intd t + C\int_0^T\|D_t g_u\|^2_{(H^1(\Omega;\Gamma^u_D))^\ast} + \|\nabla g_u\|^2_{L_2(\Omega)} \intd t,
	\end{align} 
	and
	\begin{align}\label{semi_est_2}
	III + IV &\leq
	\frac{1}{4}\int_0^T \|\nabla \tilde u_m\|^2_{L_2(\Omega)}\intd t + C(\sigma,g_\varphi)\Big(\|g_\varphi\|^2_{L_\infty(0,T;W^1_3(\Omega))} \\&\quad + \int_0^T\|\nabla \varphi_m\|^2_{L_2(\Omega)} \intd t \Big), \notag
	\end{align}
	where we used Sobolev embeddings as in the proof of Lemma~\ref{weak_forms_eq}. We can now use \eqref{semi_bound_phi} to bound the last term on the right-hand side. Finally, using \eqref{semi_weak_initial} we have
	\begin{align*}
	2\int_0^T \langle D_t{\tilde u}_m, \tilde u_m \rangle \intd t &= \int_0^T D_t \|\tilde u_m\|^2_{L_2(\Omega)} = \|\tilde u_m(T)\|^2_{L_2(\Omega)} - \|\tilde u_m(0)\|^2_{L_2(\Omega)} \\& \geq \|\tilde u_m(T)\|^2_{L_2(\Omega)} - \|u_0\|^2_{L_2(\Omega)} - \|g_u(0)\|^2_{L_2(\Omega)},
	\end{align*}
	and \eqref{semi_bound_u} follows.

	Observe that $D_t \tilde u_m$ belongs to $V^u_m$. With $P_m$ denoting the $L_2$-projection onto $V^u_m$ we get 
	\begin{align*} 
	\|D_t \tilde u_m(t)\|_{H^1(\Omega; \Gamma^u_D)^\ast} &= \sup_{v \in H^1(\Omega; \Gamma^u_D) \atop v \neq 0} \frac{\langle D_t \tilde u_m(t), v \rangle}{\|v\|_{H^1(\Omega)}} = \sup_{v \in H^1(\Omega; \Gamma^u_D) \atop v \neq 0} \frac{\langle D_t \tilde u_m(t), P_m v \rangle}{\|v\|_{H^1(\Omega)}}\\
&\leq \sup_{v \in H^1(\Omega; \Gamma^u_D)\atop P_m v \neq 0} C\frac{\langle D_t \tilde u_m(t), P_m v \rangle}{\|P_m v\|_{H^1(\Omega)}}\notag = \sup_{v \in V^u_m \atop v \neq 0} C\frac{\langle D_t \tilde u_m(t), v \rangle}{\|v\|_{H^1(\Omega)}},
	\end{align*}  
where $C$ is the $H^1$-norm of the $L_2$-projection, which is independent of $m$ due to \ref{ass_proj}.
Now we can use bounds similar to \eqref{semi_est_1} and \eqref{semi_est_2} to prove \eqref{semi_bound_u_t}.
\end{proof}

\begin{lemma}\label{semi_existence_uniqueness} 
	There exists a unique solution $(\tilde u_m,\tilde \varphi_m)\in X_m$ to \eqref{semi_weak}. 
\end{lemma}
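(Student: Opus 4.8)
The plan is to treat the coupled system as a fixed-point problem in a suitable finite-dimensional setting and to leverage the a priori bounds already established in Lemma~\ref{semi_bounds}. First I would handle the equation for $\varphi_m$: for a \emph{given} $u_m \in C(0,T;V^u_m)$, equation \eqref{semi_weak_phi} is, at a.e.\ time $t$, a linear elliptic problem for $\tilde\varphi_m(t) \in V^\varphi_m$ with the coercive, bounded bilinear form $w \mapsto \langle \sigma(u_m(t))\nabla(\tilde\varphi_m + g_\varphi),\nabla w\rangle$; coercivity follows from $\sigma \ge \sigmalow$ and the Poincar\'e--Friedrichs inequality on $H^1(\Omega;\Gamma^\varphi_D)$ (using $\meas(\Gamma^\varphi_D)>0$ from \ref{ass_omega}). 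Lax--Milgram gives a unique $\tilde\varphi_m(t)$, and since $\sigma \in C^1$ and $u_m$ is continuous in $t$, the solution map $t \mapsto \tilde\varphi_m(t)$ is continuous into $V^\varphi_m$, so in particular $\tilde\varphi_m \in L_\infty(0,T;V^\varphi_m)$. This defines a (nonlinear but continuous) solution operator $\Phi_m \colon C(0,T;V^u_m) \to L_\infty(0,T;V^\varphi_m)$, $u_m \mapsto \varphi_m$.

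Next I would substitute $\varphi_m = \Phi_m(u_m)$ into \eqref{semi_weak_u}, turning it into a system of ODEs for the coefficient vector of $\tilde u_m(t)$ in a basis of $V^u_m$. Writing $\tilde u_m(t) = \sum_j c_j(t)\phi_j$, \eqref{semi_weak_u} becomes $M\dot c = G(c,t)$ with $M$ the (invertible) mass matrix on $V^u_m$ and $G$ assembled from the stiffness term, the boundary-data term $\langle D_t g_u,\cdot\rangle + \langle\nabla g_u,\nabla\cdot\rangle$ (which is merely $L_2$ in time and $C_b$ on each $[t_i,t_{i+1})$ by \ref{ass_g}, hence defines an $L_2$-in-time, Carath\'eodory right-hand side), and the two $\varphi_m$-dependent terms. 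Because $\sigma$ is $C^1$ and Lipschitz, the cut-off $\lceil\cdot\rceil$ is Lipschitz and uniformly bounded, and $\Phi_m$ depends continuously (indeed locally Lipschitz, via differentiating Lax--Milgram) on $u_m$, the map $c \mapsto G(c,t)$ is continuous (locally Lipschitz) in $c$ and $L_2$ in $t$. Carath\'eodory's existence theorem on each subinterval $[t_i,t_{i+1})$ yields a local absolutely continuous solution; the a priori bound \eqref{semi_bound_u} (applied on truncated intervals $[0,\tau]$, combined with equivalence of norms on the finite-dimensional $V^u_m$) prevents blow-up, so the solution extends to all of $[0,T]$ with $\tilde u_m \in X_m$. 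The initial condition \eqref{semi_weak_initial} is imposed by setting $\tilde u_m(0) = P_m u_0 - g_u(0)|_{V^u_m}$, i.e.\ the $L_2(\Omega)$-projection of $u_0$ onto $V^u_m$ shifted by the (projected) lift, which satisfies $\langle u_m(0),z\rangle = \langle u_0,z\rangle$ for all $z \in V^u_m$.

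For uniqueness I would argue directly on the coupled system. Suppose $(\tilde u_m^{(1)},\tilde\varphi_m^{(1)})$ and $(\tilde u_m^{(2)},\tilde\varphi_m^{(2)})$ both solve \eqref{semi_weak}. Subtracting the $\varphi$-equations and testing with $\tilde\varphi_m^{(1)} - \tilde\varphi_m^{(2)}$, using coercivity and the Lipschitz bound on $\sigma$ together with $\|\nabla\varphi_m^{(i)}\|_{L_\infty(0,T;L_2)} \le C$ from \eqref{semi_bound_phi}, gives a pointwise-in-time estimate $\|\nabla(\tilde\varphi_m^{(1)}-\tilde\varphi_m^{(2)})(t)\|_{L_2} \le C_m\|u_m^{(1)}(t)-u_m^{(2)}(t)\|_{L_2}$ (here $C_m$ may depend on $m$, via the inverse inequality converting the $L_6$ or $L_3$ factors on $V^u_m$ into $L_2$ — this is where finite-dimensionality is essential). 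Then subtracting the $u$-equations, testing with $\tilde u_m^{(1)} - \tilde u_m^{(2)}$, estimating the difference of the right-hand sides using the Lipschitz continuity of $\sigma$ and of $\lceil\cdot\rceil$, the uniform bounds on $\varphi_m^{(i)}$ and $\lceil\tilde\varphi_m^{(i)}\rceil$, the inverse inequality on $V^u_m$, and the $\varphi$-difference estimate just obtained, yields $\tfrac{1}{2}D_t\|\tilde u_m^{(1)}-\tilde u_m^{(2)}\|_{L_2}^2 \le C_m\|\tilde u_m^{(1)}-\tilde u_m^{(2)}\|_{L_2}^2$; since both solutions have the same initial data, Gr\"onwall's inequality forces $\tilde u_m^{(1)} = \tilde u_m^{(2)}$ on $[0,T]$, and then $\tilde\varphi_m^{(1)} = \tilde\varphi_m^{(2)}$ by unique solvability of the elliptic problem.

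The main obstacle I anticipate is the bookkeeping around the nonlinearity $\sigma(u_m)|\nabla\varphi_m|^2$-type term: one must carefully track that all the awkward terms — the product $\sigma(u_m)\lceil\tilde\varphi_m\rceil\nabla\varphi_m$ and $\sigma(u_m)\nabla\varphi_m\cdot\nabla g_\varphi$ — are genuinely Lipschitz (not just continuous) in $u_m$ when restricted to a bounded subset of the finite-dimensional space, which requires combining the Lipschitz property of $\sigma$ and $\lceil\cdot\rceil$, continuity of the elliptic solution operator $\Phi_m$, and inverse inequalities on $V^u_m$ to absorb the higher-integrability norms ($L_3$, $L_6$) of $g_\varphi$ against $L_2$-norms of discrete functions. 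A minor point to state cleanly is that $D_t g_u$ lives only in $H^1(\Omega)^\ast$ (not $L_2$), so the right-hand side of the ODE is interpreted via the duality pairing on $V^u_m$, which is harmless since $V^u_m \subset H^1(\Omega;\Gamma^u_D)$ is finite-dimensional; the Carath\'eodory framework then accommodates the merely-$L_2$-in-time and piecewise-$C_b$ data from \ref{ass_g}.
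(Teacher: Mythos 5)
Your proposal is correct and follows essentially the same route as the paper: reduce \eqref{semi_weak} to a finite-dimensional ODE system by substituting the elliptic solution operator for $\varphi_m$, establish (local) Lipschitz continuity of the resulting right-hand side using the Lipschitz continuity and boundedness of $\sigma$ and $\lceil\cdot\rceil$ together with inverse inequalities on $V^u_m$, and use the a priori bounds of Lemma~\ref{semi_bounds} to rule out blow-up and extend to $[0,T]$. The only (harmless) variation is in the ODE machinery: you invoke Carath\'eodory existence plus a separate Gr\"onwall uniqueness argument, whereas the paper applies Picard--Lindel\"of on each subinterval $[t_i,t_{i+1})$ where the data of \ref{ass_g} are continuous and concatenates the solutions by induction.
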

\begin{proof}

For each $\tilde u_m$ there is a solution $\tilde \varphi_m = S(\tilde u_m)$ of \eqref{semi_weak_phi}, which defines a mapping $S:L_2(0,T;V^u_m)\rightarrow L_2(0,T;V^\varphi_m)$.

First, we assume that $g_\varphi(x,\cdot)$ and $D_t g_u(x,\cdot)$ are continuous to prove existence and uniqueness on $[0,T]$. Let $\{\lambda_i\}_{i=1}^M$ be a basis for $V^u_m$. Then $\tilde u_m = \sum_{j=1}^M \alpha_j(t) \lambda_j$ for some $\alpha(t) = (\alpha_j(t)) \in \R^M$. By substituting into \eqref{semi_weak_u} we arrive at the following system of ODEs
	\begin{align}\label{ODE_alpha}
		M D_t \alpha(t) + K \alpha(t) = F(\alpha(t),t) + G(t),
	\end{align}
	where $M$ and $K$ denote the mass and stiffness matrices, respectively, and
	\begin{align*}
	F_i(\alpha(t),t) &= -\langle \sigma(u_m(t))\lceil \tilde \varphi_m(t) \rceil \nabla \varphi_m(t), \nabla \lambda_i \rangle + \langle \sigma(u_m(t)) \nabla \varphi_m(t) \cdot \nabla g_\varphi(t), \lambda_i\rangle, \\
	G_i(t) &= -\langle D_t g_u(t), \lambda_i \rangle - \langle \nabla g_u(t), \nabla \lambda_i \rangle,
	\end{align*}
where $\tilde \varphi_m = S(\tilde u_m)$. The initial data is given by \eqref{semi_weak_initial} and corresponds to the equation $M \alpha(0) = b$, where $b_i = \langle u_0-g_u(0), \lambda_i \rangle$. 
	
	Let $\tilde u^1_m = \sum_{j=1}^M \alpha^1_j\lambda_j$ and $\tilde u^2_m = \sum_{j=1}^M \alpha^2_j\lambda_j$, and $\tilde \varphi^1_m := S(\tilde u^1_m)$ and $\tilde \varphi^2_m := S(\tilde u^2_m)$. Note that
	\begin{align*}
		\|\nabla(\tilde u^1_m &- \tilde u^2_m)\|_{L_2(\Omega)} \leq C_m \, \|\alpha^1-\alpha^2\|_{\R^M},
	\end{align*}
due to the Lipschitz continuity of linear mappings in $V^u_m$.
	For $\tilde \varphi^1_m- \tilde \varphi^2_m$ we use Strang's first Lemma \cite[Chapter III]{Braess07} and the Lipschitz continuity of $\sigma$ to get
	\begin{align*}
		\sigma_\circ \|\nabla(\varphi^1_m-\varphi^2_m)\|_{L_2(\Omega)} &\leq \|(\sigma(u^1_m)- \sigma(u^2_m))\nabla  \varphi^1_m\|_{L_2(\Omega)} \\&\leq C\|\tilde u^1_m- \tilde u^2_m\|_{L_2(\Omega)}\|\nabla  \varphi^1_m\|_{L_\infty(\Omega)} \leq C_m\|\alpha^1-\alpha^2\|_{\R^M},
	\end{align*}
	which means that the mapping $S$ is Lipschitz continuous. Here we have used that the boundary data is identical for the two instances, that is, $u^1_m-u^2_m = \tilde u^1_m-\tilde u^2_m$ and $\varphi^1_m-\varphi^2_m=\tilde \varphi^1_m-\tilde \varphi^2_m$.
	
	Now, for $F_i$ we have for $i=1,...,M$
	\begin{align*}
		|F_i(\alpha^1(t),t) - F_i(\alpha^2(t),t)| &\leq |\langle \sigma(u^1_m)\lceil \tilde \varphi^1_m \rceil \nabla \varphi^1_m - \sigma(u^2_m)\lceil \tilde \varphi^2_m \rceil \nabla \varphi^2_m, \nabla \lambda_i \rangle| \\&\quad+ |\langle (\sigma(u^1_m) \nabla \varphi^1_m -\sigma(u^2_m) \nabla \varphi^2_m) \cdot \nabla g_\varphi, \lambda_i\rangle|.
	\end{align*}
Note that $\sigma$ and the cut-off functional $\lceil \cdot \rceil$ are Lipschitz continuous and bounded. Furthermore, the image of $S(\cdot)$ is bounded owing to Lemma~\ref{semi_bounds}. Using this, together with the fact that the product of bounded Lipschitz continuous functions is Lipschitz continuous, we prove the Lipschitz continuity of $F$ in $t$;
	\begin{align*}
		\|F(\alpha^1(t),t) - F(\alpha^2(t),t)\|_{\R^M} \leq C_m\|\alpha^1(t)-\alpha^2(t)\|_{\R^M},
	\end{align*}
	where $C_m$ is a generic constant that does not depend on $t$. 
	
Picard-Lindel\"{o}f's theorem gives existence and uniqueness on some maximal interval $(\beta_1,\beta_2)$. If $(\beta_1,\beta_2)$ is a strict subset $(0,T)$ then by \cite[Theorem~7.6]{Amann90} either
	\begin{align*}
		\lim_{t \rightarrow \beta_1^{+}} \|\alpha(t)\|_{\R^M} = \infty, \qquad \text{or} \qquad \lim_{t \rightarrow \beta_2^{-}} \|\alpha(t)\|_{\R^M} = \infty,
	\end{align*}  
which contradicts Lemma~\ref{semi_bounds}. Hence there exist a unique solution in $C^1(0,T;V^u_m)\times L_\infty(0,T;V^\varphi_m)$.
	
	Finally, we consider the case when $D_t g_u(x,\cdot)$ and $g_\varphi(x,\cdot)$ have at most finitely many discontinuities as specified in \ref{ass_g}. We may then use Picard-Lindel\"{o}f's theorem on the sub-interval $[t_{k},t_{k+1})$ with the initial data $\alpha(t_{k}) = \lim_{t\rightarrow t_{k}^{-}} \alpha(t)$. The existence and uniqueness on $[0,T]$ now follows by induction over $k$.
\end{proof}

\subsection{Convergence of semidiscrete solutions}
The following lemma will be used several times in the convergence analysis in the subsequent text. Recall that $\tilde \flat = \flat - g_\varphi$ if $\flat$ is a Greek letter and $\tilde \flat = \flat - g_u$ if $\flat$ is a Latin letter.

\begin{lemma} \label{lem_second_equation}
	Consider a sequence $\{ \tilde \vv_m \}_m$ which converges pointwise a.e.~to $\tilde \vv$ and a sequence $\{\tilde \psi_m \}_m$ which converges weakly in $L_2(0,T;H^1(\Omega;\Gamma^\varphi_D))$ to $\tilde \psi$, and the corresponding sequences $\{ \vv_m \}_m$ and $\{\psi_m \}_m$ that converges to $\vv$ and $\psi$, respectively. Suppose that, for all $m \in \mathbb N$,
	\begin{align} \label{second_equation}
	\int_0^T \langle \sigma(\vv_m) \nabla \psi_m, \nabla \tilde \psi \rangle \intd t = \int_0^T \langle \sigma(\vv_m) \nabla \psi_m, \nabla \tilde \psi_m \rangle \intd t = 0. 
	\end{align}
	Then $\tilde \psi_m \to \tilde \psi$ strongly in $L_2(0,T;H^1(\Omega;\Gamma^\varphi_D))$ as $m \to \infty$. Furthermore, subsequences of \[\sigma(\vv_m) \nabla \psi_m \quad \text{and} \quad \sigma(\vv_m)\lceil\tilde \psi_m\rceil\nabla \psi_m\] converge strongly in $L_2(0,T;L_2(\Omega; \mathbb{R}^3))$ to $\sigma(\vv) \nabla \psi$ and $\sigma(\vv)\lceil\tilde \psi\rceil\nabla \psi$, respectively.
\end{lemma}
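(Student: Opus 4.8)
\emph{Proof proposal.} The plan is to run the classical coercivity argument for Galerkin approximations of a linear elliptic problem with variable coefficient, carried out in the space--time space $L_2(0,T;L_2(\Omega;\R^3))$, after peeling off the fixed affine part $\nabla g_\varphi$ so that \eqref{second_equation} can be used as a Galerkin orthogonality. First I would collect the elementary consequences of the hypotheses: since $\tilde \vv_m \to \tilde \vv$ a.e.\ and $g_u$ is fixed, $\vv_m \to \vv$ a.e., so by continuity and boundedness of $\sigma$ (assumption \ref{ass_sigma}) $\sigma(\vv_m)\to\sigma(\vv)$ a.e.\ with $\sigmalow \le \sigma(\vv_m)\le\sigmaup$; the map $v\mapsto\nabla v$ is bounded linear, hence weakly continuous, from $L_2(0,T;H^1(\Omega;\Gamma^\varphi_D))$ to $L_2(0,T;L_2(\Omega;\R^3))$, so $\nabla\tilde\psi_m \rightharpoonup \nabla\tilde\psi$ and therefore $\nabla\psi_m = \nabla\tilde\psi_m + \nabla g_\varphi \rightharpoonup \nabla\psi$; and the products $\sigma(\vv_m)\nabla g_\varphi$ and $\sigma(\vv_m)\nabla\tilde\psi$ converge \emph{strongly} in $L_2(0,T;L_2(\Omega;\R^3))$ to $\sigma(\vv)\nabla g_\varphi$ and $\sigma(\vv)\nabla\tilde\psi$, by dominated convergence with the fixed $L_2(0,T;L_2)$-majorants $\sigmaup|\nabla g_\varphi|$ (in $L_2$ by \ref{ass_g} and boundedness of $\Omega$) and $\sigmaup|\nabla\tilde\psi|$ (as $\tilde\psi\in L_2(0,T;H^1)$).

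Next I would expand $E_m := \int_0^T \langle \sigma(\vv_m)\nabla(\tilde\psi_m-\tilde\psi),\nabla(\tilde\psi_m-\tilde\psi)\rangle\intd t$, writing the first argument as $\nabla\tilde\psi_m-\nabla\tilde\psi$ to split $E_m = \int_0^T\langle\sigma(\vv_m)\nabla\tilde\psi_m,\nabla(\tilde\psi_m-\tilde\psi)\rangle\intd t - \int_0^T\langle\sigma(\vv_m)\nabla\tilde\psi,\nabla(\tilde\psi_m-\tilde\psi)\rangle\intd t$. Subtracting the two identities in \eqref{second_equation} gives $\int_0^T\langle\sigma(\vv_m)\nabla\psi_m,\nabla(\tilde\psi_m-\tilde\psi)\rangle\intd t = 0$; inserting $\nabla\tilde\psi_m=\nabla\psi_m-\nabla g_\varphi$ turns the first integral into $-\int_0^T\langle\sigma(\vv_m)\nabla g_\varphi,\nabla(\tilde\psi_m-\tilde\psi)\rangle\intd t$, which tends to $0$ as it pairs a strongly convergent sequence in $L_2(0,T;L_2)$ with $\nabla(\tilde\psi_m-\tilde\psi)\rightharpoonup 0$; the second integral tends to $0$ by the same strong--weak pairing. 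Hence $E_m\to0$, and since $\sigma(\vv_m)\ge\sigmalow$ together with the Poincar\'e--Friedrichs inequality on $H^1(\Omega;\Gamma^\varphi_D)$ (valid by \ref{ass_omega}, $\meas(\Gamma^\varphi_D)>0$) gives $c\,\|\tilde\psi_m-\tilde\psi\|^2_{L_2(0,T;H^1(\Omega;\Gamma^\varphi_D))}\le E_m$, we obtain $\tilde\psi_m\to\tilde\psi$ strongly in $L_2(0,T;H^1(\Omega;\Gamma^\varphi_D))$; in particular $\nabla\psi_m\to\nabla\psi$ strongly in $L_2(0,T;L_2(\Omega;\R^3))$.

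For the last statement I would pass to a subsequence along which $\tilde\psi_m\to\tilde\psi$ a.e.; then $\lceil\tilde\psi_m\rceil\to\lceil\tilde\psi\rceil$ a.e., because the cut-off map is $1$-Lipschitz in its argument and satisfies $a-g_\varphi\le\lceil\cdot\rceil\le b-g_\varphi$, hence is bounded by a constant. Then I split $\sigma(\vv_m)\nabla\psi_m-\sigma(\vv)\nabla\psi = \sigma(\vv_m)(\nabla\psi_m-\nabla\psi) + (\sigma(\vv_m)-\sigma(\vv))\nabla\psi$: the first summand has $L_2(0,T;L_2)$-norm at most $\sigmaup\|\nabla\psi_m-\nabla\psi\|_{L_2(0,T;L_2)}\to0$, and the second tends to $0$ in $L_2(0,T;L_2)$ by dominated convergence with majorant $2\sigmaup|\nabla\psi|$. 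The identical splitting with the extra bounded factor $\lceil\tilde\psi_m\rceil$ (resp.\ $\lceil\tilde\psi\rceil$) handles $\sigma(\vv_m)\lceil\tilde\psi_m\rceil\nabla\psi_m$.

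The main obstacle is conceptual rather than computational: recognising that \eqref{second_equation} is exactly the Galerkin orthogonality needed to annihilate the cross terms once the affine shift $\nabla g_\varphi$ has been separated out, and then being disciplined about the fact that every coefficient multiplying a merely weakly convergent gradient must first be upgraded to \emph{strong} $L_2(0,T;L_2)$ convergence --- via the $L_\infty$ bound on $\sigma$ and fixed $L_2$ majorants --- before the strong--weak products may legitimately be passed to the limit.
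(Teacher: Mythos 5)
Your proof is correct and follows essentially the same route as the paper: expand the weighted energy $\int_0^T\langle\sigma(\vv_m)\nabla(\tilde\psi_m-\tilde\psi),\nabla(\tilde\psi_m-\tilde\psi)\rangle\intd t$, use \eqref{second_equation} as a Galerkin orthogonality after separating off $\nabla g_\varphi$, upgrade $\sigma(\vv_m)$ times fixed $L_2$ functions to strong convergence by dominated convergence, pass to the limit via strong--weak pairings, and conclude with coercivity and Poincar\'e--Friedrichs. The only cosmetic differences are that you subtract the two identities in \eqref{second_equation} to kill the cross term in one stroke where the paper tracks three terms $I$, $II$, $III$ separately, and that for the final subsequence claim you spell out the splitting that the paper delegates to the generalized dominated convergence theorem.
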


\begin{proof}
	The dominated convergence theorem implies that $\sigma(\vv_{m})\nabla \tilde \psi \rightarrow \sigma(\vv)\nabla \tilde \psi$ converges strongly in $L_2(0,T;L_2(\Omega; \mathbb{R}^3)) \simeq L_2( (0,T) \times \Omega; \mathbb{R}^3)$, as $|\sigma(\vv_{m}) \, \partial_i \psi| \le \sigma^\circ \| \nabla \psi \|$ for all $i, m$.
	
	Because in $L_2(0,T;L_2(\Omega))$ the scalar product of a bounded and weakly convergent sequence and a strongly convergent sequence converges to the scalar product of the limits,
	\begin{align} \label{conv_result_phi2}
	0 = & \int_0^T\langle\sigma(\vv_{m}) \nabla \psi_{m}, \nabla \tilde \psi \rangle \intd t = \int_0^T\langle \nabla \psi_{m}, \sigma(\vv_{m}) \nabla \tilde \psi \rangle \intd t\\
	\to &  \int_0^T\langle \sigma(\vv) \nabla \psi, \nabla \tilde \psi \rangle \intd t, \quad \text{as} \quad m \to \infty. \notag
	\end{align}
	In particular, $\int_0^T\langle \sigma(\vv) \nabla \psi, \nabla \tilde \psi \rangle \intd t = 0$. To prove strong convergence of $\tilde \psi_{m}$ we write
	\begin{align*}
	0 &\leq \int_0^T \langle \sigma(\vv_{m}) \nabla (\tilde \psi - \tilde \psi_{m}), \nabla (\tilde \psi - \tilde \psi_{m}) \rangle \intd t \\&= \int_0^T \big(\langle \sigma(\vv_{m}) \nabla \tilde \psi , \nabla \tilde \psi \rangle - 2\langle \sigma(\vv_{m}) \nabla \tilde \psi_{m}, \nabla \tilde \psi \rangle + \langle \sigma(\vv_{m}) \nabla \tilde \psi_{m}, \nabla \tilde \psi_{m} \rangle\big) \intd t\\
	&=: I + II + III.
	\end{align*}
	Using the strong convergence of $\sigma(\vv_{m})\nabla \tilde \psi$ we get
	\begin{align*}
	I \rightarrow \int_0^T \langle \sigma(\vv) \nabla \tilde \psi , \nabla \tilde \psi \rangle \intd t = - \int_0^T \langle \sigma(\vv) \nabla g_\varphi , \nabla \tilde \psi \rangle \intd t,
	\end{align*}
	and due to \eqref{conv_result_phi2} we have
	\begin{align*}
	II \rightarrow -2\int_0^T \langle \sigma(\vv) \nabla \tilde\psi , \nabla\tilde\psi \rangle \intd t = 2 \int_0^T \langle \sigma(\vv) \nabla g_\varphi , \nabla\tilde\psi \rangle \intd t.
	\end{align*}
	Now, due to \eqref{second_equation} the third term gives
	\begin{align*}
	III = -\int_0^T \langle \sigma(\vv_{m}) \nabla g_\varphi , \nabla \tilde\psi_m \rangle \intd t \rightarrow -\int_0^T \langle \sigma(\vv) \nabla g_\varphi , \nabla \tilde\psi \rangle \intd t,
	\end{align*}
	since, due to the dominated convergence theorem, $\sigma(\vv_{m}) \nabla g_\varphi \to \sigma(\vv) \nabla g_\varphi$ strongly in $L_2(0,T;L_2(\Omega))$ and $\nabla \tilde \psi_m$ converges weakly. Thus, we conclude that \\ $I+II+III \rightarrow 0$ and, by applying a Poincar\'{e}-Friedrichs inequality, that $\tilde \psi_{m}$ converges strongly in $L_2(0,T;H^1(\Omega;\Gamma^\varphi_D))$, and, by passing to a subsequence, also pointwise a.e. 
	
	Finally, by the dominated convergence theorem in the form of \cite[p.~270]{Royden88}, $\sigma(\vv_{m_k}) \nabla \psi_{m_k}$ and $\sigma(\vv_{m_k})\lceil\tilde \psi_{m_k}\rceil\nabla \psi_{m_k}$ converge strongly in $L_2(0,T;L_2(\Omega; \mathbb{R}^3))$,\\ where $m_k$ denotes a subsequence.
\end{proof}

\begin{thm}\label{semi_conv_galerkin}
A subsequence of solutions $(\tilde u_{m_k},\tilde \varphi_{m_k})\in X_{m_k}$ of \eqref{semi_weak} converges strongly in $X$ to a solution $(\tilde u, \tilde \varphi)$ of \eqref{weak}.
\end{thm}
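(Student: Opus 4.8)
The plan is to extract a convergent subsequence from the uniform a priori bounds of Lemma~\ref{semi_bounds}, identify the limit as a solution of \eqref{weak}, and then upgrade the modes of convergence to strong convergence in $X$. By Lemma~\ref{semi_bounds}, $\{\tilde u_m\}$ is bounded in $L_2(0,T;H^1(\Omega;\Gamma^u_D))$ with $\{D_t\tilde u_m\}$ bounded in $L_2(0,T;H^1(\Omega;\Gamma^u_D)^\ast)$, hence also (using \ref{ass_intial}, $g_u\in C([0,T];L_2(\Omega))$, and the Lions--Aubin--Simon embedding into $C([0,T];L_2(\Omega))$) bounded in $C([0,T];L_2(\Omega))$, and $\{\tilde\varphi_m\}$ is bounded in $L_\infty(0,T;H^1(\Omega;\Gamma^\varphi_D))$ by \eqref{semi_bound_phi}. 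Since $\Omega$ is a bounded Lipschitz domain (\ref{ass_omega}), $H^1(\Omega;\Gamma^u_D)\hookrightarrow\hookrightarrow L_2(\Omega)\hookrightarrow H^1(\Omega;\Gamma^u_D)^\ast$ is a Gelfand triple with compact first embedding, so Aubin--Lions--Simon yields a subsequence, still denoted $m_k$, with $\tilde u_{m_k}\to\tilde u$ strongly in $L_2(0,T;L_2(\Omega))$ and (after a further extraction) pointwise a.e., $\tilde u_{m_k}\rightharpoonup\tilde u$ weakly in $L_2(0,T;H^1(\Omega;\Gamma^u_D))$, $D_t\tilde u_{m_k}\rightharpoonup D_t\tilde u$ weakly in $L_2(0,T;H^1(\Omega;\Gamma^u_D)^\ast)$, $\tilde u_{m_k}(T)\rightharpoonup\tilde u(T)$ weakly in $L_2(\Omega)$, and $\tilde\varphi_{m_k}\rightharpoonup\tilde\varphi$ weakly in $L_2(0,T;H^1(\Omega;\Gamma^\varphi_D))$. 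Moreover $u_{m_k}(0)=g_u(0)+P_{m_k}(u_0-g_u(0))\to u_0$ strongly in $L_2(\Omega)$, where $P_{m_k}$ is the $L_2$-projection onto $V^u_{m_k}$ and $\bigcup_m V^u_m$ is dense in $L_2(\Omega)$.

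Next I would pass to the limit in \eqref{semi_weak_phi}. Testing it with $w=\tilde\varphi_{m_k}\in V^\varphi_{m_k}$ gives the second equality of \eqref{second_equation}; testing with $w=P^\varphi_{m_k}\tilde\varphi$, the $H^1$-projection of the limit (which converges to $\tilde\varphi$ strongly in $L_2(0,T;H^1)$ by density), gives the first equality up to an error bounded by $\sigmaup\|\nabla\varphi_{m_k}\|_{L_2(0,T;L_2(\Omega))}\,\|\nabla(\tilde\varphi-P^\varphi_{m_k}\tilde\varphi)\|_{L_2(0,T;L_2(\Omega))}\to 0$; since the proof of Lemma~\ref{lem_second_equation} only uses the resulting limit, it applies and yields $\tilde\varphi_{m_k}\to\tilde\varphi$ strongly in $L_2(0,T;H^1(\Omega;\Gamma^\varphi_D))$ (hence a subsequence pointwise a.e.), the identity \eqref{weakphi}, and strong convergence in $L_2(0,T;L_2(\Omega;\R^3))$ of subsequences of the fluxes $\sigma(u_{m_k})\nabla\varphi_{m_k}\to\sigma(u)\nabla\varphi$ and $\sigma(u_{m_k})\lceil\tilde\varphi_{m_k}\rceil\nabla\varphi_{m_k}\to\sigma(u)\lceil\tilde\varphi\rceil\nabla\varphi$. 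With these in hand, passing to the limit in \eqref{semi_weak_u} is routine: fix $v\in\bigcup_m V^u_m$ and $\chi\in C^\infty_c(0,T)$, test against $v\chi$, and let $k\to\infty$ — the first two terms pass by weak convergence of $D_t\tilde u_{m_k}$ and of $\nabla\tilde u_{m_k}$, the cut-off term by the strong flux convergence (the step using the uniform $L_\infty$-bound on $\lceil\tilde\varphi_{m_k}\rceil$ from Remark~\ref{no_disc_max} in place of a discrete maximum principle), and the last term by the strong flux convergence together with $\nabla g_\varphi\in L_\infty(0,T;L_3(\Omega))$ (from \ref{ass_g}) and $H^1(\Omega)\hookrightarrow L_6(\Omega)$. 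Density of $\bigcup_m V^u_m$ in $H^1(\Omega;\Gamma^u_D)$, the boundedness of the right-hand side in $L_2(0,T;H^1(\Omega;\Gamma^u_D)^\ast)$ (Lemma~\ref{weak_forms_eq}), and the arbitrariness of $\chi$ then give \eqref{weaku} for a.e.\ $t$. For \eqref{weakintial} I would integrate \eqref{weaku} and the tested \eqref{semi_weak_u} by parts in time against $v\chi$ with $\chi(T)=0$, $\chi(0)=1$, and compare the limits using $u_{m_k}(0)\to u_0$ to obtain $\langle u(0)-u_0,v\rangle=0$ on a dense set, hence $u(0)=u_0$ in $L_2(\Omega)$. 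At this point $(\tilde u,\tilde\varphi)\in X$ solves \eqref{weak}.

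It remains to promote the convergences to strong convergence in $X$; strong convergence of $\tilde\varphi_{m_k}$ in $L_2(0,T;H^1(\Omega;\Gamma^\varphi_D))$ is already in hand. For $\tilde u_{m_k}$ I would run an energy/$\limsup$ argument: testing \eqref{semi_weak_u} with $v=\tilde u_{m_k}$ and integrating over $(0,T)$, using $\langle D_t\tilde u_{m_k},\tilde u_{m_k}\rangle=\tfrac12 D_t\|\tilde u_{m_k}\|^2_{L_2(\Omega)}$, yields a formula for $\int_0^T\|\nabla\tilde u_{m_k}\|^2_{L_2(\Omega)}\intd t$; taking $\limsup_k$, the boundary term $-\tfrac12\|\tilde u_{m_k}(T)\|^2_{L_2(\Omega)}$ is bounded from above by weak lower semicontinuity, $\tfrac12\|\tilde u_{m_k}(0)\|^2_{L_2(\Omega)}$ converges, and the remaining terms pass by weak convergence of $\nabla\tilde u_{m_k}$, the strong flux convergences, and strong convergence of $\tilde u_{m_k}$ in $L_2(0,T;L_q(\Omega))$ for $q<6$ (interpolation); the upper bound obtained equals $\int_0^T\|\nabla\tilde u\|^2_{L_2(\Omega)}\intd t$ (obtained by testing \eqref{weaku} with $v=\tilde u$ and integrating), so $\limsup_k\int_0^T\|\nabla\tilde u_{m_k}\|^2_{L_2(\Omega)}\le\int_0^T\|\nabla\tilde u\|^2_{L_2(\Omega)}\le\liminf_k\int_0^T\|\nabla\tilde u_{m_k}\|^2_{L_2(\Omega)}$, and with the Poincar\'e--Friedrichs inequality this gives $\tilde u_{m_k}\to\tilde u$ strongly in $L_2(0,T;H^1(\Omega;\Gamma^u_D))$. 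Finally, for $D_t\tilde u_{m_k}$ I would write $D_t\tilde u_{m_k}=P_{m_k}^\ast R_{m_k}$, where $R_{m_k}\in L_2(0,T;H^1(\Omega;\Gamma^u_D)^\ast)$ is the functional $v\mapsto-\langle\nabla u_{m_k},\nabla v\rangle-\langle\sigma(u_{m_k})\lceil\tilde\varphi_{m_k}\rceil\nabla\varphi_{m_k},\nabla v\rangle+\langle\sigma(u_{m_k})\nabla\varphi_{m_k}\cdot\nabla g_\varphi,v\rangle-\langle D_t g_u,v\rangle$ and $P_{m_k}^\ast$ is the Banach adjoint of $P_{m_k}$ (using $D_t\tilde u_{m_k}\in V^u_{m_k}$ and $L_2$-orthogonality). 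By the strong convergences just established, $R_{m_k}\to R:=D_t\tilde u$ strongly in $L_2(0,T;H^1(\Omega;\Gamma^u_D)^\ast)$, while $\|P_{m_k}^\ast\|=\|P_{m_k}\|$ is uniformly bounded by \ref{ass_proj}; since on the dense subspace $L_2(\Omega)\subset H^1(\Omega;\Gamma^u_D)^\ast$ the operator $P_{m_k}^\ast$ acts as $P_{m_k}$ and converges strongly, a uniform-boundedness/density argument (applied in the Bochner space) shows $P_{m_k}^\ast\to I$ strongly on $L_2(0,T;H^1(\Omega;\Gamma^u_D)^\ast)$, whence $D_t\tilde u_{m_k}=P_{m_k}^\ast(R_{m_k}-R)+P_{m_k}^\ast R\to D_t\tilde u$ strongly, and the proof is complete.

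The main obstacle is the critical term $\sigma(u_m)|\nabla\varphi_m|^2$: a direct energy estimate provides only $L^1$ control of $|\nabla\varphi_m|^2$, so the decisive point is the strong $L_2(0,T;L_2(\Omega))$ convergence of the flux $\sigma(u_m)\nabla\varphi_m$, which is supplied by Lemma~\ref{lem_second_equation} via a Minty-type use of the Galerkin orthogonality for $\varphi$; it is precisely the uniform $L_\infty$-bound on $\lceil\tilde\varphi_m\rceil$ (Remark~\ref{no_disc_max}) that then allows passage to the limit in $\sigma(u_m)\lceil\tilde\varphi_m\rceil\nabla\varphi_m$ without any discrete maximum principle. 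The second genuinely non-routine point is the strong convergence of $D_t\tilde u_m$ in the dual norm, and it is there that assumption \ref{ass_proj} is indispensable.
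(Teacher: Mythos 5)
Your proposal is correct and follows essentially the same route as the paper: a priori bounds plus Aubin--Lions for compactness, Lemma~\ref{lem_second_equation} for the strong convergence of the fluxes, density of the Galerkin spaces to pass to the limit in the equations, an energy identity for the strong $L_2(0,T;H^1)$ convergence of $\tilde u$, and the $H^1$-stability of the $L_2$-projection \ref{ass_proj} for the time derivative. Your extra care in verifying the hypothesis of Lemma~\ref{lem_second_equation} via a projected test function, and in treating the term $\|\tilde u_{m_k}(T)\|^2_{L_2(\Omega)}$ by a $\limsup$/weak-lower-semicontinuity argument rather than passing to the limit in $\int_0^T\langle D_t\tilde u_{m_k},\tilde u_{m_k}\rangle\intd t$ directly, are welcome refinements of steps the paper treats more briefly.
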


\begin{proof}
Owing to Lemma~\ref{semi_bounds} and the reflexivity of $X$, there exists a subsequence $m_k$ and $(\tilde \vv, \tilde \psi) \in X$ such that
	\begin{align*}
	(\tilde u_{m_k}, \tilde \varphi_{m_k}) \rightharpoonup (\tilde \vv, \tilde \psi) \quad \text{in } X, \text{ as $k\rightarrow \infty$}.
	\end{align*} 
Because initial conditions are $L_2$-projected onto $V_m^u$ it follows that $\tilde u_{m_k}(0) \to \tilde \vv(0) = \tilde u(0)$ in $L_2(\Omega)$. 

The compactness of the embedding (Aubin-Lions lemma) $$L_2(0,T;H^1(\Omega;\Gamma^u_D))\cap H^1(0,T;H^1(\Omega;\Gamma^u_D)^\ast) \hookrightarrow L_2(0,T;L_2(\Omega)),$$ implies that there exist a subsequence, still denoted $m_k$, such that $\tilde u_{m_k}\rightarrow \tilde \vv$ strongly and pointwise a.e.~in $L_2(0,T;L_2(\Omega))$.

Owing to Lemma \ref{lem_second_equation} we can pass to subsequences, without change of notation, so that $\varphi_{m_k}$, $\nabla \varphi_{m_k}$, $\sigma(u_{m_k}) \nabla \varphi_{m_k}$ and $\sigma(u_{m_k})\lceil\tilde \varphi_{m_k}\rceil\nabla \varphi_{m_k}$ converge strongly in~$L_2(0,T,L_2(\Omega))$. Now choose $v(t)\in L_2(0,T;V^u_{m_k})$ in \eqref{semi_weak_u}. Integrating from $0$ to $T$ gives, 
\begin{align*}
\int_0^T\langle D_t u_{m_k}, v \rangle + \langle \nabla u_{m_k}, \nabla  v \rangle \intd t&= \int_0^T\big(-\langle \sigma(u_{m_k})\lceil \tilde \varphi_{m_k} \rceil \nabla \varphi_{m_k}, \nabla v \rangle \\&\quad+ \langle \sigma(u_{m_k}) \nabla \varphi_{m_k} \cdot \nabla g_\varphi,  v\rangle\big) \intd t. \notag
\end{align*}
Fixing $v$ we may now let $k \to \infty$ to get
that
\begin{align*}
\int_0^T\langle D_t \vv,  v \rangle + \langle \nabla \vv, \nabla  v \rangle \intd t&= \int_0^T-\langle \sigma(\vv)\lceil \tilde \psi \rceil \nabla \psi, \nabla  v \rangle + \langle \sigma(\vv) \nabla \psi \cdot \nabla g_\varphi,  v\rangle\intd t, \notag
\end{align*}
which holds for all $v \in L_2(0,T;H^1(\Omega;\Gamma^u_D))$, since $\cup_{k\in \N} L_2(0,T;V^u_{m_k})$ is dense in this space and $u_{m,k},\varphi_{m,k}$ are bounded according to Lemma~\ref{semi_bounds}, see \cite[Theorem~3,~p.~121]{Yosida95}. In the spirit of Lemma~\ref{lem_second_equation} we may also prove that $\tilde \psi$ satisfies \eqref{weakphi}. This together with the convergence of initial conditions imply that the limit $(\tilde \vv, \tilde \psi)$ is a solution to~\eqref{weak}.
	
	To prove that $\{ \tilde u_{m_k} \}_k$ converges strongly in $L_2(0,T;H^1(\Omega;\Gamma^u_D))$, we write
	\begin{align*}
		&\int_0^T\langle \nabla(\tilde \vv-\tilde u_{m_k}), \nabla(\tilde \vv-\tilde u_{m_k}) \rangle \intd t \\&\quad= \int_0^T \big(\langle \nabla\tilde \vv, \nabla\tilde \vv \rangle -2\langle \nabla\tilde \vv, \nabla\tilde u_{m_k} \rangle +\langle \nabla\tilde u_{m_k}, \nabla\tilde u_{m_k}\rangle\big)  \intd t =: I + II + III.
	\end{align*}
Then $II \rightarrow -2\int_0^T \langle \nabla\tilde \vv, \nabla\tilde \vv \rangle \intd t$ since $\tilde u_{m_k}$ converges weakly in $L_2(0,T;H^1(\Omega;\Gamma^u_D))$. For the third term we use \eqref{semi_weak_u} to get
	\begin{align*}
		III &= \int_0^T \big(\langle D_t \tilde u_{m_k}, \tilde u_{m_k} \rangle - \langle D_t g_u, \tilde u_{m_k} \rangle  -  \langle \nabla g_u, \nabla \tilde u_{m_k} \rangle \\&\quad- \langle \sigma(u_{m_k})\lceil\tilde \varphi_{m_k}\rceil \nabla \varphi_{m_k}, \nabla\tilde u_{m_k} \rangle  +  \langle \sigma(u_{m_k})\nabla \varphi_{m_k}\cdot \nabla g_\varphi, \tilde u_{m_k} \rangle \big) \intd t\\
		&\to \int_0^T \big(\langle D_t \tilde \vv, \tilde \vv \rangle - \langle D_t g_u, \tilde \vv \rangle  -  \langle \nabla g_u, \nabla \tilde \vv \rangle \\&\quad- \langle \sigma(\vv)\lceil\tilde \psi\rceil \nabla \psi, \nabla \tilde \vv \rangle  +  \langle \sigma(\vv)\nabla \psi \cdot \nabla g_\varphi, \tilde \vv\rangle \big) \intd t
	\end{align*}
as $k \to \infty$, recalling that $D_t \tilde u_{m_k}$ converges weakly, $u_{m_k}$ strongly, and the statement of Lemma \ref{lem_second_equation}. Now, \eqref{weaku} gives $III = \int_0^T  \langle \nabla\tilde \vv, \nabla\tilde \vv \rangle \intd t$ so that $I+II+III \rightarrow 0$.

Finally, to show strong convergence of the time derivative, we note that
\begin{align*}
\| D_t \tilde \vv - & D_t \tilde{u}_{m_k} \|_{H^1(\Omega; \Gamma_D^u)^\ast} \\& \leq \sup_{v \in H^1(\Omega;\Gamma^u_D) \atop v \neq 0} \frac{\langle D_t \tilde \vv - P_{m_k}D_t \tilde\vv, v \rangle}{\| v\|_{H^1(\Omega)}} + \sup_{v \in H^1(\Omega;\Gamma^u_D) \atop v \neq 0} \frac{\langle P_{m_k}D_t \tilde \vv - D_t \tilde{u}_{m_k}, v \rangle}{\| v\|_{H^1(\Omega)}} \\&=: a_{m_k} + b_{m_k},
\end{align*}
where $P_{m_k}$ is the $L_2$-projection onto $V^u_m$. It follows, due to the density of $L_2$ in~$(H^1)^*$, that $\int_0^T a^2_{m_k} \intd t \to 0$ as $k \to \infty$. Now, we use the self-adjointness of the $L_2$-projection, \eqref{weaku}, and \eqref{semi_weak_u} to get $\int_0^Tb_{m_k}^2\intd t\to 0$ as $k \to 0$ since $\nabla u_{m_k}$, $\sigma(u_{m_k})\lceil \tilde \varphi_{m_k} \rceil \nabla \varphi_{m_k}$, and $\sigma(u_{m_k})\nabla \varphi_{m_k}\cdot \nabla g_\varphi$ converge strongly in $L_2(0,T;L_2(\Omega))$. We conclude that $\| D_t \tilde \vv - D_t \tilde{u}_{m_k} \|_{L_2(0,T;H^1(\Omega; \Gamma_D^u)^\ast)} \to 0.$

\end{proof}

\begin{corollary}\label{cor_semi_conv}
If the solution $(u,\varphi)$ to \eqref{weak} is unique, then the full sequence of Galerkin solutions $(u_m,\varphi_m)\in X_m$ converges.
\end{corollary}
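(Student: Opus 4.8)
The plan is to deduce the convergence of the full sequence from the already-established subsequence convergence in Theorem~\ref{semi_conv_galerkin} by a standard Urysohn-type subsequence argument, exploiting uniqueness of the limit. First I would argue by contradiction: suppose the full sequence $(u_m,\varphi_m)$ does \emph{not} converge in $X$ to the unique solution $(u,\varphi)$. Then there is an $\varepsilon>0$ and a subsequence $(u_{m_j},\varphi_{m_j})$ with $\|(\tilde u_{m_j},\tilde\varphi_{m_j}) - (\tilde u,\tilde\varphi)\|_X \ge \varepsilon$ for all $j$.

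Next I would apply the machinery of Theorem~\ref{semi_conv_galerkin} to this subsequence: since Lemma~\ref{semi_bounds} provides the same uniform bounds for every $m$, the sequence $(\tilde u_{m_j},\tilde\varphi_{m_j})$ is bounded in $X$, and hence by the reflexivity and Aubin--Lions compactness arguments used in that theorem, it has a further subsequence $(\tilde u_{m_{j_\ell}},\tilde\varphi_{m_{j_\ell}})$ that converges strongly in $X$ to some solution $(\tilde u^\ast,\tilde\varphi^\ast)$ of \eqref{weak}. By the assumed uniqueness of the solution to \eqref{weak}, $(u^\ast,\varphi^\ast) = (u,\varphi)$, so $(\tilde u_{m_{j_\ell}},\tilde\varphi_{m_{j_\ell}}) \to (\tilde u,\tilde\varphi)$ strongly in $X$. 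This contradicts $\|(\tilde u_{m_{j_\ell}},\tilde\varphi_{m_{j_\ell}}) - (\tilde u,\tilde\varphi)\|_X \ge \varepsilon$, completing the argument.

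The only point requiring a little care — and the closest thing to an obstacle — is that Theorem~\ref{semi_conv_galerkin} as stated extracts a subsequence from the \emph{full} index set; one must check that its proof goes through verbatim when starting from an arbitrary subsequence $\{m_j\}$ rather than from $\{m\}_{m\in\N}$. This is immediate: every ingredient (the bounds of Lemma~\ref{semi_bounds}, the reflexivity of $X$, the Aubin--Lions embedding, and Lemma~\ref{lem_second_equation}) is insensitive to which infinite index set one works over, since all constants are uniform in $m$. Thus the extraction of a strongly convergent sub-subsequence whose limit solves \eqref{weak} is valid along any subsequence, and the uniqueness hypothesis pins that limit down to $(u,\varphi)$. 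I would phrase the proof in exactly these terms, keeping it to a few lines.
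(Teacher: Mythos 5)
Your proposal is correct and is essentially the paper's own argument: the paper likewise notes that Lemma~\ref{semi_bounds} bounds the sequence in $X$, that the proof of Theorem~\ref{semi_conv_galerkin} shows every accumulation point is a solution of \eqref{weak} and that accumulation points exist, and then concludes from uniqueness that the whole sequence converges. Your contradiction/sub-subsequence phrasing just makes explicit the final logical step that the paper leaves implicit.
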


\begin{proof}
	Due to Lemma~\ref{semi_bounds} the sequence is bounded in $X$. From the proof of Theorem~\ref{semi_conv_galerkin} we deduce that any accumulation point of the sequence is a solution to \eqref{weak} and that an accumulation point exists. If the solution to \eqref{weak} is unique there can only be one accumulation point and, hence, the full sequence $\{(u_m,\varphi_m)\}_m$ must converge.
\end{proof}

\section{Fully discrete methods}\label{sec_discrete}
In this section we analyze fully discrete methods based on a backward Euler scheme in time and hierarchical families of finite dimensional subspaces, as introduced in Section~\ref{sec_semidiscrete}, in space. We prove existence and uniqueness of fully discrete solutions and strong convergence to a weak solution satisfying \eqref{weak}.

\subsection{Fully discrete formulation}
Let $\{J_l\}_{l\in \N }$ be a family of nested partitions of the time interval $J=[0,T]$, which subordinate to the decomposition of \ref{ass_g}. For each partition $0 = t_0 < t_1 < ... < t_N = T$ we denote the subintervals $I_n:=(t_{n-1},t_n]$ and $f^n:=f(t_n)$. We consider a uniform time discretization in the analysis, that is, we assume $t_n-t_{n-1}=\tau_l$ with $\tau_l=2^{-l}T$. It simplifies some of the analysis, but it is also a requirement for the compactness argument in \cite{Walkington10}.

Fix $m$ and let $V^u_m$ and $V^\varphi_m$ be as in Section~\ref{sec_semidiscrete} and define the discrete space
\begin{align*}
	X_{m,l} = &\{v(x,t): \forall n \ \exists w \in V^u_m: v(t,\cdot) = w, \ t\in I_n \} \\&\times \{v(x,t): \forall n \ \exists w \in V^\varphi_m: v(t,\cdot) = w, \ t\in I_n\}.
\end{align*}
This means that functions in $X_{m,l}$ are piecewise constant in time and on each interval equal to a function from $V^u_m \times V^\varphi_m$. Note that $X_{m,l}\not\subseteq X_m$ since $X_{m,l}$ discontinuous in time. However, we have $X_{m,l} \subseteq L_2(0,T; V^u_m) \times L_2(0,T; V^\varphi_m)$.

We use the backward Euler scheme to define a fully discrete solution. Find a pair $(u_{m,l},\varphi_{m,l})=(g_u + \tilde u_{m,l}, g_\varphi + \tilde \varphi_{m,l})$ such that $(\tilde u_{m,l}, \tilde \varphi_{m,l}) \in X_{m, l}$ and for $n=1,...,N$,
\begin{subequations}\label{full_weak}
	\begin{align}
	\left\langle\frac{ u^n_{m,l}-u^{n-1}_{m,l}}{\tau_l}, v \right\rangle + \langle \nabla u^n_{m,l}, \nabla v \rangle &= -\langle \sigma(u^n_{m,l})\lceil \tilde \varphi^n_{m,l} \rceil \nabla \varphi^n_{m,l}, \nabla v \rangle  \label{fully_weak_u}\\&\quad + \langle \sigma(u^n_{m,l}) \nabla \varphi^n_{m,l} \cdot \nabla g^n_\varphi, v\rangle, \notag\\
	\langle \sigma(u^n_{m,l}) \nabla \varphi^n_{m,l}, \nabla w \rangle &= 0, \label{fully_weak_phi}\\
	\langle u^0_{m,l}, z \rangle &= \langle u_0, z \rangle, \label{fully_weak_initial}
	\end{align}
\end{subequations}
for all $(v,w)\in V^u_m\times V^\varphi_m$ and $z \in V^u_m$, where $u^n_{m,l} = u_{m,l}(t_n)$ and $\varphi^n_{m,l} = \varphi_{m,l}(t_n)$. 


\begin{lemma}\label{fully_bounds}
	A solution $(u_{m,l}, \varphi_{m,l})$ to \eqref{full_weak} fulfils the following bounds
	\begin{align}
		\|\nabla \tilde \varphi^n_{m,l} \|^2_{L_2(\Omega)} &\leq C(\sigma, g_\varphi),\label{fully_bounds_1}\\
		\|\tilde u^n_{m,l}\|^2_{L_2(\Omega)} + \int_0^{t_n} \|\nabla \tilde u_{m,l}\|^2_{L_2(\Omega)} \intd t &\leq C(u_0,\sigma,g_u,D_t g_u, g_\varphi),\label{fully_bounds_2}\\
		\sum_{n=1}^N \|\tilde u^n_{m,l}-\tilde u^{n-1}_{m,l}\|^2_{L_2(\Omega)} &\leq C(u_0,\sigma,g_u,D_t g_u,g_\varphi).\label{fully_bounds_3}
	\end{align}
	for $n=0,...,N$.
\end{lemma}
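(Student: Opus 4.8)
The plan is to prove the three estimates in parallel with Lemma~\ref{semi_bounds}, replacing time integration by summation over the steps $I_n$ and the continuous energy identity $2\int_0^T\langle D_t\tilde u_m,\tilde u_m\rangle\intd t = \|\tilde u_m(T)\|^2-\|\tilde u_m(0)\|^2$ by its discrete counterpart $2\langle a-b,a\rangle = \|a\|^2-\|b\|^2+\|a-b\|^2$. Throughout one checks that the constants are independent of $m$, $l$ and $n$, which is automatic since no inverse or $m$-dependent estimates are used and the step size $\tau_l$ cancels under telescoping. For \eqref{fully_bounds_1} I would test \eqref{fully_weak_phi} with $w=\tilde\varphi^n_{m,l}\in V^\varphi_m$, so that $\langle\sigma(u^n_{m,l})\nabla\tilde\varphi^n_{m,l},\nabla\tilde\varphi^n_{m,l}\rangle = -\langle\sigma(u^n_{m,l})\nabla g^n_\varphi,\nabla\tilde\varphi^n_{m,l}\rangle$; coercivity $\sigma\ge\sigmalow$, boundedness $\sigma\le\sigmaup$ and Cauchy--Schwarz give $\|\nabla\tilde\varphi^n_{m,l}\|_{L_2(\Omega)} \le (\sigmaup/\sigmalow)\,\|\nabla g^n_\varphi\|_{L_2(\Omega)}$, and since $\Omega$ is bounded and $g_\varphi\in C_b([t_i,t_{i+1});W^1_3(\Omega))$ on the finitely many subintervals of \ref{ass_g}, the node value $\|\nabla g^n_\varphi\|_{L_2(\Omega)}$ is controlled by $\|g_\varphi\|_{L_\infty(0,T;W^1_3(\Omega))}\le C(g_\varphi)$.

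For \eqref{fully_bounds_2} and \eqref{fully_bounds_3} I would test \eqref{fully_weak_u} with $v=\tilde u^n_{m,l}\in V^u_m$, decompose $u^n_{m,l}-u^{n-1}_{m,l} = (g^n_u-g^{n-1}_u)+(\tilde u^n_{m,l}-\tilde u^{n-1}_{m,l})$, and apply the discrete identity to the second inner product, producing $\tfrac{1}{2\tau_l}\bigl(\|\tilde u^n_{m,l}\|^2-\|\tilde u^{n-1}_{m,l}\|^2+\|\tilde u^n_{m,l}-\tilde u^{n-1}_{m,l}\|^2\bigr)$. Multiplying by $\tau_l$ and summing from $1$ to $n$, the $\|\tilde u^j_{m,l}\|^2$ terms telescope to $\tfrac12(\|\tilde u^n_{m,l}\|^2-\|\tilde u^0_{m,l}\|^2)$, the difference terms accumulate to a multiple of the left-hand side of \eqref{fully_bounds_3}, and the stiffness contributions sum to $\int_0^{t_n}\|\nabla\tilde u_{m,l}\|^2_{L_2(\Omega)}\intd t$ since $\tilde u_{m,l}$ is piecewise constant in time. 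The right-hand side is then bounded exactly as in Lemma~\ref{semi_bounds}: the term $\sum_j\langle g^j_u-g^{j-1}_u,\tilde u^j_{m,l}\rangle = \sum_j\int_{I_j}\langle D_tg_u,\tilde u^j_{m,l}\rangle\intd t$ is estimated by duality and Cauchy--Schwarz in time against $\int_0^{t_n}\|D_tg_u\|^2_{H^1(\Omega;\Gamma^u_D)^\ast}\intd t$; the $\nabla g_u$ contribution against $\|g_u\|^2_{L_2(0,T;H^1(\Omega))}$; and the two $\varphi$-terms using $\sigma\le\sigmaup$, the uniform bound $a-g_\varphi\le\lceil\tilde\varphi^n_{m,l}\rceil\le b-g_\varphi$, the Sobolev embedding $H^1(\Omega)\hookrightarrow L_6(\Omega)$ in $\R^3$, $g_\varphi\in L_\infty(0,T;W^1_3(\Omega))$, and \eqref{fully_bounds_1}. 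A Poincar\'e--Friedrichs inequality on $H^1(\Omega;\Gamma^u_D)$ converts every $\|\tilde u^j_{m,l}\|_{H^1(\Omega)}$ into $C\|\nabla\tilde u^j_{m,l}\|_{L_2(\Omega)}$, so weighted Young inequalities absorb all $\tilde u$-dependent terms into $\int_0^{t_n}\|\nabla\tilde u_{m,l}\|^2_{L_2(\Omega)}\intd t$ with no discrete Gronwall needed; together with $\|\tilde u^0_{m,l}\|_{L_2(\Omega)}\le\|u_0\|_{L_2(\Omega)}+\|g_u(0)\|_{L_2(\Omega)}$ from \eqref{fully_weak_initial}, this yields \eqref{fully_bounds_2} and \eqref{fully_bounds_3}.

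The step needing the most attention is the discrete energy identity itself: in contrast to the semidiscrete setting, summation by parts leaves the non-negative remainder $\sum_j\|\tilde u^j_{m,l}-\tilde u^{j-1}_{m,l}\|^2$, and the only real point is to retain it on the left, whereupon \eqref{fully_bounds_3} drops out of the very same computation that gives \eqref{fully_bounds_2}. The remaining work is bookkeeping, modulo checking that the data evaluated at the nodes and the resulting Riemann-type sums in time are dominated by the global-in-time norms of the data; this is where the assumption that $\{J_l\}$ is subordinate to the decomposition of \ref{ass_g} is used.
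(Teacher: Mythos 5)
Your proposal is correct and follows essentially the same route as the paper: testing \eqref{fully_weak_phi} with $\tilde\varphi^n_{m,l}$, testing \eqref{fully_weak_u} with $\tau_l\tilde u^n_{m,l}$, using the discrete identity $2\langle a-b,a\rangle=\|a\|^2-\|b\|^2+\|a-b\|^2$ to retain the nonnegative increment sum on the left (which yields \eqref{fully_bounds_3} for free), telescoping, rewriting $g^n_u-g^{n-1}_u$ as $\int_{I_n}D_tg_u\intd t$, and absorbing the remaining terms exactly as in Lemma~\ref{semi_bounds}. No gaps.
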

\begin{proof}
	Choosing $w=\tilde \varphi^n_{m,l}$ in \eqref{fully_weak_phi} we have $\|\nabla \tilde \varphi^n_{m,l}\|^2_{L_2(\Omega)} \leq C(\sigma)\|g^n_\varphi\|_{L_2(\Omega)}$. To prove \eqref{fully_bounds_2}, we note that
	\begin{align*}
		\langle \tilde u^n_{m,l}- \tilde u^{n-1}_{m,l}, \tilde u^n_{m,l} \rangle = \frac{1}{2}\|\tilde u^n_{m,l}\|^2_{L_2(\Omega)}-\frac{1}{2}\|\tilde u^{n-1}_{m,l}\|^2_{L_2(\Omega)} + \frac{1}{2}\|\tilde u^n_{m,l}-\tilde u^{n-1}_{m,l}\|^2_{L_2(\Omega)},
	\end{align*}
	and by choosing $v = \tau_l\tilde u^n_{m,l}$ in \eqref{fully_weak_u} and summing from $n=1$ to $N$ 
	\begin{align}\label{bounds_u_fully}
		&\frac{1}{2}\sum_{n=1}^N\big(\|\tilde u^n_{m,l}\|^2_{L_2(\Omega)}-\|\tilde u^{n-1}_{m,l}\|^2_{L_2(\Omega)} + \|\tilde u^n_{m,l}-\tilde u^{n-1}_{m,l}\|^2_{L_2(\Omega)}\big) \\&\quad+ \frac{1}{2}\int_0^T\|\nabla \tilde u_{m,l}\|^2_{L_2(\Omega)} \intd t \notag \\&\quad=- \sum_{n=1}^N\langle g^n_u-g^{n-1}_u, \tilde u^n_{m,l} \rangle - \int_0^T \langle \nabla g_u, \nabla \tilde u_{m,l} \rangle \intd t \notag\\&\qquad - \int_0^T \langle \sigma(u_{m,l})\lceil\tilde \varphi_{m,l}\rceil \nabla \varphi_{m,l}, \nabla\tilde u_{m,l} \rangle \intd t \notag \\&\qquad+ \int_0^T \langle \sigma(u_{m,l})\nabla \varphi_{m,l}\cdot \nabla g_\varphi, \tilde u_{m,l} \rangle \intd t =: I + II + III + IV. \notag 
	\end{align}
	For the first term we get
	\begin{align*}
		\sum_{n=1}^N \int_{t_{n-1}}^{t_n} \!\! \langle D_t g_u, \tilde u ^n_{m,l} \rangle \intd t &\leq C \sum_{n=1}^N \int_{t_{n-1}}^{t_n} \!\! \|D_tg_u\|^2_{H^1(\Omega;\Gamma^u_D)^\ast} \intd t+ \frac{1}{8}\int_0^T \!\! \|\nabla \tilde u_{m,l}\|^2_{L_2(\Omega)} \intd t.
	\end{align*}
The remaining terms $II-IV$ can be estimated as in the proof of Lemma~\ref{semi_bounds}. Using the telescoping effect of the first two terms in the sum in \eqref{bounds_u_fully} completes the proof.
\end{proof}

\begin{lemma}\label{fully_existence_uniqueness}
	There exists a solution $(\tilde u_{m,l}, \tilde \varphi_{m,l})\in X_{m,l}$ to \eqref{full_weak}. Furthermore, for any fixed $m$, there is an $L \in \N$, such that the solution $(\tilde u_{m,l}, \tilde \varphi_{m,l})$ is unique for any $l>L$.
\end{lemma}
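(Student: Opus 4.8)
The plan is to prove existence by a fixed-point argument on each time step, iterating the construction across $n=1,\dots,N$, and then to prove uniqueness for $l$ large by a contraction estimate that exploits the factor $\tau_l$ multiplying the nonlinearity in the backward Euler step. For existence, fix $m$ and suppose $u^{n-1}_{m,l}$ is known. One must find $(u^n_{m,l},\varphi^n_{m,l}) \in V^u_m\times V^\varphi_m$ satisfying \eqref{fully_weak_u}--\eqref{fully_weak_phi}. As in Lemma~\ref{semi_existence_uniqueness}, for each $w\in V^u_m$ the second equation \eqref{fully_weak_phi} determines a unique $\tilde\varphi = S_n(w)\in V^\varphi_m$ (by Lax--Milgram with coercivity constant $\sigma_\circ$), and $S_n$ is Lipschitz and bounded with values in a fixed ball by Lemma~\ref{fully_bounds}. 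Substituting $\varphi = g^n_\varphi + S_n(u)$ into \eqref{fully_weak_u} yields a finite-dimensional equation $\Phi_n(\alpha)=0$ for the coefficient vector $\alpha$ of $\tilde u^n_{m,l}$; the map $\alpha\mapsto$ (solution of the linear elliptic step with this right-hand side) is continuous, and the a priori bounds of Lemma~\ref{fully_bounds} (applied to a single step, or via the discrete energy estimate) confine it to a ball. Brouwer's fixed point theorem then gives a solution on step $n$; the initial step uses \eqref{fully_weak_initial} to define $u^0_{m,l}=P_m u_0$ (or the appropriate projection respecting $g_u$). Induction over $n$ produces $(\tilde u_{m,l},\tilde\varphi_{m,l})\in X_{m,l}$.

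For uniqueness with $l$ large, suppose $(\tilde u^1,\tilde\varphi^1)$ and $(\tilde u^2,\tilde\varphi^2)$ are two solutions and proceed inductively: assume $u^{n-1}$ agrees (trivially true for $n=1$ by \eqref{fully_weak_initial}), and estimate the difference at step $n$. Subtracting the two copies of \eqref{fully_weak_phi} and using Strang's lemma exactly as in Lemma~\ref{semi_existence_uniqueness} gives
\begin{align*}
\|\nabla(\varphi^{n,1}_{m,l}-\varphi^{n,2}_{m,l})\|_{L_2(\Omega)} \le C_m\,\|u^{n,1}_{m,l}-u^{n,2}_{m,l}\|_{L_2(\Omega)},
\end{align*}
where $C_m$ absorbs the (finite, $m$-dependent) bound on $\|\nabla\varphi^{n,i}_{m,l}\|_{L_\infty(\Omega)}$ coming from Lemma~\ref{fully_bounds} and norm equivalence on $V^\varphi_m$. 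Subtracting the two copies of \eqref{fully_weak_u}, testing with $v = u^{n,1}_{m,l}-u^{n,2}_{m,l} =: e^n$ (the boundary data cancels, so $e^n = \tilde u^{n,1}_{m,l}-\tilde u^{n,2}_{m,l}\in V^u_m$), and using that $u^{n-1,1}=u^{n-1,2}$, one obtains
\begin{align*}
\frac{1}{\tau_l}\|e^n\|^2_{L_2(\Omega)} + \|\nabla e^n\|^2_{L_2(\Omega)} \le C_m\,\|e^n\|^2_{L_2(\Omega)},
\end{align*}
where $C_m$ collects the Lipschitz constants of $\sigma$ and of $\lceil\cdot\rceil$, the uniform bounds on $\sigma$, $\lceil\tilde\varphi^{n,i}_{m,l}\rceil$, $\nabla\varphi^{n,i}_{m,l}$, $\nabla g^n_\varphi$, and the norm equivalences on $V^u_m$ (including the inverse inequality needed to control $\|e^n\|_{L_6}$ or $\|\nabla e^n\|$ by $\|e^n\|_{L_2}$). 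Choosing $L$ with $2^{L}/T > C_m$, i.e. $1/\tau_l > C_m$ for all $l>L$, forces $\|e^n\|_{L_2(\Omega)}=0$, hence $e^n=0$; uniqueness of $\varphi^n_{m,l}$ then follows from the Strang estimate, and induction over $n$ closes the argument.

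The main obstacle is controlling the nonlinear term $\langle \sigma(u^{n,1})\lceil\tilde\varphi^{n,1}\rceil\nabla\varphi^{n,1} - \sigma(u^{n,2})\lceil\tilde\varphi^{n,2}\rceil\nabla\varphi^{n,2},\nabla e^n\rangle$ by $C_m\|e^n\|^2_{L_2}$ rather than by something involving $\|\nabla e^n\|^2_{L_2}$ with a constant one cannot beat. One splits this difference into three telescoping pieces (perturbing one factor at a time); each piece is a product of a bounded factor, a Lipschitz difference controlled by $\|e^n\|_{L_2}$ (using the $\varphi$-estimate for the $\nabla\varphi$ and $\lceil\cdot\rceil$ differences), and $\nabla e^n$. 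The term carrying $\|\nabla e^n\|_{L_2}$ is handled either by Young's inequality, absorbing $\tfrac12\|\nabla e^n\|^2$ into the left side, or — cleanly on the finite-dimensional space $V^u_m$ — by the inverse estimate $\|\nabla e^n\|_{L_2}\le C_m\|e^n\|_{L_2}$, which is legitimate here precisely because we are allowed an $m$-dependent constant and only need smallness in $\tau_l$. The same care is needed, with $\nabla u^{n,i}$ in place of $\nabla e^n$, for the $\langle\sigma(u^{n,1})\nabla\varphi^{n,1}\cdot\nabla g^n_\varphi - \sigma(u^{n,2})\nabla\varphi^{n,2}\cdot\nabla g^n_\varphi, e^n\rangle$ term, where the Sobolev embedding $H^1\hookrightarrow L_6$ and $\nabla g^n_\varphi\in L_3(\Omega)$ from \ref{ass_g} give the bound in terms of $\|e^n\|_{H^1}$, again absorbed or handled by the inverse estimate.
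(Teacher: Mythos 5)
Your proposal is correct and follows essentially the same route as the paper: existence is obtained per time step by Brouwer's fixed point theorem applied to the map that freezes the nonlinearity (with the elliptic solve $S_n$ and the uniform bounds on $\sigma$, $\lceil\cdot\rceil$ and $\|\nabla\varphi^n_{m,l}\|_{L_2}$ giving the self-mapping of a ball), and uniqueness for large $l$ comes from the factor $\tau_l$ beating the $m$-dependent Lipschitz constant of the nonlinearity. The only cosmetic difference is that the paper phrases uniqueness as $f$ being a contraction (Banach's fixed point theorem on the coefficient vectors), whereas you subtract two solutions and run a direct energy estimate with an inverse inequality; both arguments rest on exactly the same smallness condition $\tau_l C_m<1$.
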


\begin{proof}
	To prove this we use the ODE setting introduced in the proof of Lemma~\ref{semi_existence_uniqueness}.  Let $\tilde u^n_{m,l} = \sum_{j=1}^M \alpha^n_{l,j}\lambda_j$ for $n=1,...,N$. Then \eqref{full_weak} corresponds to finding $\alpha^n_l \in \R^M$ such that
	\begin{align}\label{ODE_alpha_BE}
	\frac{M\alpha^n_l - M\alpha^{n-1}_l}{\tau_l} + K\alpha^n_l = F(\alpha^n_l,t_n) + G(t_n),
	\end{align}
	for $n=1,..,N$, with $M\alpha^0_l = b$, where $b_i = \langle u_0-g_u(0), \lambda_i \rangle$, $F$ as in \eqref{ODE_alpha}, and $G$ slightly modified as
	\begin{align*}
	G(t_n) := -\langle \frac{g^n_u-g^{n-1}_u}{\tau_l}, \lambda_i \rangle - \langle \nabla g^n_u, \nabla \lambda_i \rangle.
	\end{align*}
	Note that \eqref{ODE_alpha_BE} is the backward Euler discretization of the ODE \eqref{ODE_alpha}.
	
	To apply Brouwer's fixed point theorem we define the mapping $f: \R^M \rightarrow \R^M$ such that $\beta = f(\gamma)$ is the solution to the system
	\begin{align*}
	\frac{M\beta - M\alpha^{n-1}_l}{\tau_l} + K\beta = F(\gamma,t_n) + G(t_n),
	\end{align*}
	which is equivalent to 
	\begin{align}\label{T_mapping}
	\beta = (M + \tau_lK)^{-1}(M\alpha^{n-1}_l + \tau_lF(\gamma,t_n) + \tau_lG(t_n)).
	\end{align}
	Let $\tilde y$ be the function corresponding to the vector $\gamma$, that is, $\tilde y^n = \sum_{j=1}^M \gamma^n_{j}\lambda_j$. From the definitions of $F_i$ and $G_i$ in the proof of Lemma~\ref{semi_existence_uniqueness}, and with $\tilde \psi=S(\tilde y)$ it follows that
	\begin{align*}
	|F_i(\gamma,t_n)|&\leq C(\sigma,g_\varphi)\|\nabla \psi\|_{L_2(\Omega)}\|\nabla \lambda_i\|_{L_2(\Omega)} \\&\quad+ \sigmaup \|\nabla \psi\|_{L_2(\Omega)}\|g_\varphi(t_n)\|_{L_3(\Omega)}\|\lambda_i\|_{L_6(\Omega)} \leq C_m(\sigma,g_\varphi),\\
	|G_i(t_n)|&\leq C_m(D_tg_u,g_u).
	\end{align*}
Here the boundedness of $\|\nabla \psi\|_{L_2(\Omega)}$ follows from Lemma~\ref{fully_bounds} with $\tilde u^n_{m,l}= \tilde y$. Hence, letting $B_R\in \R^M$ denote the ball with radius $R>0$, it is clear that $f\colon B_R\rightarrow B_R$ if $R$ sufficiently large. Now define $\beta_1=f(\gamma_1), \beta_2=f(\gamma_2) \in \R^M$. From \eqref{T_mapping} we have
	\begin{align*}
	(\beta^1-\beta^2) = \tau_l(M + \tau_lK)^{-1}(F(\gamma_1,t_n) - F(\gamma_2,t_n)),
	\end{align*}
	and using the Lipschitz continuity of $F(\cdot,t)$, see proof of Lemma~\ref{semi_existence_uniqueness}, we get
	\begin{align*}
	\|\beta^1-\beta^2\|_{\R^M} \leq C_L\tau_l\|(M + \tau_lK)^{-1}\|_{F}\|\gamma_1 - \gamma_2\|_{\R^M},
	\end{align*}
	where $\|\cdot\|_F$ denotes the matrix Frobenius norm which is finite since $M+\tau_l K$ is invertible. This proves that $f$ is continuous and the existence of a solution follows from Brouwer's fixed point theorem. Furthermore, it is clear that if $\tau_l$ is sufficiently small, or equivalently $l$ sufficiently large, then $f$ is a contraction on $\R^M$ and Banach's fixed point theorem gives uniqueness.
\end{proof}

\subsection{Convergence of fully discrete solutions}
To prove convergence of the fully discrete method we introduce the continuous and piecewise affine interpolant $\tilde U_{m,l}(t)$ of $\tilde u_{m,l}$
\begin{align}\label{U_def}
\tilde U_{m,l}(t) := \tilde u^{n-1}_{m,l} \frac{t_n-t}{t_n-t_{n-1}} + \tilde u^n_{m,l} \frac{t-t_{n-1}}{t_n-t_{n-1}}, \quad t \in I_n.
\end{align}
Note that
\begin{align}\label{DtU_equality}
D_t \tilde U_{m,l}(t) = \frac{\tilde u^n_{m,l} - \tilde u^{n-1}_{m,l}}{\tau_l}, \quad t\in I_n.
\end{align}
Using \eqref{fully_weak_u}, we have for $t\in I_n$ and $v\in H^1(\Omega;\Gamma^u_D)$
\begin{align*}
\langle D_t \tilde U_{m,l}(t),v\rangle &\leq \big(\|u^n_{m,l}\|_{H^1(\Omega)} + C(\sigma,g_\varphi)\|\nabla \varphi^n_{m,l}\|_{L_2(\Omega)} \\&\quad+ \sigmaup\|\nabla \varphi^n_{m,l}\|_{L_2(\Omega)} \|\nabla g^n_\varphi\|_{L_3(\Omega)} + \|\partial_t g^n_u\|_{H^1(\Omega;\Gamma^u_D)^\ast}\big)\|v\|_{H^1(\Omega)},
\end{align*}
where $\partial_t g^n_u = (g_u^n-g_u^{n-1})/\tau_l$. Note that $\partial_t g^n_u = \tau_l^{-1}\int_{I_n} D_t g_u \intd t$. This, together with Lemma~\ref{fully_bounds} and \ref{ass_g}, gives
\begin{align*}
\|D_t \tilde U_{m,l}\|_{L_2(0,T;(V^u_m)^\ast)} \leq C(u_0,\sigma,g_u,D_t g_u, g_\varphi).
\end{align*}
In addition, using \ref{ass_proj} as in the proof of Lemma~\ref{semi_bounds}, we get  
\begin{align}\label{fully_Dt_bound}
\|D_t \tilde U_{m,l}\|_{L_2(0,T;H^1(\Omega; \Gamma^u_D)^\ast)} \leq C(u_0,\sigma,g_u,D_t g_u, g_\varphi).
\end{align}

In the analysis we also use the following reformulation of \eqref{fully_weak_u} 
\begin{align}\label{fully_u_walkington}
\langle u^n_{m,l} - u^{n-1}_{m,l}, v \rangle = \langle F_{m,l}, v \rangle,
\end{align} 
with
\begin{align*}
\langle F_{m,l}, v \rangle &:= -  \tau_{l}\langle \nabla u^n_{m,l}, \nabla v \rangle - \tau_{l}\langle \sigma(u^n_{m,l})\lceil \tilde \varphi^n_{m,l} \rceil \nabla \varphi^n_{m,l}, \nabla v \rangle \\&\quad+ \tau_{l}\langle \sigma(u^n_{m,l}) \nabla \varphi^n_{m,l} \cdot \nabla g^n_\varphi, v\rangle, \quad \forall v \in V^u_m.
\end{align*}

\begin{thm}\label{fully_conv_galerkin}
A subsequence of solutions $(\tilde u_{m_k, l_k}, \tilde \varphi_{m_k,l_k}) \in X_{m_k,l_k}$ of \eqref{full_weak} converges strongly in $L_2(0,T; H^1(\Omega;\Gamma^u_D))\times L_2(0,T; H^1(\Omega;\Gamma^\varphi_D))$ to a solution $(\tilde u, \tilde \varphi)$ of \eqref{weak}.
\end{thm}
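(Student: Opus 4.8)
The plan is to mirror the structure of the proof of Theorem~\ref{semi_conv_galerkin} for the semidiscrete case, with the extra bookkeeping needed because $X_{m,l} \not\subseteq X_m$ (the fully discrete functions are discontinuous in time). First I would extract weakly convergent subsequences from the a~priori bounds in Lemma~\ref{fully_bounds}: from \eqref{fully_bounds_1} the piecewise-constant $\tilde\varphi_{m_k,l_k}$ are bounded in $L_2(0,T;H^1(\Omega;\Gamma^\varphi_D))$, and from \eqref{fully_bounds_2} the piecewise-constant $\tilde u_{m_k,l_k}$ are bounded in $L_2(0,T;H^1(\Omega;\Gamma^u_D))$, while by \eqref{fully_Dt_bound} the affine interpolants $\tilde U_{m_k,l_k}$ are bounded in $L_2(0,T;H^1(\Omega;\Gamma^u_D))\cap H^1(0,T;H^1(\Omega;\Gamma^u_D)^\ast)$. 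Passing to a subsequence, $\tilde U_{m_k,l_k} \rightharpoonup \tilde u$ in this intersection space, $\tilde u_{m_k,l_k} \rightharpoonup \tilde u$ in $L_2(0,T;H^1(\Omega;\Gamma^u_D))$ (the two limits agree because $\|\tilde U_{m,l}-\tilde u_{m,l}\|_{L_2(0,T;L_2(\Omega))}^2 \le \tau_l^2\sum_n\|\tilde u^n_{m,l}-\tilde u^{n-1}_{m,l}\|^2/3 \to 0$ by \eqref{fully_bounds_3}), and $\tilde\varphi_{m_k,l_k} \rightharpoonup \tilde\varphi$ in $L_2(0,T;H^1(\Omega;\Gamma^\varphi_D))$.

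Next I would upgrade to strong/pointwise convergence of $\tilde u_{m_k,l_k}$. By Aubin--Lions applied to the affine interpolants, $\tilde U_{m_k,l_k} \to \tilde u$ strongly and pointwise a.e.\ in $L_2(0,T;L_2(\Omega))$; here the uniform time step and the compactness argument of \cite{Walkington10} are what licence passing this to the piecewise-constant interpolant $\tilde u_{m_k,l_k}$. With $\tilde u_{m_k,l_k} \to \tilde u$ pointwise a.e.\ and $\tilde\varphi_{m_k,l_k} \rightharpoonup \tilde\varphi$ weakly in $L_2(0,T;H^1(\Omega;\Gamma^\varphi_D))$, Lemma~\ref{lem_second_equation} applies verbatim: the discrete equation \eqref{fully_weak_phi}, tested against $w = \tilde\varphi^n_{m,l}$ and against the piecewise-constant-in-time projection of $\tilde\varphi$, yields the hypothesis \eqref{second_equation}, so $\tilde\varphi_{m_k,l_k} \to \tilde\varphi$ strongly in $L_2(0,T;H^1(\Omega;\Gamma^\varphi_D))$ and, along a further subsequence, $\sigma(u_{m_k,l_k})\nabla\varphi_{m_k,l_k}$ and $\sigma(u_{m_k,l_k})\lceil\tilde\varphi_{m_k,l_k}\rceil\nabla\varphi_{m_k,l_k}$ converge strongly in $L_2(0,T;L_2(\Omega;\R^3))$ to $\sigma(u)\nabla\varphi$ and $\sigma(u)\lceil\tilde\varphi\rceil\nabla\varphi$.

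Then I would identify the limit as a solution of \eqref{weak}. Fix a smooth-in-time test function $v(t)$ taking values in $V^u_{m_0}$ for some fixed $m_0$; for $m_k \ge m_0$, multiply \eqref{fully_weak_u} by $\tau_{l_k}$ and sum over $n$, rewriting the left side via \eqref{DtU_equality} as $\int_0^T\langle D_t\tilde U_{m_k,l_k},v\rangle\intd t + \int_0^T\langle\nabla u^{(k)}_{m_k,l_k},\nabla v\rangle\intd t$ where $u^{(k)}$ denotes the piecewise-constant interpolant. Using the weak convergence of $D_t\tilde U_{m_k,l_k}$, the weak convergence of $\nabla u_{m_k,l_k}$, the strong convergence of the two nonlinear flux terms, and the fact that $g_\varphi$, $g_u$ are approximated by their nodal-in-time values (here \ref{ass_g} and the partition being subordinate to the decomposition of \ref{ass_g} matter, and one uses $\|\nabla g^n_\varphi - \nabla g_\varphi\|\to 0$ a.e.\ in $t$), one passes to the limit and obtains \eqref{weaku} for all such $v$; density of $\cup_k L_2(0,T;V^u_{m_k})$ in $L_2(0,T;H^1(\Omega;\Gamma^u_D))$ together with the uniform bounds (cf.\ \cite[Theorem~3, p.~121]{Yosida95}) extends this to all admissible $v$. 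Equation \eqref{weakphi} is recovered as in Lemma~\ref{lem_second_equation}, and \eqref{weakintial} follows from \eqref{fully_weak_initial} since $u^0_{m,l}$ is the $L_2$-projection of $u_0$ and $P_{m_k}u_0 \to u_0$. Finally, strong $L_2(0,T;H^1(\Omega;\Gamma^u_D))$-convergence of $\tilde u_{m_k,l_k}$ is obtained by the same expansion $\int_0^T\langle\nabla(\tilde u - \tilde u_{m_k,l_k}),\nabla(\tilde u - \tilde u_{m_k,l_k})\rangle\intd t = I+II+III$ as in Theorem~\ref{semi_conv_galerkin}, where for the cross term one uses weak convergence and for $III$ one substitutes \eqref{fully_u_walkington}/\eqref{fully_weak_u}, passes to the limit using the strong convergences just established plus $\int_0^T\langle D_t\tilde U_{m_k,l_k},\tilde u_{m_k,l_k}\rangle\intd t = \tfrac12\|\tilde u^N_{m_k,l_k}\|^2 - \tfrac12\|\tilde u^0_{m_k,l_k}\|^2 + \tfrac12\sum_n\|\tilde u^n-\tilde u^{n-1}\|^2$, whose last (nonnegative) sum must be controlled — and comparing with \eqref{weaku} to get $III \to \int_0^T\langle\nabla\tilde u,\nabla\tilde u\rangle\intd t$.

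The main obstacle is this last step: unlike the semidiscrete case where $\int_0^T\langle D_t\tilde u_m,\tilde u_m\rangle\intd t$ telescopes cleanly, here the discrete energy identity carries the extra term $\tfrac12\sum_n\|\tilde u^n_{m_k,l_k}-\tilde u^{n-1}_{m_k,l_k}\|_{L_2(\Omega)}^2$, which is only bounded (by \eqref{fully_bounds_3}), not obviously vanishing, and it enters with the "wrong" sign for a clean limit comparison; one must either show it tends to zero along the subsequence or absorb it by a careful liminf/limsup argument combined with the energy identity satisfied by the limit $(\tilde u,\tilde\varphi)$ of \eqref{weak}. Handling the time-discontinuity of $X_{m,l}$ consistently throughout — matching $u^n$, the piecewise-constant interpolant, and the affine interpolant $\tilde U_{m,l}$ at each occurrence — is the other place where care is needed, and is precisely where the uniform time step and \cite{Walkington10} are invoked.
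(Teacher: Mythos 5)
Your proposal is correct and follows essentially the same route as the paper's proof, so I only comment on the points where you deviate or express doubt. For identifying the limits of $\tilde U_{m,l}$ and $\tilde u_{m,l}$ you use the direct estimate
$\|\tilde U_{m,l}-\tilde u_{m,l}\|^2_{L_2(0,T;L_2(\Omega))}=\tfrac{\tau_l}{3}\sum_n\|\tilde u^n_{m,l}-\tilde u^{n-1}_{m,l}\|^2_{L_2(\Omega)}\le C\tau_l$
(the prefactor is $\tau_l$, not $\tau_l^2$, which still vanishes by \eqref{fully_bounds_3}); this gives \emph{strong} $L_2(0,T;L_2(\Omega))$-convergence of the difference and is cleaner than the paper's argument, which only shows weak convergence to zero by testing against $\chi_{[\bar\tau a,\bar\tau b]}\otimes v$. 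For compactness the paper applies \cite[Theorem~3.1]{Walkington10} to the piecewise-constant interpolant via \eqref{fully_u_walkington}, whereas you combine Aubin--Lions for the affine interpolant with your difference estimate; either is self-contained, but you should commit to one rather than invoking both, since with your $O(\tau_l^{1/2})$ estimate the Walkington result is not needed. Finally, the jump term you flag as the main obstacle is in fact harmless because it enters with the \emph{favourable} sign: choosing $v=\tau_{l_k}\tilde u^n_{m_k,l_k}$ in \eqref{fully_weak_u} and summing gives
\begin{align*}
III:=\int_0^T\|\nabla\tilde u_{m_k,l_k}\|^2_{L_2(\Omega)}\intd t=-\tfrac12\|\tilde u^N_{m_k,l_k}\|^2_{L_2(\Omega)}+\tfrac12\|\tilde u^0_{m_k,l_k}\|^2_{L_2(\Omega)}-\tfrac12\sum_{n=1}^N\|\tilde u^n_{m_k,l_k}-\tilde u^{n-1}_{m_k,l_k}\|^2_{L_2(\Omega)}+R_k,
\end{align*}
where $R_k$ collects the boundary-data and flux terms, each of which converges by the weak and strong convergences already established. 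Dropping the nonnegative sum, using weak lower semicontinuity of $\|\tilde u^N_{m_k,l_k}\|_{L_2(\Omega)}$ (note $\tilde u^N_{m_k,l_k}=\tilde U_{m_k,l_k}(T)\rightharpoonup\tilde u(T)$ in $L_2(\Omega)$ via $\langle\tilde U_{m_k,l_k}(T),v\rangle=\langle\tilde U_{m_k,l_k}(0),v\rangle+\int_0^T\langle D_t\tilde U_{m_k,l_k},v\rangle\intd t$ for $v\in H^1(\Omega;\Gamma^u_D)$), and comparing with \eqref{weaku} tested against $\tilde u$, one obtains $\limsup_k III\le\int_0^T\|\nabla\tilde u\|^2_{L_2(\Omega)}\intd t$ and hence $\limsup_k(I+II+III)\le0$; since $I+II+III=\int_0^T\|\nabla(\tilde u-\tilde u_{m_k,l_k})\|^2_{L_2(\Omega)}\intd t\ge0$, this closes the strong $H^1$-convergence. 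This one-sided argument is presumably what the paper intends by ``mimic the argument in the proof of Theorem~\ref{semi_conv_galerkin}''.
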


\begin{proof}
	From Lemma~\ref{fully_bounds}, \eqref{fully_Dt_bound}, and the reflexivity of the spaces $$L_2(0,T; H^1(\Omega;\Gamma^u_D)),\quad L_2(0,T; H^1(\Omega;\Gamma^\varphi_D)),\quad L_2(0,T;H^1(\Omega; \Gamma^u_D)^\ast)$$ there exists a subsequence such that
	\begin{align*}
		(\tilde u_{m_k,l_k}, \tilde \varphi_{m_k,l_k})  &\rightharpoonup (\tilde \vv, \tilde \psi) &&\quad \text{in } L_2(0,T; H^1(\Omega;\Gamma^u_D))\times L_2(0,T; H^1(\Omega;\Gamma^\varphi_D)), \\
		D_t \tilde U_{m_k,l_k} &\rightharpoonup D_t\tilde  U  &&\quad \text{in } L_2(0,T;H^1(\Omega; \Gamma^u_D)^\ast),
	\end{align*}
	for some $$(\tilde \vv, \tilde \psi) \in L_2(0,T; H^1(\Omega;\Gamma^u_D))\times L_2(0,T; H^1(\Omega;\Gamma^\varphi_D)), \ D_t\tilde U \in L_2(0,T;H^1(\Omega; \Gamma^u_D)^\ast).$$

	The convergence of the initial conditions follows as in the semi-discrete case and we conclude $u^0_{m_k,l_k} \rightarrow \vv (0) = u(0)$. 
	
	Next, we prove that $D_t \tilde \vv = \tilde D_t U$. First, we note that $\tilde U_{m_k,l_k}-\tilde u_{m_k,l_k} \rightharpoonup 0$ weakly in $L_2(0,T;L_2(\Omega))$. To see this, pick $\chi_{[\bar \tau a,\bar \tau b]} \otimes v : (t,x) \mapsto \chi_{[\bar \tau a,\bar \tau b]}(t) \, v(x)$, for $v\in L_2(\Omega)$, $a<b$, and some $\bar\tau>0$. Now, due to \cite[Theorem 8.9]{Roubicek13}, for $\tau_{l_k}\leq \bar\tau$
	\begin{align*}
	&\int_0^T \langle \tilde U_{m_k,l_k}-\tilde u_{m_k,l_k}, \chi_{[\bar \tau a,\bar \tau b]} \otimes v \rangle \intd t = \frac{\tau_{l_k}}{2}\langle \tilde u_{m_k,l_k}^{\bar{\tau}b/\tau_{l_k}} - \tilde u_{m,l_k}^{\bar{\tau}a/\tau_{l_k}},v\rangle \leq C\tau_{l_k} \rightarrow 0,
	\end{align*}
	where we used \eqref{U_def} and the bounds in Lemma~\ref{fully_bounds}. Since $\tilde{U}_{m_k}$ is bounded in $L_2(0;T;L_2(\Omega))$ and functions of the form $\chi_{[\bar \tau a,\bar \tau b]} \otimes v$ are dense in $L_2(0,T;L_2(\Omega))$ this implies $\tilde U_{m_k,l_k}-\tilde u_{m_k,l_k} \rightharpoonup 0$ in $L_2(0,T;L_2(\Omega))$, see \cite[Theorem~3~p.~121]{Yosida95}. Thus we get, for $v\in C^1(0,T;H^1(\Omega;\Gamma^u_D))$ with $v(0)=v(T)=0$, 
	\begin{align*}
	\int_0^T \langle D_t \tilde U, v \rangle \intd t \leftarrow \int_0^T \langle D_t  \tilde U_{m_k,l_k}, v \rangle \intd t = -\int_0^T \langle  \tilde U_{m_k,l_k}, D_t v \rangle \intd t \rightarrow -\int_0^T \langle \tilde \vv, D_t v \rangle \intd t,
	\end{align*}
	and we conclude $D_t \tilde \vv = D_t\tilde U$, due to \eqref{fully_Dt_bound} and since $C^1(0,T;H^1(\Omega;\Gamma^u_D))$ with $v(0)=v(T)=0$ is dense in $L_2(0,T;L_2(\Omega))$, see see \cite[Theorem~3~p.~121]{Yosida95}. This implies that $(\tilde \vv, \tilde \psi) \in X$.
	
	In \cite[Theorem 3.1]{Walkington10} it is proved that if $\{u_{m_k,l_k}\}_k$ and $\{F_{m_k,l_k}\}_k$ in \eqref{fully_u_walkington} are bounded in $L_2(0,T;H^1(\Omega;\Gamma^u_D))$ and $L_2(0,T; (V^u_{m_k})^\ast)$, respectively, uniformly in $k$, then $\{u_{m_k,l_k}\}$ is precompact in $L_2(0,T; L_2(\Omega))$. Here, the boundedness of $\{u_{m_k,l_k}\}$ and $\{F_{m_k,l_k}\}$ follows from Lemma~\ref{fully_bounds}, \ref{ass_g}, and the bounds on $\sigma$ and $\lceil \cdot \rceil$. Hence, there exists a subsequence, still denoted $(m_k,l_k)$, such that $\tilde u_{m_k,l_k} \rightarrow \tilde \vv$ strongly and pointwise a.e.~in $L_2(0,T;L_2(\Omega))$.
	
	Owing to Lemma~\ref{lem_second_equation} we have for some subsequence, still denoted $(m_k,l_k)$, that $\varphi_{m_k,l_k}, \nabla \varphi_{m_k,l_k}, \sigma(u_{m_k,l_k})\nabla \varphi_{m_k,l_k}$, and $\sigma(u_{m_k,l_k})\nabla \lceil \tilde \varphi_{m_k,l_k} \rceil \varphi_{m_k,l_k}$, converges strongly in $L_2(0,T;L_2(\Omega))$. Using \eqref{fully_weak_u} we get for $v \in L_2(0,T;V^u_m)$
	\begin{align*}
	&\int_0^T \langle D_t \tilde U_{m_k,l_k} + \partial_t g_u, v \rangle + \langle \nabla u_{m_k,l_k}, \nabla v \rangle \intd t \\&\quad= \int_0^T \big(- \langle \sigma(u_{m_k,l_k})\lceil \tilde \varphi_{m_k,l_k} \rceil \nabla \varphi_{m_k,l_k}, \nabla v \rangle + \langle \sigma(u_{m_k,l_k})\nabla \varphi_{m_k,l_k}\cdot \nabla g_\varphi, v \rangle\big) \intd t, 
	\end{align*}
	where $\partial_t g^n_u = (g^n_u-g^{n-1}_u)/\tau_l$. Keeping $v$ fixed we get as $k \to \infty$
	\begin{align*}
	\int_0^T \langle D_t \tilde y + D_t g_u, v \rangle + \langle \nabla u, \nabla v \rangle \intd t &= \int_0^T \big(- \langle \sigma(u_{m_k})\lceil \tilde \varphi_{m_k} \rceil \nabla \varphi_{m_k}, \nabla v \rangle \\&\quad+ \langle \sigma(u_{m_k})\nabla \varphi_{m_k}\cdot \nabla g_\varphi, v \rangle\big) \intd t, 
	\end{align*}
	where we used the weak convergence of $\tilde U_{m_k}$ and $\nabla u_{m_k}$, the strong convergence of $\sigma(u_{m_k,l_k})\nabla \varphi_{m_k,l_k}$ and $\sigma(u_{m_k,l_k})\nabla \lceil \tilde \varphi_{m_k,l_k} \rceil \varphi_{m_k,l_k}$, and $\partial_t g_u \to D_t g_u$. Now, due to the density of $\{v(x,t): v|_{I_n} \in V^u_m\}$ and the boundedness of $u_{m_k,l_k}, \varphi_{m_k,l_k}$, this holds for all $v \in L_2(0,T;H^1(\Omega;\Gamma^u_D))$ \cite[Theorem~3~p.~121]{Yosida95}. Furthermore, in the spirit of Lemma~\ref{second_equation} we may prove that $\tilde \psi$ solves \eqref{weakphi}. Hence, $(\tilde \vv, \tilde \psi)$ is a solution to \eqref{weak}.
	
	To prove that $\tilde u_{m_k,l_k} \rightarrow \tilde \vv$ strongly in $L_2(0,T; H^1(\Omega;\Gamma^u_D))$ we may mimic the argument in the proof of Theorem~\ref{semi_conv_galerkin} (recall \eqref{DtU_equality}).
\end{proof}

\begin{corollary}\label{cor_fully_discrete}
	If the solution $(u,\varphi)$ to \eqref{weak} is unique, then the whole sequence of fully discrete approximations $(\tilde u_{m,l},\tilde \varphi_{m,l}) \in X_{m,l}$ converges.
\end{corollary}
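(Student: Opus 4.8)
The plan is to mirror the proof of Corollary~\ref{cor_semi_conv} verbatim in structure, replacing each semidiscrete ingredient by its fully discrete counterpart. First I would record the uniform bounds: by Lemma~\ref{fully_bounds} the family $\{(\tilde u_{m,l},\tilde\varphi_{m,l})\}$ is bounded in $L_2(0,T;H^1(\Omega;\Gamma^u_D))\times L_2(0,T;H^1(\Omega;\Gamma^\varphi_D))$, and by \eqref{fully_Dt_bound} the interpolants satisfy $\|D_t\tilde U_{m,l}\|_{L_2(0,T;H^1(\Omega;\Gamma^u_D)^\ast)}\le C$, all uniformly in $m$ and $l$. In particular every subsequence of $\{(\tilde u_{m,l},\tilde\varphi_{m,l})\}$ has, by reflexivity, a weakly convergent sub-subsequence to which the compactness and strong-convergence machinery of Theorem~\ref{fully_conv_galerkin} applies.

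Next, I would fix an arbitrary index sequence $(m_k,l_k)$ with $m_k\to\infty$ and $l_k\to\infty$, so that ``convergence of the whole sequence'' is interpreted along any such diagonal sequence. Theorem~\ref{fully_conv_galerkin}, applied to $(m_k,l_k)$, produces a subsequence converging strongly in $L_2(0,T;H^1(\Omega;\Gamma^u_D))\times L_2(0,T;H^1(\Omega;\Gamma^\varphi_D))$ to a solution of \eqref{weak}; since the same statement applies to every subsequence of $(m_k,l_k)$, one concludes that every subsequence admits a further subsequence converging strongly to some solution of \eqref{weak}. By the assumed uniqueness of the solution to \eqref{weak}, each such subsequential limit must equal the fixed pair $(\tilde u,\tilde\varphi)$, independently of the chosen subsequence. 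I would then close the argument with the standard subsequence principle in a metric space: if every subsequence of a sequence in a normed space has a further subsequence converging to a common limit $\ell$, then the whole sequence converges to $\ell$. Applying this in $L_2(0,T;H^1(\Omega;\Gamma^u_D))\times L_2(0,T;H^1(\Omega;\Gamma^\varphi_D))$ gives $(\tilde u_{m_k,l_k},\tilde\varphi_{m_k,l_k})\to(\tilde u,\tilde\varphi)$, and since $(m_k,l_k)$ was arbitrary, the whole family converges.

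I do not anticipate a genuine obstacle: the proof is the routine sub-subsequence ``Urysohn'' argument already exploited for Corollary~\ref{cor_semi_conv}. The only points requiring a little care are bookkeeping ones — making precise in what sense the family indexed by the two parameters $m$ and $l$ ``converges'' (namely along any diagonal sequence $m_k,l_k\to\infty$), and checking that Theorem~\ref{fully_conv_galerkin} genuinely delivers, for each such sequence, a strong subsequential limit that solves \eqref{weak}, which is exactly where the hypothesis of uniqueness enters to force all cluster points to coincide.
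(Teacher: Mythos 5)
Your argument is correct and is essentially the paper's own proof: the paper likewise observes that boundedness plus the subsequence extraction of Theorem~\ref{fully_conv_galerkin} forces every accumulation point to be a solution of \eqref{weak}, so uniqueness leaves a single accumulation point and the whole sequence converges. Your version merely spells out the standard sub-subsequence principle and the diagonal interpretation of the double index $(m,l)$, which the paper leaves implicit.
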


\begin{proof}
	If the solution to \eqref{weak} is unique there can only be one accumulation point, cf.~Corollary~\ref{cor_semi_conv}.
\end{proof}

\begin{remark}
	There are many other time discretizations available, see, e.g., \cite[Chapter~16]{Thomee2006} for discretizations of a parabolic nonlinear problem. For instance, one could use linearized schemes to avoid solving a nonlinear problem in each time step. In this case the existence and uniqueness result follows easily. Convergence may be deduced by comparing the linearized solution to the backward Euler solution. However, to avoid overloading the paper, we shall not analyze this here. 
\end{remark}


\section{Regularity and uniqueness}\label{sec_regularity}
In this section we prove additional regularity and uniqueness of a solution to the weak problem \eqref{weak}. For this purpose we use Theorem~\ref{thm_main_reg} below, which is based on \cite{Hieber08, Meinlschmidt17}, see, in particular, \cite[Theorem 3.1]{Hieber08}. The theory in \cite{Mitrea07} gives a setting where assumption \ref{cond_OP} below, is satisfied. Our aim is to combine these results to obtain additional regularity for the Joule heating problem with mixed boundary conditions on creased domains. For similar settings, see \cite[Section 3]{Meinlschmidt17}, where regularity for the Joule heating problem with pure Robin boundary conditions for the temperature and mixed boundary conditions for the potential is studied. In addition, we also prove higher regularity of the solution in the interior of the domain. We emphasize that the differences in the regularity within the domain makes the problem well suited for $h$- and $hp$-adaptive finite elements.

In \cite{Hieber08} the following type of systems are studied
\begin{subequations}\label{eq_hieber}
	\begin{alignat}{2}
	D_t u - \Delta u &= R(u),& \quad & \text{in }\Omega \times (0,T),\\
	u &= g_u,& & \text{on }\Gamma^u_D \times (0,T) \\
	n \cdot \nabla u &= 0,& & \text{on }\Gamma^u_N \times (0,T),\\
	u(\cdot, 0) &= u_0, & & \text{in }\Omega.		
	\end{alignat}
\end{subequations}
If we define $R(u) = \sigma(u)|\nabla S(u)|^2$ such that $\varphi = S(u)$ solves
\begin{subequations}\label{S_hieber}
	\begin{alignat}{2}
	-\nabla \cdot (\sigma(u)\nabla\varphi) &= 0, & \quad & \text{in }\Omega \times (0,T),\label{S_hieber_phi}\\
	\varphi &= g_\varphi,& & \text{on }\Gamma^\varphi_D \times (0,T) \\
	n \cdot \nabla \varphi &= 0,& & \text{on }\Gamma^\varphi_N \times (0,T),
	\end{alignat}
\end{subequations}
then \eqref{eq_hieber} is equivalent to \eqref{jouleheating}.

For $p>\frac{3}{2}$ and a fixed $r > \frac{4p}{2p-3}$ we consider the following assumptions.
\begin{enumerate}[label=(B\arabic*)]
	\item $\Omega$ is a bounded domain with Lipschitz boundary (in the sense of Gr{\"o}ger, see \cite{Hieber08}), $\meas(\Gamma^u_D)> 0$, and   $\meas(\Gamma^\varphi_D)>0$.\label{ass_omega_reg}
	\item $g_u \in C([0,T]; W^1_{2p}(\Omega))\cap W^1_r(0,T;L_p(\Omega))$, $\Delta g_u(t) = 0$, $t\in(0,T)$, and $g_\varphi \in L_r(0,T; W^1_{2p}(\Omega))$. \label{ass_g_reg}
	\item $u_0-g_u(0) \in (L_p(\Omega),D(\Delta_p))_{1-\frac{1}{r},r}$. \label{ass_intial_reg}
	\item $\sigma \in C^1(\R)$, Lipschitz continuous, and $0< \sigmalow \leq \sigma(x) \leq \sigmaup < \infty$, $\forall x \in \R$. \label{ass_sigma_reg} 
	\item $\Delta$ is a topological isomorphism from $W^1_{2p}(\Omega;\Gamma^u_D)$ onto $W^{-1}_{2p}(\Omega;\Gamma^u_D)$ and from $W^1_{2p}(\Omega;\Gamma^\varphi_D)$ onto $W^{-1}_{2p}(\Omega;\Gamma^\varphi_D)$. \label{cond_OP}
\end{enumerate}

Here $D(A)$ denotes the domain of the operator $A$, that is 
\begin{align*}
	D(A) = \{v \in H^1(\Omega;\Gamma_D): \exists f \in L_2(\Omega), \ -a( v,  w) = \langle f, w \rangle, \ \forall w \in H^1(\Omega;\Gamma_D)\},
\end{align*}
for $\Gamma_D \subseteq \partial \Omega$. The semigroup generated by $A$ extends to a $C_0$-semigroup on $L_p(\Omega)$, $1< p < \infty$, and we denote its generator by $A_p$, see \cite{Hieber08} and references therein. In particular, we use $\Delta_p$ for the Laplacian that maps onto $L_p(\Omega)$. Furthermore, for Banach spaces $V$ and $W$ forming an interpolation couple, $(V,W)_{\alpha,\beta}$ denotes the real interpolation space.

The following theorem relies on \cite[Theorem~3.1]{Hieber08}. 

\begin{thm}\label{thm_main_reg}
	Let $p>\frac{3}{2}$ and $r>\frac{4p}{2p-3}$. Under the assumptions \ref{ass_omega_reg}-\ref{cond_OP}, there exists a unique solution to \eqref{jouleheating} satisfying
	\begin{align}\label{u_reg}
		\tilde u \in W^1_r(0,T_\ast; L_p(\Omega)) \cap L_r(0,T_\ast;W^1_{2p}(\Omega;\Gamma^u_D)), \quad \tilde \varphi \in L_r(0,T_\ast;W^1_{2p}(\Omega;\Gamma^\varphi_D)), 
	\end{align}
	for some $0<T_\ast\leq T$.
\end{thm}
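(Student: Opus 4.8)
The plan is to set up the problem \eqref{jouleheating} as a fixed point for the solution operator $S$ of the elliptic sub-problem \eqref{S_hieber} composed with the parabolic solution operator for \eqref{eq_hieber}, and then invoke \cite[Theorem~3.1]{Hieber08} to close the argument on a short time interval $[0,T_\ast]$. First I would verify that, under assumptions \ref{ass_omega_reg}--\ref{cond_OP}, the operator $S$ that sends $u$ to the weak solution $\varphi = S(u)$ of \eqref{S_hieber} maps into $L_r(0,T;W^1_{2p}(\Omega;\Gamma^\varphi_D))$ with $\|\nabla S(u)\|_{L_r(0,T;L_{2p})}$ controlled by the data and uniformly bounded in $u$. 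This is where assumption \ref{cond_OP} enters: the topological isomorphism $\Delta\colon W^1_{2p}(\Omega;\Gamma^\varphi_D)\to W^{-1}_{2p}(\Omega;\Gamma^\varphi_D)$ (the creased-domain regularity of \cite{Mitrea07}) plus the uniform ellipticity and boundedness of $\sigma$ from \ref{ass_sigma_reg} upgrade the standard $H^1$-estimate for $\varphi$ to a $W^1_{2p}$-estimate, after subtracting the boundary datum $g_\varphi\in L_r(0,T;W^1_{2p})$ from \ref{ass_g_reg}. I would also record the Lipschitz dependence $\|\nabla S(u_1)-\nabla S(u_2)\|_{L_{2p}} \le C\|\sigma(u_1)-\sigma(u_2)\|_{L_\infty}\,\|\nabla\varphi_1\|_{L_{2p}}$, using the isomorphism once more together with the Lipschitz continuity of $\sigma$.

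Next I would identify the right-hand side $R(u) = \sigma(u)|\nabla S(u)|^2$ as an admissible nonlinearity for \cite[Theorem~3.1]{Hieber08}. The key Hölder bound is $\|\sigma(u)|\nabla S(u)|^2\|_{L_p(\Omega)} \le \sigma^\circ\,\|\nabla S(u)\|_{L_{2p}(\Omega)}^2$, so that $R$ maps $L_r(0,T;W^1_{2p})$ boundedly into $L_{r/2}(0,T;L_p)$; the condition $r > \tfrac{4p}{2p-3}$ is exactly what is needed to reconcile this loss of a factor of two in the time exponent with the maximal-parabolic-regularity scale (equivalently, $\tfrac{r}{2} > \tfrac{2p}{2p-3}$, the threshold for the heat semigroup on $L_p$ with $p > \tfrac32$). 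Combined with the Lipschitz estimate for $S$ and the local Lipschitz continuity of $v \mapsto |v|^2$ on bounded sets, this makes $R$ locally Lipschitz from the trajectory space into the data space. The parabolic data assumptions \ref{ass_g_reg} ($g_u\in C([0,T];W^1_{2p})\cap W^1_r(0,T;L_p)$ with $\Delta g_u=0$, so that $\tilde u = u - g_u$ solves a homogeneous-Dirichlet problem with the same Laplacian) and \ref{ass_intial_reg} (the trace/interpolation condition $u_0 - g_u(0)\in (L_p,D(\Delta_p))_{1-1/r,r}$) are precisely the hypotheses of \cite[Theorem~3.1]{Hieber08}, and \ref{cond_OP} supplies the needed isomorphism property of $\Delta_{2p}$ on $W^1_{2p}(\Omega;\Gamma^u_D)$, which in turn (by the argument in \cite{Hieber08, Meinlschmidt17}) gives that $\Delta_p$ generates a semigroup enjoying maximal $L_r$-regularity.

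With these ingredients in place, the proof concludes by applying \cite[Theorem~3.1]{Hieber08} directly: it yields a unique local-in-time solution $\tilde u \in W^1_r(0,T_\ast;L_p(\Omega))\cap L_r(0,T_\ast;W^1_{2p}(\Omega;\Gamma^u_D))$ for some $0<T_\ast\le T$, and then $\tilde\varphi = S(u) - g_\varphi$ inherits $\tilde\varphi\in L_r(0,T_\ast;W^1_{2p}(\Omega;\Gamma^\varphi_D))$ from the elliptic estimate of the first step, which is the claimed regularity \eqref{u_reg}. Uniqueness on $[0,T_\ast]$ follows from the contraction/fixed-point structure (or from the uniqueness statement in \cite[Theorem~3.1]{Hieber08}), and one should note the solution so obtained also solves the weak formulation \eqref{weak} by the equivalence already recorded in the excerpt, so it coincides with the weak solution whenever the latter is unique.

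The main obstacle I anticipate is the elliptic regularity step: showing that $\nabla S(u) \in L_{2p}$ uniformly in $u$ on a Lipschitz (creased) domain with mixed boundary conditions and a merely $L^\infty$, $C^1$ coefficient $\sigma(u)$. The bare isomorphism \ref{cond_OP} is stated for the \emph{constant-coefficient} Laplacian, so one must pass from $\Delta$ to the divergence-form operator $\nabla\cdot(\sigma(u)\nabla\cdot)$; this is a perturbation argument that works cleanly when $\sigma$ is close to a constant but in general requires the Meyers-type higher-integrability estimate for divergence-form equations on Gröger-regular domains (as in \cite{Meinlschmidt17}), whose exponent of integrability is what ultimately constrains the admissible $p$. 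Verifying that the exponent furnished by such a Meyers estimate is compatible with the range $p > \tfrac32$, and tracking the dependence of all constants so they remain uniform in $u$ (hence giving a genuine self-map on a ball and a contraction on a short enough interval), is the technical heart of the argument; everything else is a bookkeeping exercise in matching the hypotheses of \cite[Theorem~3.1]{Hieber08}.
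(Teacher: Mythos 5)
Your proposal follows essentially the same route as the paper: verify hypotheses \ref{cond_Ra}--\ref{cond_Rb} of \cite[Theorem~3.1]{Hieber08} for $R(u)=\sigma(u)|\nabla S(u)|^2$, where the $W^1_{2p}$-boundedness and Lipschitz dependence of the elliptic solution operator $S$ come from the homeomorphism property of $(-\nabla\cdot\sigma(u)\nabla)^{-1}$ on Gr\"oger-regular domains (the paper cites \cite[Corollary~3.24]{Meinlschmidt17} for exactly the coefficient-perturbation step you flag as the technical heart, using compactness of $\{\sigma(u)\}$ in $C(\bar\Omega)$ via Morrey). The only piece you gloss over is the paper's closing observation that $L_p(\Omega)\subseteq W^{-1}_{2p}(\Omega;\Gamma^u_D)$ for $p>3/2$, whence $D(\Delta_p)\subseteq W^1_{2p}(\Omega;\Gamma^u_D)$ and the abstract conclusion $u\in L_r(0,T_\ast;D(\Delta_p))$ translates into \eqref{u_reg}.
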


\begin{proof}
	Consider
	\begin{enumerate}[label=(B\arabic*)]\setcounter{enumi}{5}
		\item The function $R\colon W^1_{2p}(\Omega) \rightarrow L_p(\Omega)$ is continuous. \label{cond_Ra}
		\item $R(0) \in L_r(0,T; L_p(\Omega))$ and for $\beta>0$ there exist $g_\beta \in L_r(0,T)$ such that
		\begin{align*}
		\|R(u_1)-R(u_2)\|_{L_p(\Omega)} \leq g_\beta(t)\|u_1-u_2\|_{W^1_{2p}(\Omega)}, \quad t\in(0,T),
		\end{align*} 
		provided $\max(\|u_1\|_{W^1_{2p}(\Omega)}, \|u_2\|_{W^1_{2p}(\Omega)})\leq \beta$.\label{cond_Rb}
	\end{enumerate}
	In \cite[Theorem~3.1]{Hieber08} it is proved that if the conditions \ref{ass_omega_reg}-\ref{cond_OP} together with \ref{cond_Ra}-\ref{cond_Rb} on the operator $R$ are satisfied, then there is a unique solution to \eqref{eq_hieber} satisfying $u \in W^1_r(0,T_\ast; L_p(\Omega)) \cap L_r(0,T_\ast; D(\Delta_p))$ for some $0<T_\ast\leq T$. 
	
	Our aim is now to prove that if $R(u)=\sigma(u)|\nabla S(u)|^2$ with $S:W^1_{2p}(\Omega) \rightarrow W^1_{2p}(\Omega)$ defined as in \eqref{S_hieber}, then $R$ satisfies \ref{cond_Ra}-\ref{cond_Rb} and we may conclude that $(u,\varphi)$ solves \eqref{jouleheating} and \eqref{u_reg} is fulfilled.
	
	From \cite[Corollary 3.24]{Meinlschmidt17} it follows that the operator $(-\nabla \phi \cdot \nabla)^{-1}$ is a linear homeomorphism from $W^{-1}_{2p}(\Omega;\Gamma^\varphi_D)$ to $W^{1}_{2p}(\Omega;\Gamma^\varphi_D)$, if $\phi \in \mathfrak{C}$ is uniformly continuous on $\bar \Omega$, $\mathfrak{C}$ compact set in $C(\bar\Omega)$, $\phi$ admits a positive lower bound, and \ref{cond_OP} holds. It also holds that $(-\nabla\cdot \phi \nabla)^{-1}$ is Lipschitz with respect to $\phi$.
	
	In our setting, we have due to Morrey's inequality $W^1_{2p}(\Omega) \subseteq C^{0,\alpha}(\bar \Omega)$, for some $\alpha>0$, and $C^{0,\alpha}(\bar \Omega)$ embeds compactly into $C(\bar \Omega)$. This implies that $\mathfrak C = \{\sigma(u): u\in W^1_{2p}(\Omega)\}$ is compact and $\sigma(u)$ is uniformly continuous, since $\sigma$ is Lipschitz continuous. Furthermore, $\sigma(\cdot) \geq \sigmalow > 0$. Hence, given $\phi=\sigma(u)$ we deduce that there exists a unique solution $\tilde \varphi(t) \in W^1_{2p}(\Omega;\Gamma^\varphi_D)$ to \eqref{S_hieber}. This proves that the mappings $S$ and $R$ are well defined in the given spaces.
	
	Given $u$ fixed, let $F:W^{-1}_{2p}(\Omega;\Gamma^\varphi_D)\rightarrow W^{1}_{2p}(\Omega;\Gamma^\varphi_D)$, such that $\psi = F(g)$ is the solution to 
	\begin{align*}
	\nabla\cdot (\sigma(u) \nabla \psi) = g, \quad \psi|_{\Gamma^\varphi_D} = 0.
	\end{align*}	
	By letting $G = \nabla \cdot (\sigma(u)\nabla g_\varphi)$ and using $\varphi = \tilde \varphi + g_\varphi$  it follows from \eqref{S_hieber_phi} that $\tilde \varphi = F(G)$. Since the operator $F$ is bounded we get
	\begin{align}\label{reg_time}
	\|\tilde \varphi\|_{W^1_{2p}(\Omega;\Gamma^\varphi_D)} &\leq C\|G\|_{W^{-1}_{2p}(\Omega;\Gamma^\varphi_D)} = C \sup_{w \in W^1_{(2p)'}(\Omega;\Gamma^\varphi_D)\setminus \{0\}} \frac{\langle \nabla \cdot (\sigma(u)\nabla g_\varphi),w\rangle}{\|w\|_{W^1_{(2p)'}(\Omega)}} \\&= C \sup_{w \in W^1_{(2p)'}(\Omega;\Gamma^\varphi_D)\setminus \{0\}} \frac{\langle \sigma(u)\nabla g_\varphi,\nabla w\rangle}{\|w\|_{W^1_{(2p)'}(\Omega)}} \leq C\|g_\varphi\|_{W^1_{2p}(\Omega)}, \notag
	\end{align}
	where $(2p)'$ is the H\"{o}lder conjugate exponent to $2p$. Thus, $\varphi\in L_r(0,T;W^1_{2p}(\Omega))$, since $g_\varphi \in L_r(0,T;W^1_{2p}(\Omega))$, which implies $R(0) \in L_r(0,T;L_p(\Omega))$ in \ref{cond_Rb}.
	
	For $\varphi_1 = S(u_1)$ and $\varphi_2 = S(u_2)$ we get
	\begin{align*}
	&\|R(u_1)-R(u_2)\|_{L_p(\Omega)} \\&\quad\leq \|(\sigma(u_1)-\sigma(u_2))|\nabla \varphi_1|^2\|_{L_p(\Omega)} + \|\sigma(u_2)(|\nabla \varphi_1|^2-|\nabla \varphi_2|^2\|_{L_p(\Omega)} \\
	&\quad\leq C(\|u_1-u_2\|_{L_\infty(\Omega)}\|\nabla \varphi_1\|^2_{L_{2p}(\Omega)} + \|\nabla(\varphi_1 + \varphi_2)\|_{L_{2p}(\Omega)}\|\nabla(\varphi_1 - \varphi_2)\|_{L_{2p}(\Omega)}).
	\end{align*}
	Using Sobolev's inequality we get $\|u_1-u_2\|_{L_\infty(\Omega)} \leq C\|u_1-u_2\|_{W^1_{2p(\Omega)}}$. Due to the Lipschitz property of $(-\nabla\cdot \sigma(u) \nabla)^{-1}$ we get
	\begin{align*}
	\|\tilde \varphi_1 - \tilde \varphi_2\|_{W^1_{2p}(\Omega)} \leq C\|\sigma(u_1) - \sigma(u_2)\|_{L_\infty} \leq C\|u_1-u_2\|_{L_\infty} \leq C\|u_1-u_2\|_{W^1_{2p}(\Omega)}.
	\end{align*}
	which proves \ref{cond_Rb} and the continuity in \ref{cond_Ra}. Hence there exists a unique solution $u \in W^1_r(0,T_\ast; L_p(\Omega)) \cap L_r(0,T_\ast; D(\Delta_p))$ for some $T_\ast > 0$.
	
	Finally, by definition, $D(\Delta_p)$ denotes the domain such that the Laplacian maps into $L_p(\Omega)$. Let $p'$ and $(2p)'$ be the H\"{o}lder conjugates to $p$ and $2p$, respectively. Then
	\begin{align*}
	L_p(\Omega)=(L_{p'}(\Omega))^\ast, \quad W^{-1}_{2p}(\Omega; \Gamma^u_D) = (W^1_{(2p)'}(\Omega; \Gamma^u_D))^\ast.
	\end{align*}
	By Sobolev's inequality we have,
	\begin{align*}
	W^1_{(2p)'}(\Omega) \subseteq L_{6p/(4p-3)}(\Omega),
	\end{align*}
	and since $p'=p/(p-1)<6p/(4p-3)$ when $p>3/2$ we conclude $W^1_{(2p)'}(\Omega; \Gamma^u_D) \subseteq L_{p'}(\Omega)$, or equivalently $L_p(\Omega) \subseteq W^{-1}_{2p}(\Omega; \Gamma^u_D)$. Now, since we know that $\Delta$ is an isomorphism between $W^1_{2p}(\Omega;\Gamma^u_D)$ and $W^{-1}_{2p}(\Omega;\Gamma^u_D)$ and $L_p(\Omega) \subseteq W^{-1}_{2p}(\Omega;\Gamma^u_D)$ we deduce $D(\Delta_p) \subseteq W^1_{2p}(\Omega;\Gamma^u_D)$ and \eqref{u_reg} follows.
\end{proof}

Note that the result is only local in time, that is, the additional regularity and uniqueness are only guaranteed up to some $T_\ast\leq T$.

We provide an example of a geometric setting for which \ref{cond_OP} is satisfied. Assume instead of \ref{ass_omega_reg} the following
\begin{enumerate}[label=(B\arabic*')]
	\item $\Omega$ is a creased domain with respect to the boundary conditions for $u$ and $\varphi$. In addition $\meas(\Gamma^u_D)>0$ and $\meas(\Gamma^\varphi_D)>0$.\label{ass_creased}
\end{enumerate}
For the full definition of creased domains we refer to \cite[Definition~2.3]{Mitrea07}. We note however, that in our setting, it implies that $\Omega$ is a Lipschitz domain with $\Gamma^u_D$ and $\Gamma^\varphi_D$ open and non-empty and $\partial \Gamma^u_D$ and $\partial \Gamma^\varphi_D$ are not re-entrant. This means that the angles between the Dirichlet and Neumann parts of the boundary are strictly less than $\pi$.

\begin{corollary}
	Let $p>\frac{3}{2}$ and $r>\frac{4p}{2p-3}$. Under the assumptions \ref{ass_creased} and \ref{ass_g_reg}-\ref{ass_sigma_reg}, there exists a unique solution to \eqref{jouleheating} satisfying
	\begin{align*}
	\tilde u \in W^1_r(0,T_\ast; L_p(\Omega)) \cap L_r(0,T_\ast;W^1_{2p}(\Omega;\Gamma^u_D)), \quad \tilde \varphi \in L_r(0,T_\ast;W^1_{2p}(\Omega;\Gamma^\varphi_D)), 
	\end{align*}
	for some $0<T_\ast\leq T$.
\end{corollary}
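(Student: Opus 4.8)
The plan is to deduce the corollary directly from Theorem~\ref{thm_main_reg}. Since the data hypotheses \ref{ass_g_reg}--\ref{ass_sigma_reg} are common to both statements, it suffices to show that the crease hypothesis \ref{ass_creased} implies the two geometric hypotheses \ref{ass_omega_reg} and \ref{cond_OP}. The whole argument thus splits into two implications: one essentially definitional, one resting on the elliptic regularity theory for creased domains.

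First I would check \ref{ass_creased} $\Rightarrow$ \ref{ass_omega_reg}. By \cite[Definition~2.3]{Mitrea07} a creased domain is, in particular, a bounded Lipschitz domain whose Dirichlet and Neumann parts are relatively open and whose common boundary is locally a Lipschitz graph within a Lipschitz hypersurface; this is precisely the model case of a Gr{\"o}ger-regular set in the sense used in \cite{Hieber08}, so the boundary-regularity part of \ref{ass_omega_reg} holds, and $\meas(\Gamma^u_D)>0$, $\meas(\Gamma^\varphi_D)>0$ are part of \ref{ass_creased}. I expect this step to be routine bookkeeping with definitions.

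The substance is \ref{ass_creased} $\Rightarrow$ \ref{cond_OP}. Here I would invoke the well-posedness theory for the Poisson problem with mixed Dirichlet--Neumann conditions on creased domains from \cite{Mitrea07}: it furnishes an exponent $q_0 = q_0(\Omega) > 3$ (taking the smaller of the two values coming from the $u$- and $\varphi$-decompositions) such that, for every $q$ in an open interval with upper endpoint $q_0$ and lower endpoint below $2$, the negative Laplacian is a topological isomorphism from $W^1_q(\Omega;\Gamma_D)$ onto $W^{-1}_q(\Omega;\Gamma_D)$ for $\Gamma_D \in \{\Gamma^u_D,\Gamma^\varphi_D\}$. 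Crucially, it is the crease condition --- the absence of re-entrant angles along $\partial\Gamma_D$, equivalently interior angles between Dirichlet and Neumann faces strictly below $\pi$ --- that pushes the upper endpoint of this interval above $3$, whereas for a generic Lipschitz interface it need not. Consequently, for any $p > \tfrac32$ with $2p < q_0$, assumption \ref{cond_OP} is satisfied, and Theorem~\ref{thm_main_reg} applied with this $p$ and any $r > \tfrac{4p}{2p-3}$ yields the asserted unique solution with the stated regularity on some $0 < T_\ast \le T$.

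The hard part will not be new estimates --- all analytic work is front-loaded into Theorem~\ref{thm_main_reg} and the cited references --- but the correct matching of function-space scales: the regularity theory of \cite{Mitrea07} is naturally formulated on Besov/Triebel--Lizorkin (or Bessel-potential) scales, so one must specialize to the slice with smoothness exponent equal to one to recover $W^1_q$, identify the corresponding target space with the realization $W^{-1}_q(\Omega;\Gamma_D)$ dual to $W^1_{q'}(\Omega;\Gamma_D)$ demanded in \ref{cond_OP}, and read off from the geometric crease data that an exponent $q > 3$ (hence $2p > 3$) is indeed admissible.
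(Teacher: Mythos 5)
Your proposal is correct and follows essentially the same route as the paper: the corollary is reduced to Theorem~\ref{thm_main_reg} by verifying \ref{cond_OP} via the mixed-boundary Poisson well-posedness on creased domains from \cite{Mitrea07}, with the key step being exactly the specialization of the Besov/Sobolev scale to the smoothness-one slice (the paper takes $s=\frac{8+\epsilon}{12}$, $q=\frac{12}{4-\epsilon}$ in the admissible polygon $\mathcal{H}_\epsilon$, giving $2p=q>3$, i.e.\ $p=\frac{6}{4-\epsilon}>\frac32$). Your remark that the admissible $p$ is constrained by the crease exponent (your $2p<q_0$) is in fact slightly more careful than the paper's phrasing, which fixes one such $p$ without restating the constraint in the corollary.
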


\begin{proof}
To see that condition \ref{cond_OP} is fulfilled we use the result on equations of Poisson type in \cite{Mitrea07}. If $\Omega$ is a creased domain, $\Gamma_D \subseteq \partial \Omega$, and $g \in B^s_{q,q}(\Gamma_D)$, then there exists $\epsilon \in (0,\frac{1}{2})$ such that Poisson's equation is well-posed in the spaces
\begin{align}\label{Mitrea_eq}
v \in W^{s+\frac{1}{q}}_q(\Omega; \Gamma_D), \ -\Delta v = f \in \big(W^{2-s-\frac{1}{q}}_{q'}(\Omega; \Gamma_D)\big)^\ast, \ v|_{\Gamma_D} = g_u,
\end{align}
for $\frac{1}{q'}=1-\frac{1}{q}$ and $(s,\frac{1}{q}) \in \mathcal{H}_\epsilon$ where $\mathcal{H}_\epsilon$ is the polygon with vertices in 
\begin{align*}
(0,0), \quad (\epsilon, 0), \quad (1,\frac{1}{2}-\epsilon), \quad (1,1), \quad (1-\epsilon,1), \quad (0,\frac{1}{2}+\epsilon).
\end{align*}
Choosing $s=\frac{8+\epsilon}{12}$ and $q = \frac{12}{4-\epsilon}$ we have for $p=\frac{6}{4-\epsilon}>\frac{3}{2}$
\begin{align*}
W^{s+\frac{1}{q}}_q = W^1_{\frac{12}{4-\epsilon}} = W^1_{2p} \ \text{ and } (\ W^{2-s-\frac{1}{q}}_{q'})^\ast= (W^1_{\frac{12}{8+\epsilon}})^\ast = W^{-1}_{2p},
\end{align*}
and since $W^1_{2p}(\Omega)|_{\Gamma_d} = B^s_{p,p}(\Gamma_D)$ assumption \ref{ass_g_reg} gives $g_u(t), g_\varphi(t) \in B^s_{q,q}(\Gamma^u_D)$. We conclude that \ref{cond_OP} holds.
\end{proof}

\begin{remark}
There are other geometric settings where condition \ref{cond_OP} is fulfilled, see, for instance,  \cite{Hieber08,Meinlschmidt17,Disser15}.
\end{remark}

\begin{remark}
In \cite[Section 4]{Antontsev94} it is established that $\nabla \varphi \in L_{2q/(q-3)}(0,T;L_q(\Omega))$, for $q>3$, is sufficient for a unique solution, see also Theorem~\ref{existence_uniqueness_classical}. This agrees with the regularity we get in Theorem~\ref{thm_main_reg}.
\end{remark}

The next theorem shows that higher regularity achieved in the interior of the domain, cf.~the stationary case \cite[Theorem~4.2]{Jensen13}. Here we use the notation $D(\Delta_{p,k})$ for the domain such that $\Delta$ maps into $W^k_p(\Omega)$. Note that $D(\Delta_{p}) = D(\Delta_{p,0})$.

\begin{thm}\label{thm_interior_reg}
	Let $0<T_0<T_\ast$ and let $\Omega_0$ be a relatively compact domain in $\Omega$: $\Omega_0 \Subset \Omega$. Let $(u,\varphi)$ be the solution to \eqref{jouleheating} such that
	\begin{align}\label{u_phi_reg}
	\tilde u \in W^1_r(0,T_\ast; L_p(\Omega)) \cap L_r(0,T_\ast;W^1_{2p}(\Omega;\Gamma^u_D)), \quad \tilde \varphi \in L_r(0,T_\ast;W^1_{2p}(\Omega;\Gamma^\varphi_D)), 
	\end{align}
	for some $p>\frac{3}{2}$ and $r>\frac{4p}{2p-3}$. Then $\tilde u,\tilde \varphi \in L_r(T_0,T_\ast; W^2_s(\Omega_0))$ for all $s\in(1,\infty)$. If $\sigma \in C^\infty(\R)$, then $\tilde u, \tilde \varphi \in L_r(T_0,T_\ast;C^\infty(\Omega_0))$.
\end{thm}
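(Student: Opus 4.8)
\emph{Strategy.} The proof is an interior bootstrap that alternates two linear estimates: at (a.e.) fixed time, $\varphi(t)$ solves the elliptic equation $-\nabla\cdot(\sigma(u)\nabla\varphi)=0$ with coefficient $\sigma(u(t))$, and $u$ solves the heat equation $D_tu-\Delta u=f$ with $f:=\sigma(u)|\nabla\varphi|^2$; the latter is a genuine identity in $L_p(\Omega)$ for a.e.\ $t$ because of \eqref{u_phi_reg} and $\tilde u\in W^1_r(0,T_\ast;L_p(\Omega))$. All estimates are interior, so the Lipschitz (resp.\ creased) boundary never enters. Fix nested subdomains $\Omega_0\Subset\Omega_1\Subset\Omega_2\Subset\Omega$, cut-offs $\zeta_j\in C_c^\infty(\Omega_j)$ with $\zeta_j\equiv1$ on $\Omega_{j-1}$, and a temporal cut-off $\eta\in C^\infty([0,T_\ast])$ vanishing near $[0,T_0/2]$ and equal to $1$ on $[T_0,T_\ast]$; the function $\eta\zeta_j u$ then solves a heat equation with vanishing initial slice and right-hand side perturbed only by the lower-order commutators $-2\eta\nabla\zeta_j\cdot\nabla u-\eta u\Delta\zeta_j+\eta'\zeta_j u$, while $\zeta_j\varphi$ solves $-\Delta(\zeta_j\varphi)=h_j$ with $h_j$ controlled below.

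\emph{Starting regularity and the elliptic step.} Since $2p>3$, Morrey gives $W^1_{2p}(\Omega)\hookrightarrow C^{0,\alpha}(\overline\Omega)$ with $\alpha=1-\tfrac3{2p}$, so $\sigma(u(t))$ is Hölder continuous, bounded between $\sigmalow$ and $\sigmaup$, with $[\sigma(u(t))]_{C^{0,\alpha}}\le\mathrm{Lip}(\sigma)\,[u(t)]_{C^{0,\alpha}}$. Writing the $\varphi$-equation in non-divergence form, $-\Delta\varphi=\tfrac{\sigma'(u)}{\sigma(u)}\nabla u\cdot\nabla\varphi$, one has $h_1=\zeta_1\tfrac{\sigma'(u)}{\sigma(u)}\nabla u\cdot\nabla\varphi+(\text{commutators})\in L^p(\Omega_1)$ for a.e.\ $t$, because $\nabla u,\nabla\varphi\in L^{2p}(\Omega)$; interior elliptic $L^s$-regularity for the \emph{constant-coefficient} operator $-\Delta$ (constant depending only on $p,\Omega_0,\Omega_1$) then gives $\varphi(t)\in W^2_p(\Omega_0)$. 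Iterating on shrinking subdomains and using $W^2_\rho\hookrightarrow W^1_{\rho^\ast}$, $\tfrac1{\rho^\ast}=\tfrac1\rho-\tfrac13$, raises the exponent of $\nabla\varphi(t)$; since $\tfrac1{2p}<\tfrac13$, after finitely many steps $\rho>3$, hence $\nabla\varphi(t)\in L^\infty_{\mathrm{loc}}$, indeed $\varphi(t)\in W^2_{2p,\mathrm{loc}}\hookrightarrow C^{1,\alpha}_{\mathrm{loc}}$.

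\emph{The parabolic step and closing the loop.} With $\nabla\varphi(t)$ locally bounded, $f(t)=\sigma(u)|\nabla\varphi|^2$ is locally bounded, hence in $L^s(\Omega_1)$ for every $s<\infty$; interior parabolic $L^s$-regularity applied to $\eta\zeta_1u$ yields $\tilde u\in L_r(T_0,T_\ast;W^2_s(\Omega_0))$ and $D_t\tilde u\in L_r(T_0,T_\ast;L^s(\Omega_0))$ for every $s<\infty$, so $u(t)\in C^{1,\beta}_{\mathrm{loc}}$ for a.e.\ $t$. Re-running the elliptic step with the now locally bounded $\nabla u$ promotes $\varphi(t)$ to $W^2_{s,\mathrm{loc}}$ for all $s$, giving the first assertion. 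If moreover $\sigma\in C^\infty$, the chain rule carries every gained derivative of $u(t)$ into $\sigma(u(t))$, so the elliptic/parabolic cycle no longer terminates: each pass gains two spatial derivatives, and $\bigcap_{k}W^k_s(\Omega_0)\hookrightarrow C^\infty(\Omega_0)$ (fixed $s>3$) yields $\tilde u,\tilde\varphi\in L_r(T_0,T_\ast;C^\infty(\Omega_0))$.

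\emph{Main obstacle.} The delicate point is keeping the Bochner exponent in time under control along the bootstrap: the source $f$ is \emph{quadratic} in $\nabla\varphi$, and each interior elliptic estimate for $\varphi$ a priori produces a factor of $\|u(t)\|_{W^1_{2p}}$ through the Hölder seminorm of the coefficient $\sigma(u(t))$, so a crude estimate degrades integrability at every turn. Two ingredients prevent this: first, working throughout with the constant-coefficient operators $-\Delta$ and $D_t-\Delta$ and moving the variable part to the right-hand side, so that no elliptic/parabolic constant depends on $\sigma(u)$; and second, the uniform-in-time bound $\sup_{[0,T_\ast]}\|u(t)\|_{W^1_{2p}(\Omega)}<\infty$, coming from $u\in C([0,T_\ast];W^1_{2p}(\Omega))$, which holds because the trace space $(L_p,D(\Delta_p))_{1-1/r,r}$ embeds into $W^1_{2p}(\Omega)$ under the standing hypothesis $r>\tfrac{4p}{2p-3}$; with these, only the $L_r$-in-time norms $\|\varphi(t)\|_{W^1_{2p}}$ and $\|g_\varphi(t)\|_{W^1_{2p}}$ enter, and linearly. (The same hypothesis, in the form $\tfrac3p+\tfrac4r<2$, also guarantees that the first parabolic step may be run from the given data.) The remaining items — legitimacy of the cut-off computations, and the indifference of interior regularity to the geometry of $\partial\Omega$ — are routine.
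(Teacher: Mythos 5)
Your argument is essentially the paper's proof: localize with smooth spatial and temporal cut-offs, rewrite the potential equation in non-divergence form so that only the constant-coefficient Laplacian and heat operator are inverted, alternate interior elliptic and maximal parabolic $L_p$-regularity with a Sobolev bootstrap on the integrability exponent, and for $\sigma\in C^\infty(\R)$ iterate on the order of derivatives via Leibniz's rule. The only immaterial differences are the ordering of the bootstrap --- you exhaust the elliptic gain using the fixed bound $\nabla u\in L_{2p}$ before the first parabolic pass, whereas the paper raises the exponents of $u$ and $\varphi$ in lockstep --- and your remark that the $L_r$-in-time norms enter only linearly, which glosses over the quadratic dependence of $\sigma(u)|\nabla\varphi|^2$ on $\|\nabla\varphi(t)\|$; the paper's own assertion that the localized right-hand sides lie in $L_r(T_i,T_\ast;L_p(\Omega_i))$ carries the same imprecision.
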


\begin{proof}
	Let $\Omega_\infty$ and $\{\Omega_i\}_{i=1}^\infty$ be smooth domains such that $\Omega_{i-1} \Subset \Omega_{i}$ and $\Omega_i \subset \Omega_\infty \Subset \Omega$, for $i=0,1,...$. Assume, without loss of generality, that the boundary data $g_u$ and $g_\varphi$ have smooth extensions to $\Omega_\infty$ such that $g_u, D_t g_u, g_\varphi \in L_r(0,T;C^\infty(\Omega_\infty))$. Without loss of generality, we also assume $p=6/(4-\epsilon)$ for some $\epsilon>0$.
	
	Let $\bar\zeta_i \in C^\infty(\Omega_i,[0,1])$, such that $\bar\zeta_i|_{\partial \Omega_i}=0$ and $\bar\zeta_i|_{\Omega_{i-1}}=1$. Furthermore, let $\{T_i\}_{i=1}^\infty$ and $T_0$ be positive numbers such that $T_i<T_{i-1}$, and $0< T_i < T_0 < T_\ast$ for all $i$. Define $\eta_i(t) \in C^\infty([T_i,T_\ast],[0,1])$ such that $\eta_i(T_i)=0$ and $\eta_i|_{[T_{i-1}, T_0]}=1$. Let $\zeta_i := \eta_i\bar\zeta_i$, then $(\zeta_i\tilde u,\zeta_i \tilde \varphi)$ satisfies the following system in $\Omega_i \times (T_i,T_\ast)$.
	\begin{subequations}\label{smooth_system}
		\begin{align}
		D_t (\zeta_i\tilde u) - \Delta (\zeta_i\tilde u) &= \zeta_i\sigma(u) |\nabla \varphi|^2 -\zeta_i(D_t g_u - \Delta g_u) + \tilde uD_t\zeta_i -2\nabla \zeta_i \cdot \nabla \tilde u \\&\quad- \tilde u \Delta \zeta_i,\notag \\
		\Delta (\zeta_i\tilde \varphi) &= \zeta_i\frac{\sigma'(u)}{\sigma(u)}\nabla u \cdot \nabla \varphi - \zeta_i\Delta g_\varphi - 2\nabla \zeta_i \cdot \nabla \tilde \varphi - \tilde \varphi \Delta \zeta_i,
		\end{align}
	\end{subequations}
	with homogeneous Dirichlet conditions and zero initial data. Note that we have used 
	\begin{align*}
	\nabla \cdot(\sigma(u)\nabla \varphi) = 0 \Leftrightarrow \Delta \varphi= \frac{\sigma'(u)}{\sigma(u)}\nabla u \cdot \nabla \varphi,
	\end{align*}
	in the second equation. Because of the assumed regularity in \eqref{u_phi_reg} and the smoothness of $\zeta_i,g_u,$ and $g_\varphi$, the right-hand sides in \eqref{smooth_system} are in $L_r(T_i,T_\ast;L_p(\Omega_i))$. 
	
	There exists a unique solution in $W^2_{p}(\Omega_i)$ to Poisson's equation with homogeneous Dirichlet boundary conditions, if the domain is smooth and the right-hand side in $L_p(\Omega_i)$, $1<p<\infty$, see e.g.~\cite[Theorem~9.15]{Gilbarg01}. We conclude that, for a fixed $t$, $\zeta_i(t) \tilde \varphi(t) \in W^2_p(\Omega_i)$. We may now use elliptic regularity in $L_p$, see \cite[Lemma 9.17]{Gilbarg01}, to deduce
	\begin{align*}
	\|\zeta_i \tilde \varphi\|_{W^2_{p}(\Omega_i)} \leq C\|\zeta_i\frac{\sigma'(u)}{\sigma(u)}\nabla u \cdot \nabla \varphi - \zeta_i\Delta g_\varphi - 2\nabla \zeta_i \cdot \nabla \tilde \varphi - \tilde \varphi \Delta \zeta_i\|_{L_p(\Omega_i)}.
	\end{align*} 
	The regularity in time of the right hand side implies $\zeta_i\tilde \varphi \in L_r(T_i,T_\ast;W^2_p(\Omega_{i}))$. Thus $\tilde \varphi \in L_r(T_{i-1},T_\ast;W^2_p(\Omega_{i-1}))$, since $\zeta_i = 1$ on $[T_{i-1},T_\ast]\times \Omega_{i-1}$. 
	 
	For the parabolic equation we use the theory for maximal $L_p$-regularity with homogeneous Dirichlet boundary conditions on smooth domains, see, e.g., \cite[Theorem 3.1]{Hieber97}. If the right-hand side is in $L_r(0,T;L_p(\Omega))$ and the initial data is zero, then the solution belongs to $L_r(0,T;D(\Delta_p))\cap W^1_r(0,T;L_p(\Omega))$. From the results on Poisson's equation we deduce $D(\Delta_p)\subset W^2_p(\Omega_i)$ and thus $\tilde u\in L_r(T_{i-1},T_\ast;W^2_{p}(\Omega_{i-1}))\cap W^1_r(T_{i-1},T_\ast;L_p(\Omega_{i-1}))$.
	 
	From the Sobolev inequality we have $W^2_{p}(\Omega_{i-1}) \subseteq W^1_{3p/(3-p)}(\Omega_{i-1})$. Using that $2p=12/(4-\epsilon)$ for some $\epsilon>0$ we get $3p/(3-p) = 12/(4-2\epsilon)$. Hence, we can substitute $\epsilon$ by $2\epsilon$, pass from $i$ to $i-1$ and repeat the argument. Note that if $12/(4-\epsilon)$ becomes negative, the right-hand side is in $L_\infty(\Omega_i)$. By induction $\tilde u, \tilde \varphi \in L_r(T_0,T_\ast;W^2_s(\Omega_0))$ for any $s\in(1,\infty)$.
	
	Now assume $\sigma \in C^\infty(\R)$. A solution to Poisson's equation on a smooth domain is in $W^{k+2}_p(\Omega_i)$ if the right-hand side is in $W^k_p(\Omega_i)$, see e.g.~\cite[Theorem~9.19]{Gilbarg01}. By applying Leibniz's rule it is clear that there is an $s'$ such that the right-hand sides in \eqref{smooth_system} belongs to $L_r(T_i,T_\ast;W^{k}_s(\Omega_i))$ if $\tilde \varphi, \tilde u \in L_r(T_i,T_\ast;W^{k+1}_{s'}(\Omega_i))$. Hence, we may perform induction over $k$ and pass from $i$ to $i-1$, to achieve $\tilde u, \tilde \varphi \in L_r(T_0,T_\ast; W^{k+2}_s(\Omega_0))$, for any $k,s >1$. This implies $\tilde u, \tilde \varphi \in L_r(T_0,T_\ast; C^\infty(\Omega_0))$.
\end{proof}


\section{Numerical Examples}\label{sec:examples}
In this section we consider four different examples. The first two are designed to test the convergence rates for different settings. In the first example we choose the domain and the data such that the exact solution is known. To achieve this we add a function $f(x,t)$ to the right-hand side in \eqref{weaku} and consider non-zero Neumann data for $\phi$, see Subsection~\ref{sec:example1} below. For the second example we consider a setting that does not fulfil the creased domain conditions. For this problem we expect low regularity and reduced convergence rates. Finally, in the last two examples we test a goal oriented adaptivity method.

In all cases we consider a continuous, piecewise affine finite element discretization. We let $\{\mathcal T_{m}\}_m$ denote a family of uniform triangulations of the domain such that $h_{m+1} = 2^{-1}h_m$, $h_0 \in \R$, where $h_m$ is the maximal mesh size on $\mathcal T_m$. With this notation we may define
\begin{align*}
V^u_m &:= \{v \in H^1(\Omega;\Gamma^u_D)\cap C^0(\bar \Omega): v|_K \ \text{is a polynomial of degree} \leq 1, \forall K \in \mathcal T_h \},\\
V^\varphi_m &:= \{v \in H^1(\Omega;\Gamma^\varphi_D)\cap C^0(\bar \Omega): v|_K \ \text{is a polynomial of degree} \leq 1, \forall K \in \mathcal T_h \}.
\end{align*} 
For the time discretization, we let $\tau_l = 2^{-l}T$ and the fully discrete space $X_{m,l}$ is defined as in Section~\ref{sec_discrete}. 

In the first two experiments we keep the time step proportional to the mesh size in each refinement. That is, we consider spaces of the form $X_{k,k}$, for $k=1,2,3,...$. This means that if the solution has sufficient regularity, then we expect at most linear convergence rate in the norm $L_2(0,T; H^1(\Omega))$, see also \cite{Elliott95,Stillfjord17,Gao14}.   

All computations are made using the FEniCS software \cite{FenicsBook}.

\subsection{Example 1}\label{sec:example1}
We let $T=0.1$, $\Omega$ be the unit cube, $\Gamma^u_D = \partial \Omega$, and $\Gamma^\varphi_D = \partial \Omega \setminus \{x_3=0 \text{ or } x_3=1\}$. To construct an example where the exact solution is known, we consider non-zero Neumann data $g_N$ for $\varphi$ and an additional function $f$ in the right-hand side of \eqref{weaku}. We get
\begin{align*}
\langle D_t u, v \rangle + \langle \nabla u, \nabla v \rangle &= -\langle \sigma(u)\lceil \tilde \varphi \rceil \nabla \varphi, \nabla v \rangle  +  \langle \sigma(u) \nabla \varphi \cdot \nabla g_\varphi, v\rangle + \langle f, v \rangle, \\
\langle \sigma(u) \nabla \varphi, \nabla w \rangle &= \langle g_N, w \rangle_{\Gamma^\varphi_N},
\end{align*} 
where $\langle \cdot, \cdot \rangle_{\Gamma^\varphi_N}$ denotes integration on the boundary $\Gamma^\varphi_N$.

Letting $g_u = t, g_\varphi = x_2, g_N = -1 + 2x_2$, $\sigma = 1$, and
\begin{align*}
f &= 2(x_1x_2(1-x_1)(1-x_2) + x_1x_3(1-x_1)(1-x_3) + x_2x_3(1-x_2)(1-x_3)),\\
u_0 &= x_1(1-x_1)x_2(1-x_2)x_3(1-x_3),
\end{align*}
the exact solution is given by $u = x_1(1-x_1)x_2(1-x_2)x_3(1-x_3) + t$ and $\varphi = x_2$. Note that $\varphi = \tilde \varphi + g_\varphi$ and thus $\tilde \varphi = 0$. In our setting, the approximations $\varphi_{m,l}$ are all close to zero and, hence, we omit to plot the error for $\varphi$ below.

We compute the finite element approximation on meshes with tetrahedra of maximal diameter $h=2^{-k}\sqrt{3}$ and time step size $\tau=2^{-k}T=2^{-k}0.1$ for $k=1,..,6$. With this refinement, the finest approximation ($k=6$) is computed on a mesh with 274625 nodes. The error in $L_2(0,T;H^1(\Omega))$ is approximated using Simpson's rule in time on each interval $I_n$ and the FEniCS function \verb|errornorm| in space. The relative error is depicted in Figure~\ref{fig:cube}. The convergence rate is approximately linear, which is expected for sufficiently regular problems.

\begin{figure}[h]
	\centering
	\includegraphics[width=0.7\textwidth]{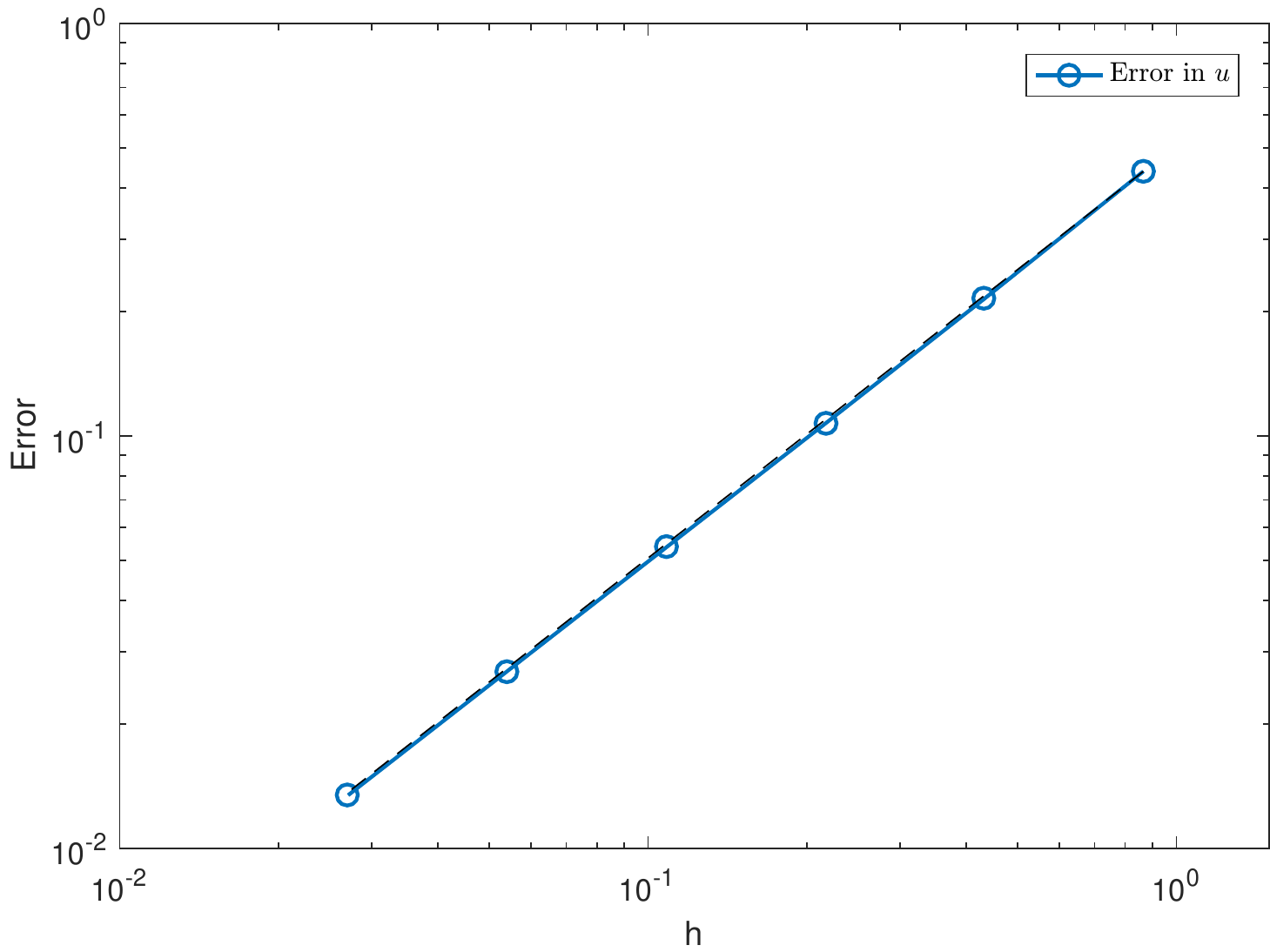}
	\caption{Relative errors for the temperature $u$ (blue o) of Example 1 plotted against the mesh size $h$. The dashed line is $Ch$.}\label{fig:cube}
\end{figure}

\subsection{Example 2}\label{sec:example2}
We let $T=0.1$ and $\Omega$ be the Fichera cube depicted in Figure~\ref{fig:fichera_domain} (left). We consider non-creased boundary conditions by imposing Dirichlet conditions on the striped areas, $\Gamma_0$ and $\Gamma_1$ in the figure (left), and homogeneous Neumann conditions on the remaining parts. On $\Gamma_0$ we set $g_u = 0$ and $g_\varphi = 10$ and on $\Gamma_1$ we set $g_u=g_\varphi=0$. Furthermore, we let $\sigma(u) = 2^{-1}(\pi - \arctan(u))$ and $u_0 = 0$. 

We compute the finite element approximation on meshes with tetrahedra of maximal diameter $h=2^{-(k-1)}\sqrt{2}$ and time step size $\tau=2^{-k}T=2^{-k}0.1$ for $k=1,..,5$. Since the exact solution is not known, the approximations are compared to a reference solution computed for $k=6$ corresponding to a mesh with 471233 nodes. The relative error in $L_2(0,T;H^1(\Omega))$-norm is plotted in Figure~\ref{fig:fichera}. We have convergence, but not with order one. This is due to the low regularity in the vicinity of the edges where the Dirichlet and Neumann boundaries meet with an angle greater than $\pi$, that is, where the creased domain condition fails. 

\begin{figure}[h]
	\centering
	\begin{subfigure}[b]{0.49\textwidth}
		\includegraphics[width=\textwidth]{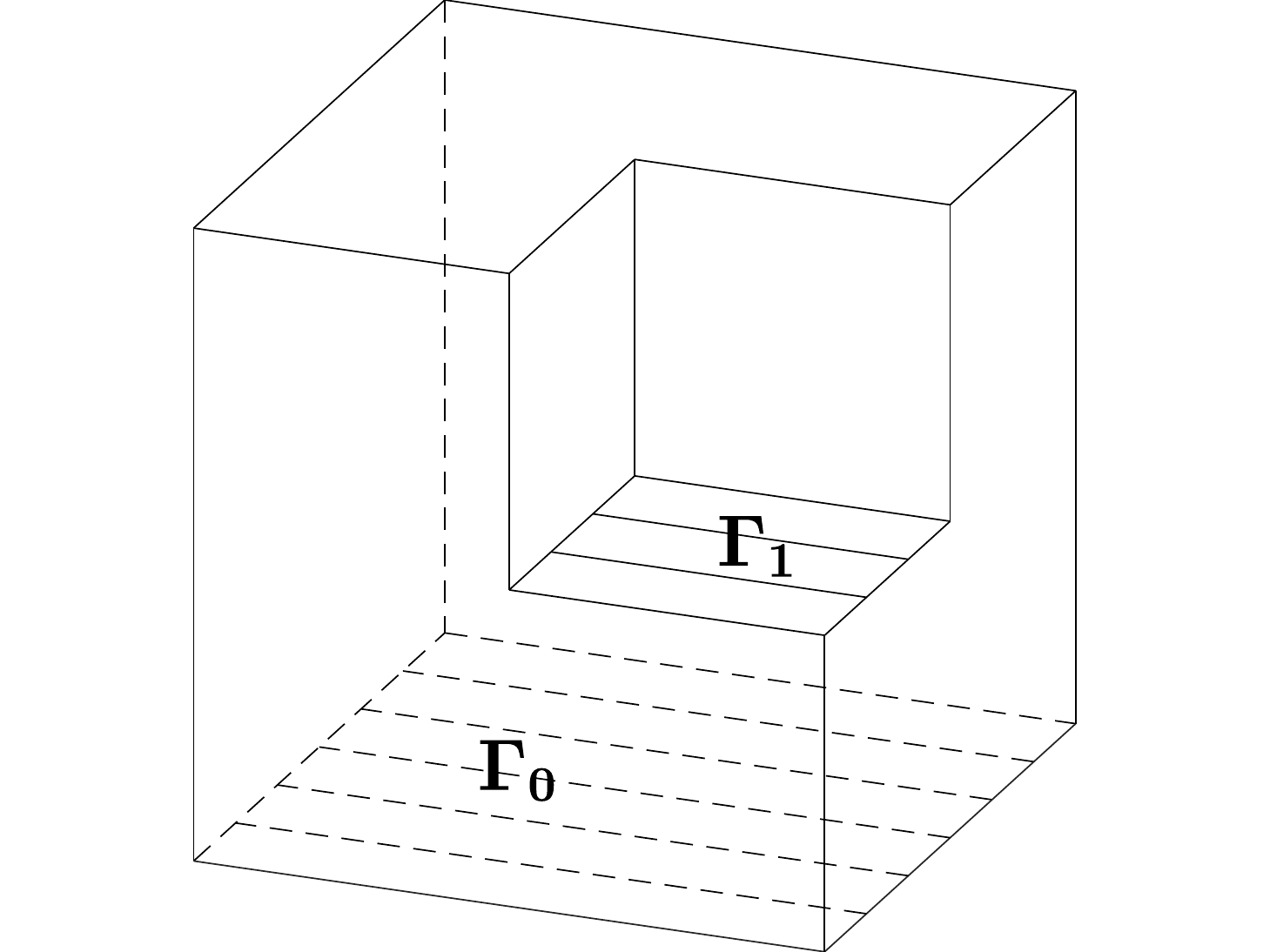}
		\caption{Non-creased}
	\end{subfigure}
	~ 
	\begin{subfigure}[b]{0.49\textwidth}
		\includegraphics[width=\textwidth]{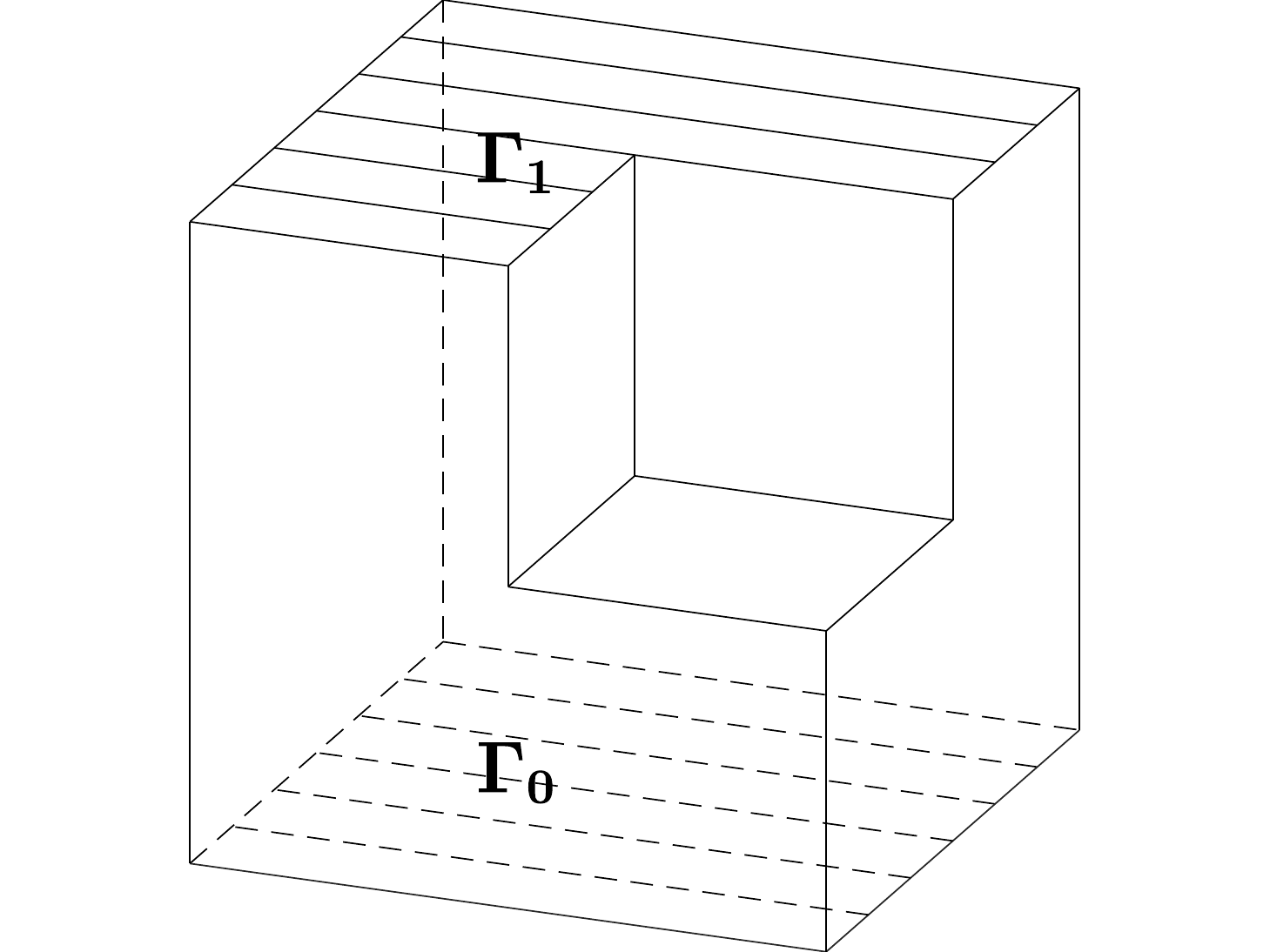}
		\caption{Creased}
	\end{subfigure}
	\caption{Two settings of the Fichera cube with centre in the origin. Dirichlet boundary conditions are imposed on the striped areas ($\Gamma_0$ and $\Gamma_1$).}\label{fig:fichera_domain}
\end{figure}
  
\begin{figure}[h]
	\centering
	\includegraphics[width=0.7\textwidth]{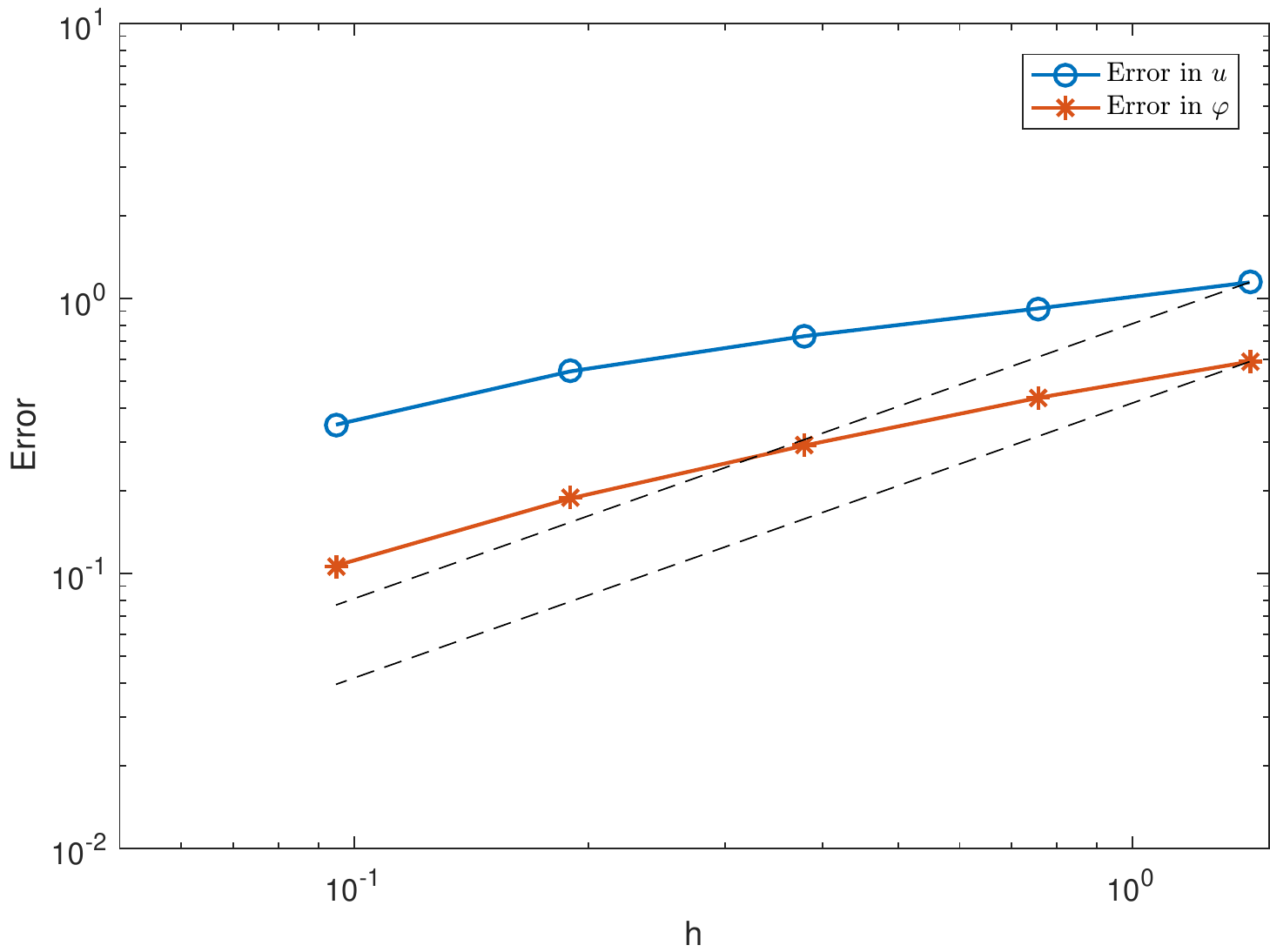}
	\caption{Relative errors for the temperature $u$ (blue $\circ$) and the potential $\varphi$ (red $\ast$) of Example 2 plotted against the mesh size $h$. The dashed line is $Ch$.}\label{fig:fichera}
\end{figure}

\subsection{Example 3}\label{sec:example3}
We continue in the setting of the Fichera cube as in Example~2, but with a choice of boundary conditions that fulfil the creased domain condition. We choose $\Gamma_0$ and $\Gamma_1$ as in Figure~\ref{fig:fichera_domain} (right), with $g_u = 0$ and $g_\varphi(x,t) = 2x_2(x_2+1)+5$ on both $\Gamma_0$ and $\Gamma_1$. 

The aim is to utilize the observation that the solution has higher regularity in the interior of the domain, see Theorem~\ref{thm_interior_reg}, and that the problem thus is suitable for $h$-adaptive finite elements. In this example we use a goal-oriented approach for the mesh refinement, which is supported for stationary problems in the FEniCS software, see \cite{Rognes2013}.

We summarize the goal-oriented procedure here, and refer to \cite{Rognes2013} and references therein for details. Consider a nonlinear variational problem; find $u \in V$ such that
\begin{align}\label{goal_stationary}
F(u,v) = 0, \quad \forall v \in \hat V,
\end{align}
and the corresponding finite element problem; find $u_h \in V_h$ such that
\begin{align}\label{goal_stationary_fem}
F(u_h,v) = 0, \quad \forall v \in \hat V_h,
\end{align}
for some triangulation $\mathcal T_h$ and appropriate finite element space $V_h \subset V$, $\hat V_h \subset \hat V$. Let $\M: V \to \R$ denote a linear goal functional and define the dual problem; find $z \in V^\ast$ such that 
\begin{align*}
\overline{F'}^\ast(z,v) = \M(v), \quad \forall v \in \hat V^\ast,
\end{align*}
where $\hat V^\ast = V_0 = \{v-w: v,w \in V\}$ and $V^\ast = \hat V$. The bilinear form $\overline{F'}^\ast$ denotes the following average of the Fr\'{e}chet  derivative $F'$ of $F$,
\begin{align*}
\overline{F'}(\cdot,\cdot) = \int^1_0 F'(su + (1-s)u_h;\cdot,\cdot) \intd s,
\end{align*}
and by the chain rule we have $\overline{F'}(u-u_h,\cdot) = F(u,\cdot) - F(u_h, \cdot)$. 

Using the definition of the dual problem we may now express the error in the goal functional as
\begin{align*}
\M(u)-\M(u_h) &= \M(u-u_h) = \overline{F'}^\ast(z,u-u_h) = \overline{F'}(u-u_h,z) \\&= F(u,z) - F(u_h, z) = - F(u_h, z) =:r(z),
\end{align*}
where $r(z)$ denotes the residual. The residual can be decomposed into local contributions from each cell $T \in \mathcal T_h$
\begin{align*}
r(v) = \sum_{T \in \mathcal T_h} r_T(v) = \sum_{T \in \mathcal T_h}\Bigg(\int_TR_Tv \intd x+ \int_{\partial T} R_{\partial T} v\intd s \Bigg),
\end{align*}
where $R_T$ and $R_{\partial T}$ are the cell and facet residuals. In \cite[Theorem 4.1]{Rognes2013} it is proved that the error indicators $R_T, R_{\partial T}$ can be determined by solving a set of local problems on each cell $T$. 

The procedure of computing the error indicators $R_T$ and $R_{\partial T}$ and refining the mesh accordingly is performed in FEniCS by using \verb|solve| together with the goal functional and a given tolerance. In our case, the fully discrete problem \eqref{full_weak} is a stationary problem of the form \eqref{goal_stationary} with 
\begin{align*}
F((u^n_{m,l},\varphi^n_{m,l}),(v,w)) &:= \langle\frac{ u^n_{m,l}-u^{n-1}_{m,l}}{\tau_l}, v \rangle + \langle \nabla u^n_{m,l}, \nabla v \rangle \\&\quad+\langle \sigma(u^n_{m,l})\lceil \tilde \varphi^n_{m,l} \rceil \nabla \varphi^n_{m,l}, \nabla v \rangle \\&\quad - \langle \sigma(u^n_{m,l}) \nabla \varphi^n_{m,l} \cdot \nabla g^n_\varphi, v\rangle + \langle \sigma(u^n_{m,l}) \nabla \varphi^n_{m,l}, \nabla w \rangle.
\end{align*} 
In each time step the error indicators are computed and the mesh refined. Note that the refined mesh is reused in the next time step and additionally refined if needed. 

In this example we choose $\M(u) = \int_{\Omega} u \intd x$. The initial data remains the same as in Example 2. We choose to have fixed (small) time step $\tau = 2^{-6} T$ in this experiment, since the the spatial error is the main concern here. The relative error in the goal functional for $h=2^{-4}\sqrt{2}$ compared to the reference solution, here denoted $u_\textrm{ref}$ and computed on mesh with 471233 nodes, is
\begin{align*}
 \frac{\displaystyle \max_{0 \leq n \leq N} \left| \M(u^n_h) - \M(u^n_\textrm{ref}) \right|}{\displaystyle \max_{0 \leq n \leq N}\left| \M(u^n_\textrm{ref}) \right|} \approx 0.0254. 
\end{align*} 
We note that our uniform mesh of size $h=2^{-4}\sqrt{2}$ corresponds to $7985$ nodes. Using the goal oriented adaptivity, denoted $u_\textrm{ad}$ below, we get  
\begin{align*}
 \frac{\displaystyle \max_{0 \leq n \leq N} \left| \M(u^n_\textrm{ad}) - \M(u^n_\textrm{ref}) \right|}{\displaystyle \max_{0 \leq n \leq N}\left| \M(u^n_\textrm{ref}) \right|} \approx 0.0282, 
\end{align*}
already for $1628$ nodes. 

This example indicates that the problem is suitable for $h$-adaptive finite elements and motivates a further analysis of a posteriori methods for the Joule heating problem, which will be considered in later works.

\subsection{Example 4}\label{sec:example4}
In this example, we use the non-creased Fichera cube as in Example~2, see Figure~\ref{fig:fichera_domain} (left). The aim is to investigate the use of goal-oriented adaptivity for non-creased domains. We emphasize that, in this setting, Theorem~\ref{thm_interior_reg} is not directly applicable.

As in Example~3 we choose $\M(u) = \int_{\Omega} u \intd x$. The initial and boundary data remain the same as in Example 2 and the time step is $\tau = 2^{-6} T$. The error in the goal functional for $h=2^{-5}\sqrt{2}$ compared to the reference solution is
\begin{align*}
\frac{\displaystyle \max_{0 \leq n \leq N} \left| \M(u^n_h) - \M(u^n_\textrm{ref}) \right|}{\displaystyle \max_{0 \leq n \leq N}\left| \M(u^n_\textrm{ref}) \right|} \approx 0.0271. 
\end{align*}  
Here $h=2^{-5}\sqrt{2}$ corresponds to $60513$ nodes. For the goal oriented adaptivity we get  
\begin{align*}
\frac{\displaystyle \max_{0 \leq n \leq N} \left| \M(u^n_\textrm{ad}) - \M(u^n_\textrm{ref}) \right|}{\displaystyle \max_{0 \leq n \leq N}\left| \M(u^n_\textrm{ref}) \right|} \approx 0.0254, 
\end{align*}
for $6560$ nodes.


This example indicates that the goal oriented adaptivity is applicable also in non-creased domain settings. However, it is still an open problem to show that the solution to such a problem enjoys the appropriate regularity to be suitable for $h$-adaptivity.



\subsection*{Acknowledgement}
The authors acknowledge the hospitality of the Hausdorff Research Institute for Mathematics in Bonn, where parts of this paper were written.

\bibliographystyle{abbrv}
\bibliography{jouleheating_ref}

\end{document}